\newtheorem{thm}{Theorem}[section]
\newtheorem{cor}[thm]{Corollary}
\newtheorem{lemma}[thm]{Lemma}
\newtheorem{prop}[thm]{Proposition}
\newtheorem{rem}[thm]{Remark}
\newcommand{\BQ}{\mathbb{Q}}
\theoremstyle{definition}
\newtheorem{remark}[thm]{Remark}
\def\qed{{\hspace{2mm}{\small $\diamondsuit$}}}
\newcommand{\BZ}{\mathbb{Z}}
\newcommand{\tr}{\operatorname{tr}}
\def\fX{\mathfrak X}
\def\fR{\mathfrak R}
\def\fA{\mathfrak A}
\def\fB{\mathfrak B}
\def\ve{\varepsilon}
\newtheoremstyle{cases}
  {12pt plus 6 pt}%       Space above
  {2pt}%       Space below
  {\bfseries}   %       Body font
  {}%          Indent amount (empty = no indent, \parindent = para indent)
  {\bfseries}% Thm head font
  {.}%         Punctuation after thm head
  {.5em}%      Space after thm head: " " = normal interword space;
\theoremstyle{cases}
\numberwithin{subcase}{case} \numberwithin{subsubcase}{subcase}
\numberwithin{equation}{subsection}
\def\sfrac#1#2{\kern.1em\raise.5ex\hbox{$#1$}
    \kern-.1em/\kern-.05em\lower.25ex\hbox{$#2$}}
\def\T{{\mathcal T}}
\def\wT{\widetilde{\T}}
\newcommand{\BC}{\mathbb{C}}
\def\BQ{{\mathbb Q}}
\def\be{  \begin{equation} }
\def\ee{  \end{equation} }
\def\sfrac#1#2{\kern.1em\raise.5ex\hbox{$#1$}
        \kern-.1em/\kern-.05em\lower.25ex\hbox{$#2$}}
\def\T{{\mathcal T}}
\def\G{{\Gamma}}
 \def\d{{\delta}}
 \def\e{{\epsilon}}
 \def\l{{\lambda}}
 \def\m{{\mu}}
 \def\o{{\omega}}
   \def\s{{\sigma}}
 \def\a{{\alpha}}
 \def\p{{\partial}}
 \def\r{{\rho}}
 \def\ra{{\rightarrow}}
 \def\lra{{\longrightarrow}}
 \def\g{{\gamma}}
 \def\c{{\mathbb C}}
 \def\z{{\mathbb Z}}
 \def\2{{\mathbb Z_2}}
 \def\q{{\mathbb Q}}
 \def\t{{\tau}}
 \def\sl2{{SL(2,\mathbb C)}}
 \def\qed{{\hspace{2mm}{\small $\diamondsuit$}}}
 \def\pf{{\noindent{\bf Proof.\hspace{2mm}}}}
 \def\sk{{{\mbox{\tiny K}}}}
 \def\sw{{{\mbox{\tiny W}}}}
 \def\su{{{\mbox{\tiny $U$}}}}
\def\scr{{{\mbox{\footnotesize  $\chi_\r$}}}}
\def\scrp{{{\mbox{\footnotesize  $\overline \chi_{\overline\r}$}}}}
\def\sckp{{{\mbox{\footnotesize  $\overline\chi_k'$}}}}
\def\sckpp{{{\mbox{\footnotesize  $\overline\chi_k''$}}}}
\def\sc{{{\mbox{\footnotesize  $\overline \chi$}}}}
\def\sz{{{\mbox{\footnotesize $\chi_0$}}}}
\def\szj{{{\mbox{\footnotesize $\chi_i$}}}}
\def\scz{{{\mbox{\footnotesize $\overline \chi_0$}}}}
\def\sck{{{\mbox{\footnotesize $\overline \chi_k$}}}}
\def\sce{{{\mbox{\footnotesize $\chi_{\e^*(\r)}$}}}}
\def\ak{{{\mbox{$A_\sk(\sm,\sl)$}}}}
\def\A{{\mathcal A}}
\def\T{{\mathcal T}}
\def\wT{\widetilde{\T}}
\def\P{{\mathcal P}}
\def\wA{\widetilde{\A}}
\def\sm{{{\mbox{\footnotesize  $\mathfrak{M}$}}}}
  \def\sl{{{\mbox{\footnotesize  $\mathfrak{L}$}}}}
 \def\fm{{\mathfrak{m}}}
\def\ft{{\mathfrak{t}}}
\def\fp{{\mathfrak{p}}}
\def\fs{{\mathfrak{s}}}
\def\spec{\mathrm{Spec}}
\begin{document}

\title[Character varieties, AJ Conjecture]{Character varieties, A-polynomials, and the AJ Conjecture}

\author[Thang  T. Q. L\^e]{Thang  T. Q. L\^e}
\address{School of Mathematics, 686 Cherry Street,
 Georgia Tech, Atlanta, GA 30332, USA}
\email{letu@math.gatech.edu}
\author{Xingru Zhang}
\address{Department of Mathematics,
 University at Buffalo
Buffalo, NY 14260-2900, USA}
\email{xinzhang@buffalo.edu}

\thanks{T. L. is supported in part by National Science Foundation grant DMS-1406419. \\
2010 {\em Mathematics Classification:} Primary 57M25.\\
{\em Key words and phrases: character variety, $A$-polynomial, AJ conjecture.}}

\begin{abstract}
We establish some facts about the behavior of the rational-geometric subvariety of the $SL_2(\c)$ or $PSL_2(\c)$ character variety of
a hyperbolic knot manifold under the restriction map to
the  $SL_2(\c)$ or $PSL_2(\c)$ character variety of the boundary torus,
 and use the results to  get some properties about the A-polynomials
 and  to  prove the AJ conjecture for certain class of knots in $S^3$ including in particular
 any $2$-bridge knot over which the double branched cover of $S^3$ is a lens space of prime order.
\end{abstract}

\maketitle
\vspace{-.6cm}
\begin{center}

%\today
\end{center}

\section{Introduction}\label{intro}

For a finitely generated group $\G$, let $R(\G)$ denote the $SL_2(\c)$-representation variety of $\G$,
$X(\G)$ the $SL_2(\c)$-character variety of $\G$,
and $\tr: R(\G)\ra X(\G)$ the  map which sends a representation
$\r\in R(\G)$ to its character $\scr\in X(\G)$.
When $\G$ is the fundamental group of a connected manifold $W$, we
also write $R(W)$, $X(W)$ for $R(\pi_1(W))$, $X(\pi_1(W))$ respectively
and call them the $SL_2(\c)$-representation variety of $W$ and the $SL_2(\c)$-character variety of $W$.
The counterparts of these notions when the target group
$SL_2(\c)$ is replaced by $PSL_2(\c)$ are similarly defined and are
denoted by  $\overline R(\G)$, $\overline X(\G)$, $\overline \tr$,
$\overline \r$, $\scrp$,
$\overline R(W)$, $\overline X(W)$ respectively. We refer to \cite{CS} for basics
about $SL_2(\c)$-representation and character varieties and to \cite{BZ} in $PSL_2(\c)$ case.

In this paper, a {\em variety} $V$ is a closed complex affine algebraic set, i.e. a subset of $\BC^n$ which is the zero locus of a set of polynomials in $\BC[x_1,\dots,x_n]$.  If among the sets of polynomials which define
the same variety $V$ there is one whose elements all
have rational coefficients, we say that $V$ is {\em defined over $\BQ$}. Similarly
a regular map between two  varieties
is said to be {\em defined over $\mathbb Q$}  if the map is given by
a tuple of polynomials with coefficients in $\mathbb Q$.
Note that $R(\G)$, $X(\G)$, $\tr$,
$\overline R(\G)$, $\overline X(\G)$, $\overline \tr$
  are all defined over $\mathbb Q$.

In this paper irreducible varieties will be called $\c$-irreducible varieties.
Recall that a variety is {\em $\BC$-irreducible} if
it is not a union of two
proper subvarieties. Any variety $V$ can be presented as an irredundant union of $\BC$-irreducible subvarieties, each is called a {\em $\BC$-component} of $V$.
Similarly, a variety  defined over $\BQ$ is {\em $\BQ$-irreducible} if
it is not a union of two
proper subvarieties defined over $\BQ$. Any variety $V$ defined over $\BQ$ can be presented as an irredundant union of $\BQ$-irreducible subvarieties, each is called a {\em $\BQ$-component} of $V$.
In general a $\BQ$-component can be further decomposed into $\BC$-components.

If $\G_1$ and $\G_2$ are two finitely generated groups
and $h:\G_1\ra \G_2$ is a group homomorphism, we use
$h^*$ to denote the induced regular map from
 $R(\G_2)$, $X(\G_2)$, $\overline R(\G_2)$, or $\overline X(\G_2)$
 to $R(\G_1)$, $X(\G_1)$, $\overline R(\G_1)$ or $\overline X(\G_1)$
 respectively.
 Note that $h^*$ is defined over $\mathbb Q$.

Let $M$ be a knot manifold, i.e. $M$ is a connected compact orientable $3$-manifold
whose boundary $\p M$  is a torus.
Let $\iota^*$ be the regular map from
$R(M)$, $X(M)$, $\overline R(M)$ or $\overline X(M)$ to
$R(\p M)$, $X(\p M)$, $\overline R(\p M)$ or $\overline X(\p M)$
respectively, induced from the inclusion induced homomorphism
$\iota: \pi_1(\p M)\ra \pi_1(M)$.

We call a character $\scr$ (or $\scrp$)   reducible or irreducible or
discrete faithful or dihedral if the  corresponding
representation $\r$ (or $\overline \r$) has that property.

\def\rg{\mathrm {rg}}
\def\oX{\overline X}

\subsection{Rational-geometric subvariety}
Suppose $M$ is a  hyperbolic knot manifold, i.e.
 a knot manifold whose  interior has a complete
hyperbolic metric of finite volume. There are
precisely two discrete faithful characters in $\oX(M)$ (which follows from the Mostow-Prasad rigidity)
and there are precisely $2|H_1(M;\z_2)|$ discrete faithful characters in $X(M)$
(which follows from  a result of Thurston \cite[Proposition 3.1.1]{CS}).
The {\em rational-geometric subvariety}
$X^{\rg}(M)$ (respectively  $\overline X^{\rg}(M)$) is the union of $\q$-components of $X(M)$
 (respectively  $\overline X(M)$)  each of which
contains a  discrete faithful character.
 The number of $\q$-components  of $X^{\rg}(M)$ is at most $|H_1(M;\z_2)|$,
 and $\overline X^{\rg}(M )$ is $\q$-irreducible (which will be explained in
Section  \ref{sec:pf}), but it is not known how many $\c$-components    that $X^{\rg}(M)$ (respectively  $\overline X^{\rg}(M)$) can possibly
have.

In this paper we show

\begin{thm}\label{distinct}
Let $M$ be a hyperbolic knot manifold.
Let $\overline X_1,..., \overline X_l$ be the  $\BC$-components of $\overline X^{\rg}(M)$, and let
 $\overline Y_j$ be the Zariski closure of $\iota^*(\overline X_j)$ in $\overline X(\p M)$, $j=1,...,l$.
\newline
(1) For each $j$,  $\overline X_j$ is a curve.
\newline
(2) The regular map  $\iota^*: \overline X_j\ra \overline Y_j$ is a birational isomorphism for each $j=1,...,l$.
\newline
(3) If the two discrete faithful characters of
$\overline X(M)$ are contained in the same $\BC$-component   of $\overline X(M)$,  then the curves $\overline Y_j$, $j=1,...,l$,  are mutually distinct in $\overline X(\p M)$.
\end{thm}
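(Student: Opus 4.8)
The plan is to turn the assertion into a statement in Galois theory and then break the symmetry using the two discrete faithful characters together with Mostow--Prasad rigidity. Since $\overline X^{\rg}(M)$ is $\BQ$-irreducible (as noted in the excerpt), its $\BC$-components $\overline X_1,\dots,\overline X_l$ form a single orbit under $\Gal(\overline\BQ/\BQ)$, equivalently under $\mathrm{Aut}(\BC/\BQ)$; and because $\iota^*$ is defined over $\BQ$ it is equivariant for this action, so that $\overline Y_j=\sigma_j(\overline Y_1)$ whenever $\sigma_j(\overline X_1)=\overline X_j$. Writing $H=\mathrm{Stab}(\overline X_1)$ and $K=\mathrm{Stab}(\overline Y_1)$ for the respective stabilizers, one always has $H\subseteq K$, and the curves $\overline Y_1,\dots,\overline Y_l$ are mutually distinct if and only if $K=H$. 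Thus the entire content is to show that every $\sigma$ with $\sigma(\overline Y_1)=\overline Y_1$ already fixes $\overline X_1$.

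Next I would isolate the geometric input carried by the two discrete faithful characters. They are complex conjugate, say $\chi_0$ and $\overline{\chi_0}$, and each restricts on $\partial M$ to the peripheral parabolic character $P\in\overline X(\partial M)$; since $P$ has rational trace coordinates it is fixed by the whole Galois action, so $P\in\overline Y_j$ for every $j$. By parts (1) and (2) each $\overline X_j$ is a curve and $\iota^*$ identifies its smooth model with that of $\overline Y_j$, so the local branches of $\overline Y_1$ at $P$ correspond to the points of $\overline X_1$ lying over $P$; in particular $\chi_0$ and $\overline{\chi_0}$ determine two branches $b_0$ and $\overline{b_0}$ of $\overline Y_1$ at $P$, whose tangent directions are the cusp shape $\tau$ and its conjugate $\overline{\tau}$ (Neumann--Zagier). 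Hypothesis (3) says precisely that both distinguished branches lie on the \emph{single} curve $\overline Y_1$, i.e. that $\overline X_1$ is stable under complex conjugation $c$; since $\iota^*$ is defined over $\BQ\subset\mathbb{R}$ it follows that $\overline Y_1$ is defined over $\mathbb{R}$, that is $c\in H\cap K$.

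The strategy for the inclusion $K\subseteq H$ is then as follows. Given $\sigma\in K$, put $\overline X_j=\sigma(\overline X_1)$; then $\sigma$ fixes $\overline Y_1$ and fixes $P$, hence permutes the branches of $\overline Y_1$ at $P$, carrying $b_0$ and $\overline{b_0}$ to branches with tangents $\sigma(\tau)$ and $\sigma(\overline{\tau})$. I would argue that the branches of $\overline Y_1$ at $P$ arising from discrete faithful characters are permuted among themselves by $\sigma$: by Mostow--Prasad rigidity the complete structure $\chi_0$ is the unique discrete faithful character realizing its peripheral data, and it is a smooth point of $\overline X(M)$, hence lies on a unique $\BC$-component. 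Consequently $\sigma(\tau)\in\{\tau,\overline{\tau}\}$, so after composing with $c$ if necessary the element $\sigma$ fixes the branch $b_0$ and therefore fixes its discrete faithful lift $\chi_0$; smoothness then forces $\sigma(\overline X_1)=\overline X_1$ or $c\sigma(\overline X_1)=\overline X_1$, giving $K\subseteq\langle H,c\rangle$. Under hypothesis (3) we have $c\in H$, whence $K=H$ and the curves $\overline Y_j$ are mutually distinct.

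The principal obstacle is the local analysis of $\iota^*$ over the parabolic point $P$ hidden in the previous paragraph: one must control the fiber $\iota^{*-1}(P)\cap\overline X_1$ and show that the branches of $\overline Y_1$ at $P$ coming from discrete faithful characters are exactly $b_0,\overline{b_0}$ and are permuted among themselves by any $\sigma\in K$ --- equivalently, that a second $\BC$-component mapping onto $\overline Y_1$ would be forced to carry a discrete faithful character, of which hypothesis (3) leaves none to spare once $\chi_0$ and $\overline{\chi_0}$ are both accounted for on $\overline X_1$. This is where Mostow--Prasad rigidity and the smoothness of $\overline X(M)$ at the discrete faithful character must be combined with the birationality of part (2), and it is the step I expect to require genuine work; once it is in place the passage to $K=H$, and hence mutual distinctness, follows formally from the Galois setup and the relation $c\in H$ supplied by hypothesis (3).
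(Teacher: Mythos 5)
There is a genuine gap, and you have in fact flagged it yourself: the entire content of part (3) is the statement that $\overline Y_j\neq\overline Y_1$ for $j\geq 2$ (Proposition 2.2 in the paper), and your proposal reduces everything to this claim without proving it. Your Galois-theoretic packaging (components as an $\mathrm{Aut}(\mathbb{C})$-orbit, equivariance of $\iota^*$, stabilizers $H\subseteq K$, distinctness $\Leftrightarrow$ $K=H$) is correct and is essentially the paper's closing reduction, which uses an automorphism carrying $\overline X_{j_1}$ to $\overline X_1$ to deduce mutual distinctness of all the $\overline Y_j$ from the single case $\overline Y_j\neq\overline Y_1$. But the proposed route to that single case --- a local analysis of branches of $\overline Y_1$ at the boundary-parabolic point $P$ via cusp shapes and Neumann--Zagier --- cannot be closed by Mostow--Prasad rigidity and smoothness alone. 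A Galois conjugate $\sigma(\chi_0)$ of the discrete faithful character is in general \emph{not} discrete faithful, so there is no reason the branch of $\overline Y_1=\overline Y_j$ at $P$ contributed by $\overline X_j$ should have tangent in $\{\tau,\overline\tau\}$; indeed $P$ need not even lie in $\iota^*(\overline X_j)$, since $\iota^*$ is only dominant onto $\overline Y_j$. The claim that ``a second component mapping onto $\overline Y_1$ would be forced to carry a discrete faithful character'' is precisely what must be proved, and Mostow--Prasad rigidity says nothing about non-faithful characters sharing peripheral data with the complete structure.

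The missing idea is the volume of representations and its rigidity. The paper shows that the volume function $v$ on each $\overline X_j$ factors through the normalization of $\overline Y_j$, because its differential is the pullback of a fixed $1$-form on $\overline X(\partial M)$; hence if $\overline Y_j=\overline Y_1$ then $v|_{\overline X_j}$ and $v|_{\overline X_1}$ agree up to sign and an additive constant when read through $\iota^*$. Feeding in a sequence of hyperbolic Dehn fillings $M(\gamma_k)$ whose discrete faithful characters $\overline\chi_k,\tau(\overline\chi_k)$ converge to $\chi_0,\tau(\chi_0)$ in $\overline X_1$, one produces characters on $\overline X_j$ that factor through $\pi_1(M(\gamma_k))$ and whose volumes sum (in absolute value) to at least $2\,\mathrm{vol}(M(\gamma_k))$, contradicting the Gromov--Thurston--Goldman volume rigidity for closed hyperbolic manifolds because those characters are not discrete faithful. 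Some substitute for this global volume input (e.g.\ Francaviglia's volume rigidity for cusped manifolds) is indispensable; without it your argument establishes only the formal framework, not the theorem.
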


In $SL_2(\c)$-setting
  we have a similar result but we need some restriction on the knot manifold.

\begin{thm}\label{distinct3}
Suppose that $M$ is a hyperbolic knot manifold which is the exterior of a knot
in a homology $3$-sphere.
Let $X_1,...,X_k$ be the  $\c$-components of $X^{\rg}(M)$, and let
 $Y_j$ be the Zariski closure of $\iota^*(X_j)$ in $X(\p M)$, $j=1,...,k$.
\newline
(1) For each $j$,  $X_j$ is a curve.
\newline
(2) The regular map  $\iota^*: X_j\ra Y_j$ is a birational isomorphism for each $j=1,...,k$.
\newline
(3) If   the two discrete faithful characters of
$\overline X(M)$ are contained in the same $\BC$-component of $\overline X(M)$,  then the curves $Y_j$, $j=1,...,k$,  are mutually distinct in $X(\p M)$.
\end{thm}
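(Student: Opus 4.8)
The plan is to deduce Theorem \ref{distinct3} from its $PSL_2(\c)$ counterpart, Theorem \ref{distinct}, via the two-fold lifting correspondence coming from the projection $p\colon SL_2(\c)\to PSL_2(\c)$; the hypothesis that $M$ is a knot exterior in a homology $3$-sphere enters only through this correspondence. That hypothesis gives $H_1(M;\z)=\z$, generated by the meridian $\mu$ with the longitude $\lambda$ nullhomologous, whence $H^1(M;\z_2)=\z_2$ and $H^2(M;\z_2)=0$. The vanishing of $H^2(M;\z_2)$ means every $PSL_2(\c)$-representation of $\pi_1(M)$ lifts to $SL_2(\c)$, so the induced $\q$-morphism $p_{\ast}\colon X(M)\to\oX(M)$ is surjective; and $H^1(M;\z_2)=\z_2=\{1,\epsilon_0\}$ shows that two lifts of a representation differ by the sign character $\epsilon_0\colon\pi_1(M)\to\{\pm1\}$ with $\epsilon_0(\mu)=-1$ and $\epsilon_0(\lambda)=1$. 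Hence over an irreducible character the fiber of $p_{\ast}$ is exactly the $\z_2$-orbit $\{\chi,\epsilon_0\otimes\chi\}$, so $p_{\ast}$ is generically finite. Finally $p_{\ast}$ is compatible with boundary restriction: writing $p_{\ast}$ also for the induced map $X(\p M)\to\oX(\p M)$, one has $p_{\ast}\circ\iota^{*}=\iota^{*}\circ p_{\ast}$.

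For (1) and (2), fix a $\c$-component $X_j$ of $X^{\rg}(M)$. Generic characters on $X_j$ are irreducible: the reducible locus is $\q$-defined, so being entirely reducible is Galois invariant; the $\c$-component carrying a discrete faithful character is not entirely reducible, and every $\c$-component of $X^{\rg}(M)$ is a Galois conjugate of such a component. The image closure $p_{\ast}(X_j)$ is irreducible and lies in $\oX^{\rg}(M)$, hence in a single $\c$-component $\oX_{\sigma(j)}$, which is a curve by Theorem \ref{distinct}(1); as $p_{\ast}|_{X_j}$ has finite fibers, $\dim X_j\le1$. Conversely each $\q$-component of $X^{\rg}(M)$ contains a discrete faithful character at which the local dimension of $X(M)$ is at least the number of cusps, $1$, by Thurston's deformation theorem \cite{CS}; since the $\c$-components of a $\q$-component are equidimensional Galois conjugates, $\dim X_j\ge1$. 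Thus each $X_j$ is a curve, proving (1). For (2), since $\tr_\mu=\pm2\neq0$ at the discrete faithful character and $X_j$ is a curve, $\tr_\mu$ is nonconstant on $X_j$, so $Y_j$ is a curve and $\iota^{*}\colon X_j\to Y_j$ is dominant. If $\chi\in X_j$ is generic and $\chi'\in X_j$ satisfies $\iota^{*}\chi=\iota^{*}\chi'$, then applying $p_{\ast}$ gives $\iota^{*}(p_{\ast}\chi)=\iota^{*}(p_{\ast}\chi')$ with both points in $\oX_{\sigma(j)}$; birationality of $\iota^{*}\colon\oX_{\sigma(j)}\to\overline Y_{\sigma(j)}$ (Theorem \ref{distinct}(2)) forces $p_{\ast}\chi=p_{\ast}\chi'$, so $\chi'\in\{\chi,\epsilon_0\otimes\chi\}$. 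But $\iota^{*}(\epsilon_0\otimes\chi)$ has meridian trace $-\tr_\mu(\chi)\neq\tr_\mu(\chi)$ for generic $\chi$, so $\chi'=\chi$ and $\iota^{*}\colon X_j\to Y_j$ is a birational isomorphism.

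For (3), it suffices to prove that $\iota^{*}\colon X^{\rg}(M)\to X(\p M)$ is generically injective: if so, then for $j\neq j'$ the generic point of a common curve $Y_j=Y_{j'}$ would have two distinct preimages, one in each of $X_j, X_{j'}$ by the birationality in (2), a contradiction. Under the hypothesis of (3), Theorem \ref{distinct}(3) gives that the curves $\overline Y_i$ are mutually distinct, which together with Theorem \ref{distinct}(2) shows that $\iota^{*}\colon\oX^{\rg}(M)\to\oX(\p M)$ is generically injective. Now let $\chi\in X^{\rg}(M)$ be generic, say $\chi\in X_1$, and let $\chi'\in X^{\rg}(M)$ satisfy $\iota^{*}\chi=\iota^{*}\chi'$. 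Applying $p_{\ast}$ and using generic injectivity downstairs forces $p_{\ast}\chi=p_{\ast}\chi'$, so once more $\chi'\in\{\chi,\epsilon_0\otimes\chi\}$, and the trace-sign computation of (2) rules out $\chi'=\epsilon_0\otimes\chi$. Hence $\chi=\chi'$, and (3) follows.

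The main obstacle is not an isolated computation but the bookkeeping that underpins this reduction: one must verify that $p_{\ast}$ is surjective with fibers over irreducible characters equal to the $\z_2$-orbit $\{\chi,\epsilon_0\otimes\chi\}$, that the lift of a discrete faithful representation is again discrete and faithful and therefore lands in $X^{\rg}(M)$, and the genericity statements---generic irreducibility on each component and $\tr_\mu\not\equiv0$---that validate the trace-sign step. Every one of these rests on the homology-$3$-sphere hypothesis through $H^2(M;\z_2)=0$ and $H^1(M;\z_2)=\z_2$ with $\epsilon_0(\mu)=-1,\ \epsilon_0(\lambda)=1$. This is precisely the input unavailable for a general hyperbolic knot manifold, which is why Theorem \ref{distinct3} carries a restriction on $M$ that Theorem \ref{distinct} does not.
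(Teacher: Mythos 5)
Your proposal is correct and follows essentially the same route as the paper: both deduce the theorem from Theorem \ref{distinct} by passing to the degree-two quotient maps $\Phi^*:X(M)\to \overline X(M)$ and $\Phi^*_1:X(\partial M)\to \overline X(\partial M)$ (surjective because $H_1(M;\z_2)=\z_2$ forces every $PSL_2(\c)$-representation to lift), identifying the fibers over irreducible characters with the $\e^*$-orbits $\{\chi,\e^*(\chi)\}$, and chasing the commutative diagram relating the two restriction maps --- your meridian-trace sign computation is exactly the paper's equivariance identity $\e_1^*\circ\iota^*=\iota^*\circ\e^*$ in explicit form. The one wobbly spot is your justification that the meridian trace is nonconstant on $X_j$ (nonvanishing at a single point does not imply nonconstancy, and for $j\neq 1$ the component need not contain a discrete faithful character, so one should invoke Galois conjugation); however, all your argument actually requires is that this trace is not identically zero on $X_j$ (which your evaluation at the discrete faithful character plus Galois conjugation does give) together with $Y_j$ being a curve, which already follows from your diagram since $\Phi^*_1(\iota^*(X_j))$ is dense in the curve $\overline Y_{\sigma(j)}$.
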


\begin{remark} Although the condition ``the two discrete faithful characters of
$\overline X(M)$ are contained in the same $\BC$-component of $\overline X(M)$" is hard to check, there is no known example of a hyperbolic knot exterior in $S^3$ for which this condition is not satisfied.
\end{remark}

We give two applications of Theorem \ref{distinct3}, one on
estimating degrees of A-polynomials and one on proving  the AJ conjecture  for a certain class of knots, which is the main motivation of this paper.

\subsection{$A$-polynomial}
When a knot manifold $M$ is the exterior of a knot $K$
in a homology $3$-sphere $W$, we denote the $A$-polynomial of $K$  in variables $\sm$ and $\sl$
by $A_{\sk, \sw}(\sm,\sl)$,  as defined in \cite{CCGLS}.
When $W=S^3$, we simply write
$A_{\sk}(\sm,\sl)$
for  $A_{\sk, S^3}(\sm,\sl)$.
Note that $A_{\sk, \sw}(\sm,\sl)\in \z[\sm,\sl]$ has no repeated factors
and always contains the factor $\sl-1$.
Let the {\em non-abelian $A$-polynomial} be defined by
$$\widehat A_{\sk, \sw}(\sm,\sl):=\frac{A_{\sk, \sw}(\sm,\sl)}{\sl-1}.$$

We call the maximum power of $\sm$ (respectively of $\sl$)  in
$\widehat A_{\sk, \sw}(\sm,\sl)$
the $\sm$-degree (respectively the $\sl$-degree) of $\widehat A_{\sk, \sw}(\sm,\sl)$.

\def\ve{\varepsilon}
\def\irr{\mathrm{irr}}
\newcommand\no[1]{}

When $M$ is a finite volume hyperbolic $3$-manifold, the trace
field of $M$  is defined
to be the field generated by the values of a discrete faithful character of
  $M$ over the base field $\q$. It is known that the trace field of $M$ is a number field, i.e.
  a finite degree extension of  $\q$, with the
    extension degree  at least two.

\begin{thm}\label{degree from trace field}
Suppose that $M$ is a hyperbolic knot manifold which is the exterior of a knot
$K$ in a homology $3$-sphere $W$. Let $d$ be the extension
degree of the trace field of $M$ over $\q$.
If the two discrete faithful characters of
$\overline X(M)$ are contained in the same $\BC$-component of $\overline X(M)$,
then both the $\sm$-degree and  the $\sl$-degree of
$\widehat A_{\sk, \sw}(\sm,\sl)$
are at least $d$.
In particular both the $\sm$-degree and  the $\sl$-degree of $\widehat A_{\sk, \sw}(\sm,\sl)$
are at least $2$.
\end{thm}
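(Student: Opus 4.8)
The plan is to bound each of the two degrees of $\widehat A_{\sk,\sw}$ from below by the degree of a peripheral trace function on the $\BC$-components of $X^{\rg}(M)$, and then to bound those degrees below by $d$ using the $\BQ$-structure of $X^{\rg}(M)$ together with the fact that the meridian and longitude become parabolic at a discrete faithful representation. Throughout, let $\mu,\lambda$ be the meridian and longitude, and let $I_\mu,I_\lambda$ be their trace functions on the character varieties (equal to $\sm+\sm^{-1}$ and $\sl+\sl^{-1}$ in eigenvalue coordinates).

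First I recall the construction of the $A$-polynomial from \cite{CCGLS}: one diagonalizes the peripheral representations and records the eigenvalues $(\sm,\sl)\in(\BC^*)^2$ of $\mu,\lambda$; the eigenvalue plane is a $2$-to-$1$ cover of $X(\p M)$ via the Weyl involution $(\sm,\sl)\mapsto(\sm^{-1},\sl^{-1})$, whose branch locus downstairs is finite, and $A_{\sk,\sw}$ is the defining polynomial of the one-dimensional part of the image of $R(M)$ in this plane. By Theorem~\ref{distinct3} each $\BC$-component $X_j$ of $X^{\rg}(M)$ is a curve, $\iota^*:X_j\to Y_j$ is birational, and the $Y_j$ are mutually distinct. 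Let $E_j\subset(\BC^*)^2$ be the full preimage of $Y_j$ under the eigenvalue cover, with defining polynomial $\Pi_j$. Since a generic character on $X_j$ is irreducible and $I_\mu,I_\lambda$ are non-constant on every $\BC$-component of $X^{\rg}(M)$ (by Thurston's deformation theory of the complete structure on the components carrying a discrete faithful character, and then by Galois-invariance of the $\BQ$-defined functions $I_\mu,I_\lambda$ on the remaining conjugate components), $Y_j$ is not the reducible locus $\{I_\lambda=2\}$, so $\Pi_j$ divides the non-abelian part $\widehat A_{\sk,\sw}$; and as the $Y_j$ are distinct, the $\Pi_j$ are pairwise coprime. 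Hence $\deg_{\sl}\widehat A_{\sk,\sw}\ge\sum_j\deg_{\sl}\Pi_j$ and $\deg_{\sm}\widehat A_{\sk,\sw}\ge\sum_j\deg_{\sm}\Pi_j$.

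Next I identify these factor degrees with degrees of trace functions. The restricted cover $E_j\to Y_j$ has degree $2$, while the projection $E_j\to\BC^*$, $(\sm,\sl)\mapsto\sm$, has degree $\deg_{\sl}\Pi_j$. Composing the projection with the degree-$2$ map $\sm\mapsto\sm+\sm^{-1}$ computes $I_\mu$ on $E_j$, as does composing $E_j\to Y_j$ with $I_\mu$ on $Y_j$; comparing the two factorizations gives $\deg_{\sl}\Pi_j=\deg(I_\mu|_{Y_j})=\deg(I_\mu|_{X_j})$, the last equality by the birational map $\iota^*:X_j\to Y_j$. Symmetrically $\deg_{\sm}\Pi_j=\deg(I_\lambda|_{X_j})$. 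Thus it suffices to prove $\sum_j\deg(I_\mu|_{X_j})\ge d$ and $\sum_j\deg(I_\lambda|_{X_j})\ge d$. For this I pass to a single $\BQ$-component: let $\chi_0$ be a discrete faithful character and $V_0$ the $\BQ$-component of $X^{\rg}(M)$ containing it, so that its $\BC$-components form a subset of $\{X_j\}$ and $\sum_j\deg(I_\mu|_{X_j})\ge\sum_{X_j\subset V_0}\deg(I_\mu|_{X_j})=[\BQ(V_0):\BQ(I_\mu)]$, the extension being finite since $I_\mu$ is non-constant on $V_0$. Because the coordinate ring of $X(M)$ is generated by trace functions, the residue field of $V_0$ at $\chi_0$ is exactly the trace field $k$, of degree $d$ over $\BQ$; and since $\mu,\lambda$ are sent to parabolics, $I_\mu(\chi_0)\in\{\pm2\}\subset\BQ$. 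Hence on a smooth projective model of $V_0$ over $\BQ$ a place $w$ lying over $\chi_0$ sits above the degree-one place $I_\mu=I_\mu(\chi_0)$ of $\BQ(I_\mu)$ and has residue field containing $k$, so its residue degree is at least $d$; the fundamental identity $[\BQ(V_0):\BQ(I_\mu)]=\sum_{w'\mid v}e_{w'}f_{w'}$ forces $[\BQ(V_0):\BQ(I_\mu)]\ge d$. The same argument with $I_\lambda$ gives $\sum_j\deg(I_\lambda|_{X_j})\ge d$. Combining the three steps yields $\deg_{\sm}\widehat A_{\sk,\sw}\ge d$ and $\deg_{\sl}\widehat A_{\sk,\sw}\ge d$; as $d\ge2$, the final assertion follows.

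The main obstacle I expect lies in the first two steps rather than the arithmetic one: one must verify that each $\BC$-component of $X^{\rg}(M)$ contributes a genuine, and distinct, non-abelian factor of $\widehat A_{\sk,\sw}$, and track the factor of $2$ through the eigenvalue cover so that $\deg_{\sl}\Pi_j$ is matched exactly with $\deg(I_\mu|_{X_j})$ (taking the full preimage $E_j$ rather than a single lift is what removes the case distinction here). This is precisely where all three conclusions of Theorem~\ref{distinct3} are used. By contrast, the degree bound itself rests on the clean observation that the trace field reappears as a residue degree because the peripheral traces degenerate to the rational values $\pm2$ at a discrete faithful character.
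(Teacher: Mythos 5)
Your proof is correct, and its skeleton coincides with the paper's: use Theorem~\ref{distinct3} to see that each $\BC$-component $X_j$ of $X^{\rg}(M)$ contributes a distinct factor to $\widehat A_{\sk,\sw}$, identify the $\sm$- and $\sl$-degrees of that factor with the degrees of the peripheral trace functions on $X_j$ (birationality of $\iota^*|_{X_j}$ is exactly what makes this identification clean), and then bound the sum of these degrees below by $d$ using the arithmetic of the discrete faithful character. The differences are in how the two key inputs are obtained. For the degree identification the paper cites \cite[Proposition 6.6]{BZ2}, phrased in terms of $f_\g=I_\g^2-4$ and Culler--Shalen norms, giving $\deg_\sl P_j=\tfrac12\deg(f_\m|_{X_j})$; you instead track the factor of $2$ through the eigenvalue cover directly, arriving at the equivalent statement $\deg_\sl\Pi_j=\deg(I_\m|_{X_j})$. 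For the lower bound by $d$ the paper cites \cite{SZ}: the $(\Aut(\c)\times H_1(M;\z_2))$-orbit of a discrete faithful character has $2d$ elements, all zeros of $f_\g$, whence $\sum_j\deg(f_\g|_{X_j})\ge 2d$; your residue-field/place argument is a repackaging of the same fact --- the residue degree $d$ at $\chi_0$ over the rational place $I_\m=\pm2$ is precisely the statement that the $d$ Galois conjugates of $\chi_0$ all lie in that fiber. Working with $I_\m$ rather than $f_\m$ lets you dispense with the $\e^*$-orbit and with the remark that the trace field equals the invariant trace field, which is a mild simplification. One small point in your favor: you explicitly verify that each $\Pi_j$ is coprime to $\sl-1$ before dividing by it, a step the paper leaves implicit (it is justified there by Proposition~\ref{bal-irred}).
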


\def\irr{{\mathrm{irr}}}
\def\redu{{\mathrm{red}}}
\subsection{AJ conjecture}

Suppose $M$ is the exterior of a knot in a homology $3$-sphere.
All the reducible characters in $X(M)$ (resp. $\oX(M)$)
form a unique $\c$-component of $X(M)$ (resp. $\oX(M)$), which we denote by $X^\redu(M)$ (resp. $\oX^\redu(M)$).
We use $X^\irr(M)$ (resp. $\oX^\irr(M)$) to denote the union of the rest of the $\BC$-components of $X(M)$ (resp. $\oX(M)$).
We caution that our definition of $X^\irr(M)$ (resp. $\oX^\irr(M)$) may not be the exact complement of $X^\redu(M)$ (resp. $\oX^\redu(M)$) in $X(M)$ (resp. $\oX(M)$)
and it  may  still contain finitely many
reducible characters.
All $X^\redu(M), X^\irr(M), \oX^\redu(M), \oX^\irr(M)$
 are varieties  defined over $\mathbb Q$.

 \no{There is no known example of a knot $K$ in $S^3$ for which there is a component of $X(M)$ whose image under $\iota^*$ does not have dimension 1.}

For a knot $K$ in $S^3$, its recurrence polynomial $\a_\sk(t, \sm,\sl)\in \z[t, \sm,\sl]$
is derived from  the colored Jones polynomials of $K$, see \cite{G,GaLe,Le}.
The AJ-conjecture raised  in \cite{G} (see also \cite{FGL})  anticipates a  striking  relation between
the colored Jones polynomials of $K$ and the $A$-polynomial of $K$. It states
that for every knot $K\subset S^3$, $\a_\sk(1,\sm,\sl)$  is equal to the A-polynomial
$\ak$ of $K$,
up to a factor depending on $\sm$ only.
The following theorem generalizes \cite[Theorem 1]{LT}
and is the main result of this paper
(see Section \ref{section-AJ conjecture} for  detailed
definitions of terms mentioned here and for more background description).
\def\oX{\overline X}
\begin{thm}\label{AJ}
Let $K$ be a knot in $S^3$ whose exterior $M$ is hyperbolic.
 Suppose the following conditions are satisfied:
 \newline
 (1) $\oX^\irr(M)=\oX^{\rg}(M)$
 and  the two discrete faithful characters of
$\overline X(M)$ are contained in the same $\BC$-component of $\overline X(M)$,
 \newline
 (2) the $\sl$-degree of the recurrence polynomial $\a_\sk(t, \sm,\sl)$ of $K$ is larger than one,
 \newline
 (3)  the localized skein module  $\overline{\mathcal{S}}$ of $M$ is finitely generated.
 \newline
 Then the AJ-conjecture holds for $K$.
\end{thm}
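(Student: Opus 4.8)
The plan is to run the Kauffman-bracket skein-theoretic machinery for the AJ conjecture and to reduce the statement to a single divisibility followed by a matching of $\sl$-degrees, both controlled by the character-variety results proved above. Recall the framework: the colored Jones function of $K$ is annihilated by its recurrence ideal $\hat{\mathcal I}_\sk$ inside the Ore-localized quantum torus $\T$ generated by the meridian operator $\sm$ and the longitude operator $\sl$, which $t$-commute, and $\a_\sk(t,\sm,\sl)$ is the distinguished element of this ideal. The localized skein module $\overline{\mathcal S}$ of $M$ is a module over the skein algebra of $\p M$, the latter being the $\s$-invariant subalgebra of $\T$ for the involution $\sm\mapsto\sm^{-1}$, $\sl\mapsto\sl^{-1}$. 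In the classical limit the skein algebra of $\p M$ is identified, modulo nilpotents, with $\c[X(\p M)]$ and the skein module of $M$ with $\c[X(M)]$, the peripheral action becoming the restriction map $\iota^*$; this dictionary (Bullock, Przytycki--Sikora) is the bridge between the quantum generator $\a_\sk$ and the $A$-polynomial $\ak$.

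First I would record that, since the colored Jones function is $q$-holonomic, $\hat{\mathcal I}_\sk$ is a nonzero principal left ideal of the Ore-localized $\T$, generated up to a factor in $\sm$ by $\a_\sk$; hence the classical limit of the entire ideal can be read off from the single polynomial $\a_\sk(1,\sm,\sl)$. Hypothesis (3) enters differently: the finite generation of $\overline{\mathcal S}$ over the peripheral skein algebra says that, in the classical limit, $\c[X(M)]$ is a finite module over $\c[X(\p M)]$ through $\iota^*$, so on each rational-geometric component the restriction is a finite morphism whose degree is a well-defined integer to be matched against the $A$-polynomial degree below.

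Next comes the divisibility $\ak\mid\a_\sk(1,\sm,\sl)$, up to a factor in $\sm$. Passing to the classical limit, any recurrence for the colored Jones function lands in the ideal of functions on $X(\p M)$ vanishing on the Zariski closure of $\iota^*(X(M))$; since $\ak$ is the defining polynomial of the one-dimensional part of that image, it divides $\a_\sk(1,\sm,\sl)$. Hypothesis (1) is essential here. The reducible characters contribute the abelian factor $\sl-1$, while the equality $\oX^{\irr}(M)=\oX^{\rg}(M)$ together with $K\subset S^3$ guarantees that every non-abelian component of the character variety carries a discrete faithful character; thus the non-abelian part of the peripheral image seen by the skein module is exactly the rational-geometric one and contributes precisely $\hak$, so no factor of $\ak=(\sl-1)\hak$ is dropped.

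The reverse divisibility, which upgrades the above to equality up to an $\sm$-factor, is the crux and I expect it to be the main obstacle; the strategy is the degree chain $\deg_\sl\ak\le\deg_\sl\a_\sk(1,\sm,\sl)\le\deg_\sl\a_\sk\le r=\deg_\sl\ak$, forcing all four quantities to coincide. The first inequality is the divisibility just established; the third is the holonomic-rank bound from hypothesis (3), where $r$ is the degree of the finite morphism $\iota^*$. The final equality $r=\deg_\sl\ak$ is where the geometry does its work: Theorem \ref{distinct3} shows that the $\c$-components $X_1,\dots,X_k$ of $X^{\rg}(M)$ map birationally onto mutually distinct curves $Y_j\subset X(\p M)$, so the $Y_j$ contribute distinct, uncollapsed factors to $\hak$ and $\deg_\sl\ak$ is pinned down (and is at least $d+1\ge 3$ by Theorem \ref{degree from trace field}). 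Hypothesis (2), that $\deg_\sl\a_\sk>1$, is used to prevent the degenerate collapse in which the classical specialization $t\to 1$ would kill the leading $\sl$-coefficient and lose $\sl$-degree. Matching all the degrees turns the divisibility into $\a_\sk(1,\sm,\sl)\doteq\ak$ up to a factor in $\sm$, which is the AJ conjecture for $K$. The delicate points needing care are exactly that the classical limit of the principal generator neither drops $\sl$-degree nor acquires spurious factors --- handled by (2) together with the birationality in Theorem \ref{distinct3} --- and that the nilpotents in the skein-to-coordinate-ring identification do not corrupt the rank and degree bookkeeping.
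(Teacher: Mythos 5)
Your outline runs the right general machinery, but the central divisibility goes in the wrong direction, and the step you flag as ``the crux'' is not one that the paper (or the literature it relies on) can deliver. The containment coming from Frohman--Gelca--Lofaro and Garoufalidis is that the quantum peripheral ideal sits inside the recurrence ideal, $\P\subset\A_\sk$; since $\wA_\sk$ is the principal left ideal generated by $\a_\sk$, this yields, after specializing at $t=-1$, that $\a_\sk(-1,\sm,\sl)$ \emph{divides} the generator $B_\sk(\sm,\sl)$ of the extended classical peripheral ideal --- i.e.\ the recurrence polynomial divides the $A$-polynomial side, not the other way around. Your claim that ``any recurrence \dots lands in the ideal of functions vanishing on the Zariski closure of $\iota^*(X(M))$'', giving $A_\sk(\sm,\sl)\mid\a_\sk(1,\sm,\sl)$, is precisely the containment that is \emph{not} known; nothing in hypotheses (1)--(3) or in Theorems \ref{distinct3} and \ref{degree from trace field} produces it. Moreover, even the correct divisibility $\a_\sk(-1,\sm,\sl)\mid B_\sk(\sm,\sl)$ is not free: one must know that the $t=-1$ limit of the localized peripheral ideal is the full classical peripheral ideal, which is exactly what Proposition \ref{fs is reduced} (reducedness of $\overline\fs$ and surjectivity of $\bar\theta$) supplies via \cite[Corollary 3.6]{LT}; proving that proposition is the technical heart of the paper (essential reducedness of the character scheme in Section \ref{sec.scheme}, the localization at $\c(\sm)$, and the birationality and distinctness statements of Theorem \ref{distinct3}). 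You dismiss this as ``nilpotents \dots do not corrupt the bookkeeping,'' but it is precisely the content that had to be established.

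The second half of your argument --- the chain $\deg_\sl A_\sk\le\deg_\sl\a_\sk(1,\sm,\sl)\le\deg_\sl\a_\sk\le r=\deg_\sl A_\sk$ --- does not close. The first inequality rests on the reversed divisibility; the bound $\deg_\sl\a_\sk\le r$ with $r$ ``the degree of the finite morphism $\iota^*$'' is unsupported (by Theorem \ref{distinct3} that degree is $1$ on each component, which has no direct relation to $\deg_\sl A_\sk$); and no identity $r=\deg_\sl A_\sk$ is available. What actually closes the argument in the paper is Proposition \ref{bal-irred}: under hypothesis (1), $\widehat A_\sk(\sm,\sl)$ is balanced-irreducible in $\BZ[\sm^2,\sl]$ and carries no $\sm$- or $\sl$-factor. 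Since $\sl-1$ divides $\a_\sk(-1,\sm,\sl)$ and hypothesis (2) (via \cite[Lemma 3.9]{LT}) forces $\deg_\sl\a_\sk(-1,\sm,\sl)\ge 2$, the quotient $\a_\sk(-1,\sm,\sl)/(\sl-1)$ is a nonconstant balanced factor of the balanced-irreducible $\widehat A_\sk(\sm,\sl)$, hence equals it up to sign. That irreducibility step --- not a degree count --- is what upgrades the one-way divisibility to the AJ equality, and it is where Theorems \ref{distinct} and \ref{distinct3} are really used.
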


In \cite[Theorem 1]{LT}, it is required   that $X^\irr(M)=X^{\rg}(M)$ and both  are $\c$-irreducible, which is obviously stronger than our condition (1) of Theorem~\ref{AJ}. In general, irreducibility over $\c$ is difficult to check.
We also remove the condition required in  \cite[Theorem 1]{LT} that the universal $SL_2$-character ring of $M$
is reduced.

It was known that condition (2) of Theorem~\ref{AJ} is satisfied by any nontrivial adequate knot
(in particular any nontrivial alternating knot) in $S^3$ (see \cite{Le}) and
condition (3) of Theorem~\ref{AJ}
is satisfied by all $2$-bridge knots
  (see \cite{Le}) and all pretzel knots of the form $(-2,3,2n+1)$ (see \cite{LT}).
Concerning condition (1) of Theorem \ref{AJ}, we have the following.

\begin{thm}\label{2-bridge}Let $K$ be a $2$-bridge knot in $S^3$ with a hyperbolic exterior $M$.
\newline (1) The two discrete faithful characters of
 $\overline X(M)$ are  contained in the same $\BC$-component of $\overline X(M)$,
 \newline
 (2) All the four discrete faithful $SL_2(\c)$-characters are  contained in the same $\BC$-component of $ X(M)$, and
  $X^{\rg}(M)$ is irreducible over $\q$.
 \end{thm}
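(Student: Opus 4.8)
The plan is to work with Riley's explicit parametrization of the non-abelian characters of a two-bridge knot group $\pi_1(M)=\langle a,b\mid wa=bw\rangle$, in which $a,b$ are conjugate meridians. Since every character is a polynomial in $x:=\tr(a)=\tr(b)$ and $z:=\tr(ab)$, the non-abelian part of $X(M)$ is a plane curve $\{\Phi_K(x,z)=0\}\subset\c^2$. The tensoring-by-sign operation coming from the generator of $H^1(M;\2)=\2$ (sending $\m\mapsto-1$ and fixing $\l$ and $ab$) induces the involution $\t\colon(x,z)\mapsto(-x,z)$, which preserves the non-abelian character variety; its quotient is the non-abelian part $\{\Psi_K(w,z)=0\}$ of $\oX(M)$, with quotient coordinate $w:=x^2=\tr(\m)^2$, the natural $PSL_2(\c)$ invariant. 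Complex conjugation gives a second involution $\s\colon(x,z)\mapsto(\bar x,\bar z)$ with the same zero locus. Writing $z_0$ for $\tr(ab)$ at the complete hyperbolic structure, the two discrete faithful $PSL_2(\c)$-characters sit at $w=4,\ z\in\{z_0,\bar z_0\}$, and the four discrete faithful $SL_2(\c)$-characters are exactly $(\pm2,z_0)$ and $(\pm2,\bar z_0)$.

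For part (1) I would show that the geometric $\c$-component $\oX_0$ of $\oX(M)$ (the one containing $w=4,\ z=z_0$) is invariant under $\s$. Since $\s$ permutes the $\c$-components and fixes every real character, it suffices to exhibit infinitely many real characters on $\oX_0$: if $\oX_0\neq\s(\oX_0)$, then the real locus of $\oX_0\cup\s(\oX_0)$ lies in the finite set $\oX_0\cap\s(\oX_0)$, a contradiction. The real characters I would use are those of the arcs of irreducible $SU(2)$- and $SL_2(\mathbb R)$-representations that a two-bridge knot group is known to carry; the substantive point, which I expect to be the main obstacle of part (1), is to verify that such a one-real-parameter family actually lies on the \emph{geometric} component rather than on some other $\c$-component. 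Once $\s(\oX_0)=\oX_0$ is established, $\bar\chi_0=\s(\chi_0)\in\oX_0$, so both discrete faithful $PSL_2(\c)$-characters lie on the single component $\oX_0$.

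For part (2) the $SL_2(\c)$ geometric curve $X_0$ maps to $\oX_0$ via the degree-two map $x\mapsto w=x^2$, a double cover branched where $w\in\{0,\infty\}$. The same real-point argument shows $X_0$ is $\s$-invariant, so it is enough to prove that this double cover is connected, i.e. that $\Psi_0(x^2,z)$ is irreducible over $\c$, where $\Psi_0$ is the $\c$-irreducible factor cutting out $\oX_0$. Equivalently one must show that $w=\tr(\m)^2$ is \emph{not} a square in the function field $\c(\oX_0)$: were $w=g^2$, the two parabolic lifts $(2,z_0)$ and $(-2,z_0)$ would lie on the distinct sheets $x=\pm g$ and the cover would split. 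I would establish non-squareness by locating the trace-free characters ($x=0$) on $\oX_0$ and checking that $w$ has a zero of odd order there, so that the cover is genuinely branched and hence connected; this is where the two-bridge hypothesis enters, through the relation between trace-free characters and the branched double cover $L(p,q)$ of $S^3$. Connectedness then forces $\t(X_0)=X_0$, so $X_0$ contains $(\pm2,z_0)$ and, by $\s$-invariance, also $(\pm2,\bar z_0)$: all four discrete faithful $SL_2(\c)$-characters lie on the single $\c$-component $X_0$.

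Finally, $X^{\rg}(M)$ is the union of the $\q$-components of $X(M)$ meeting the discrete faithful locus; since all four such characters lie on the single $\c$-component $X_0$, the $\q$-component through $X_0$ contains them all and equals $X^{\rg}(M)$, which is therefore $\q$-irreducible. The two genuinely hard points are thus (i) placing a real one-parameter family of characters on the geometric component, needed for both $\s$-invariance statements, and (ii) the odd-order branching, equivalently non-squareness, computation that makes the $SL_2(\c)$ double cover connected; the two-bridge structure and its lens-space branched cover are exactly the inputs I would exploit for (ii).
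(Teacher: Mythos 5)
Your skeleton is right---both parts reduce to showing that the relevant $\c$-component is invariant under complex conjugation $\tau$ and, for part (2), under the sign-twist involution $\varepsilon^*$---but the two steps you yourself flag as ``the genuinely hard points'' are exactly the content of the theorem, and the proposal does not supply them. You never actually place an arc of $SU(2)$- or $SL_2(\mathbb R)$-characters on the \emph{geometric} component, and you never carry out the odd-order-branching (non-squareness) computation; as written this is a strategy with its two load-bearing steps missing.

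The paper closes both gaps at once with a single pointwise tool: dihedral characters. By Tanguay (see also Boileau--Boyer), every $\c$-component of $\overline X^{\mathrm{irr}}(M)$ of a hyperbolic two-bridge knot contains a dihedral character, and every dihedral character is a \emph{smooth point} of $\overline X(M)$ (resp.\ of $X(M)$), hence lies on a unique $\c$-component. A dihedral character is real-valued, so it is fixed by $\tau$; being a smooth point on a unique component, it forces that component to be $\tau$-invariant. This gives part (1) from one fixed smooth point rather than an infinite real arc---your finiteness argument needs infinitely many real points precisely because a non-smooth real point could lie on two components, and smoothness removes that need. For part (2), Boyer--Zhang show a dihedral $SL_2(\c)$-character is also fixed by $\varepsilon^*$, so the same smooth-fixed-point argument makes every component of $X^{\mathrm{irr}}(M)$ invariant under both $\tau$ and $\varepsilon^*$, putting all four discrete faithful characters on one component $X_1$. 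Your criterion is equivalent in spirit (connectedness of the double cover $x\mapsto x^2=w$ over $\overline X_1$ is exactly $\varepsilon^*$-invariance of $X_1$), but the branching verification is the part that is missing, and the paper sidesteps it entirely. Your final deduction that $X^{\rg}(M)$ is the $\Aut(\c)$-orbit of $X_1$ and hence $\q$-irreducible is correct and matches the paper.
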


Therefore  we have the following corollary which
 generalizes \cite[Theorem 2 (b)]{LT}.

 \begin{cor}\label{cor for 2-bridge} Let $K$ be a $2$-bridge knot in $S^3$ with a hyperbolic exterior $M$.
%(a)
 If $X^\irr(M)=X^{\rg}(M)$, then the AJ-conjecture holds for $K$.

%(b) If $X^\irr(M)$ is $\q$-irreducible, then $X^\rg(M) = X^\irr(M)$.
\end{cor}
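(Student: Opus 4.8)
The plan is to obtain the corollary as a direct application of Theorem \ref{AJ}, by checking its three hypotheses for a $2$-bridge knot $K$ with hyperbolic exterior $M$, under the extra assumption $X^\irr(M)=X^{\rg}(M)$. Conditions (2) and (3) require no new work: a $2$-bridge knot is a nontrivial alternating knot, so by the results of \cite{Le} recalled after Theorem \ref{AJ} the $\sl$-degree of its recurrence polynomial $\a_\sk(t,\sm,\sl)$ is larger than one, and the localized skein module of any $2$-bridge knot exterior is finitely generated. Thus everything comes down to verifying condition (1).

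The second half of condition (1), namely that the two discrete faithful characters of $\oX(M)$ lie in a common $\BC$-component, is exactly Theorem \ref{2-bridge}(1). It therefore remains to upgrade the $SL_2(\c)$ equality $X^\irr(M)=X^{\rg}(M)$ to the $PSL_2(\c)$ equality $\oX^\irr(M)=\oX^{\rg}(M)$. I would do this via the projection $SL_2(\c)\to PSL_2(\c)$ and the induced regular map $q:X(M)\to\oX(M)$. The key input is that for a knot exterior $M$ in a homology $3$-sphere one has $H^2(M;\z_2)=0$ (because $H_2(M;\z)=0$ and $H_1(M;\z)=\z$, so both universal-coefficient contributions vanish); hence the obstruction to lifting a $PSL_2(\c)$ representation of $\pi_1(M)$ to $SL_2(\c)$ is always zero, and every irreducible $PSL_2(\c)$ character is the image under $q$ of an irreducible $SL_2(\c)$ character. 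Consequently $q$ maps the irreducible locus of $X(M)$ onto a Zariski-dense subset of $\oX^\irr(M)$, giving $\overline{q(X^\irr(M))}=\oX^\irr(M)$; and since $q$ carries each discrete faithful $SL_2(\c)$ character to a discrete faithful $PSL_2(\c)$ character, it sends every $\BC$-component of $X^{\rg}(M)$ into a component of $\oX(M)$ containing a discrete faithful character, so $q(X^{\rg}(M))\subseteq\oX^{\rg}(M)$.

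Combining these facts with the hypothesis $X^\irr(M)=X^{\rg}(M)$ gives
$$\oX^\irr(M)=\overline{q(X^\irr(M))}=\overline{q(X^{\rg}(M))}\subseteq\oX^{\rg}(M)\subseteq\oX^\irr(M),$$
where the final inclusion holds because every $\BC$-component of $\oX^{\rg}(M)$ contains an irreducible (discrete faithful) character and is hence non-reducible. This forces $\oX^\irr(M)=\oX^{\rg}(M)$, so condition (1) holds in full and Theorem \ref{AJ} yields the AJ conjecture for $K$. The step I expect to require the most care is the surjectivity statement $\overline{q(X^\irr(M))}=\oX^\irr(M)$: one must check not merely that representations lift, but that the lifting respects the component structure closely enough for the Zariski closures to coincide, and one must confirm that the finitely many reducible characters possibly lurking in $X^\irr(M)$ (as cautioned in the definition) do not interfere with the density argument. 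That is where I would concentrate the effort.
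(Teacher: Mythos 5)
Your proposal is correct and follows the paper's own route: verify conditions (2) and (3) of Theorem \ref{AJ} via the cited results on adequate/$2$-bridge knots, get the discrete-faithful part of condition (1) from Theorem \ref{2-bridge}(1), and pass from $X^\irr(M)=X^{\rg}(M)$ to $\oX^\irr(M)=\oX^{\rg}(M)$ using surjectivity of the quotient map $X(M)\to\oX(M)$. The last step is exactly the content of Remark \ref{r.equal} in the paper (stated there via preimages, which sidesteps the Zariski-closure care you flag), so your argument is essentially a re-derivation of what the paper already records.
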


Note that a two-bridge knot has hyperbolic exterior if and only if it is not a torus knot, and for all torus knots the AJ  conjecture is known to hold \cite{Hikami,Tran}.

Since $X^\rg(M) \subset X^\irr(M)$ and $X^\rg(M)$ is defined over $\q$, if $X^\irr(M)$ is $\q$-irreducible, then $X^\rg(M) = X^\irr(M)$.
For a two-bridge knot, the variety $X^\irr(M)$ is the zero locus of the  Riley polynomial which is a polynomial in two variable, see \cite{Riley}. Hence, we have the following.

 \begin{cor}\label{2-bridge 2} Let $K$ be a $2$-bridge knot in $S^3$. If $X^\irr(M)$ is $\q$-irreducible, or if the Riley polynomial of $K$ is irreducible over $\q$, then the AJ conjecture holds for $K$.
\end{cor}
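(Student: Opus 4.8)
The plan is to reduce everything to Corollary~\ref{cor for 2-bridge}, whose sole hypothesis is that $X^\irr(M)=X^{\rg}(M)$. First I would dispose of the non-hyperbolic case: a $2$-bridge knot has non-hyperbolic exterior exactly when it is a torus knot, and for torus knots the AJ conjecture is already known by \cite{Hikami,Tran}. Hence I may assume $K$ is not a torus knot, so that $M$ is hyperbolic and the notions $X^{\rg}(M)$, $X^\irr(M)$ together with Corollary~\ref{cor for 2-bridge} all apply. It then suffices to show that each of the two stated hypotheses forces $X^\irr(M)=X^{\rg}(M)$.

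For the first hypothesis, that $X^\irr(M)$ is $\q$-irreducible, I would appeal to the observation recorded just before the corollary. Since $M$ is hyperbolic there exist discrete faithful characters, so $X^{\rg}(M)$ is a nonempty union of $\q$-components of $X(M)$, none of which is the reducible component $X^\redu(M)$; thus $X^{\rg}(M)\subseteq X^\irr(M)$ and $X^{\rg}(M)$ is defined over $\q$. A $\q$-irreducible $X^\irr(M)$ is a single $\q$-component of $X(M)$, so the nonempty sub-union $X^{\rg}(M)$ can only coincide with it, giving $X^{\rg}(M)=X^\irr(M)$ and letting Corollary~\ref{cor for 2-bridge} conclude.

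For the second hypothesis, that the Riley polynomial of $K$ is irreducible over $\q$, I would reduce to the first. By \cite{Riley} the variety $X^\irr(M)$ is the zero locus in $\BC^2$ of the Riley polynomial. An irreducible element of $\q[x,y]$ is prime, so its radical ideal is prime, whence the zero locus of an irreducible Riley polynomial is $\q$-irreducible. Thus irreducibility of the Riley polynomial over $\q$ yields $\q$-irreducibility of $X^\irr(M)$, and the previous paragraph then applies verbatim.

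This corollary is essentially a repackaging of earlier results, so the real content lies in Corollary~\ref{cor for 2-bridge} (and hence in Theorems~\ref{AJ} and~\ref{2-bridge}); the only points requiring care are the separate treatment of torus knots, where $M$ fails to be hyperbolic, and the passage between the two meanings of irreducibility: the polynomial-level fact that an irreducible Riley polynomial cuts out a $\q$-irreducible variety, and the variety-level fact that a $\q$-irreducible $X^\irr(M)$ can contain the nonempty $\q$-subvariety $X^{\rg}(M)$ only as an equality. I expect the latter bookkeeping with $\q$-components to be the subtlest step, though it becomes immediate once one records that $X^{\rg}(M)$ is by definition a union of $\q$-components of $X(M)$.
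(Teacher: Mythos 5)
Your proof is correct and follows essentially the same route as the paper: the paper likewise notes that $X^{\rg}(M)\subset X^\irr(M)$ with $X^{\rg}(M)$ defined over $\q$ forces $X^{\rg}(M)=X^\irr(M)$ when $X^\irr(M)$ is $\q$-irreducible, identifies $X^\irr(M)$ with the zero locus of the Riley polynomial to handle the second hypothesis, and then invokes Corollary~\ref{cor for 2-bridge}. Your explicit treatment of the torus-knot case and of the passage from irreducibility of the polynomial to $\q$-irreducibility of its zero locus only spells out steps the paper leaves implicit.
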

\def\bb{\mathfrak{b}}
In \cite[Section A1]{LT} it was proved that the Riley polynomial of the two bridge knot $\bb(p,q)$ is $\q$-irreducible if $p$ is a prime. Here we use the notation of \cite{BuZ} for two bridge knots: $\bb(p,q)$ is the two bridge knot such that the double branched covering of $S^3$ along $\bb(p,q)$ is the lens space $L(p,q)$. Note that both $p,q$ are odd numbers, co-prime with each other, and $1 \le q \le p-2$, and $\bb(p,q)$ is hyperbolic if and only if $q\neq 1$. When $q=1$, $\bb(p,1)$ is a torus knot, and the AJ conjecture for it holds.
Thus we have

 \begin{cor}\label{2-bridge 3} The AJ conjecture holds for all two bridge knots $\bb(p,q)$ with odd prime $p$.
\end{cor}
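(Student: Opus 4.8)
The strategy is to assemble the corollary from the pieces already in place, so very little genuinely new argument is needed; the work is in checking that the hypotheses line up.

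First I would recall the two facts cited in the paragraph immediately preceding the statement. On the one hand, \cite[Section A1]{LT} establishes that the Riley polynomial of the two-bridge knot $\bb(p,q)$ is $\q$-irreducible whenever $p$ is prime. On the other hand, for a two-bridge knot the variety $X^\irr(M)$ is precisely the zero locus of the Riley polynomial (a two-variable polynomial), as recorded above via \cite{Riley}. Consequently, under the assumption that $p$ is an odd prime, $X^\irr(M)$ is $\q$-irreducible. This is exactly the hypothesis of Corollary \ref{2-bridge 2}, so I would invoke that corollary to conclude that the AJ conjecture holds for $\bb(p,q)$ in the hyperbolic case. (Corollary \ref{2-bridge 2} itself rests on the chain $X^\rg(M)\subset X^\irr(M)$ together with the fact that $X^\rg(M)$ is defined over $\q$: $\q$-irreducibility of $X^\irr(M)$ forces $X^\rg(M)=X^\irr(M)$, which feeds condition (1) of Theorem \ref{AJ} through Theorem \ref{2-bridge}.)

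The one point that requires care is the non-hyperbolic case, and this is the step I expect to be the only real obstacle to a clean statement. By the discussion before the corollary, $\bb(p,q)$ is hyperbolic if and only if $q\neq 1$; the values $q=1$ give torus knots. So the argument above, which runs through the character-variety machinery built for hyperbolic exteriors, covers all $\bb(p,q)$ with $p$ an odd prime and $q\neq 1$. For the remaining cases $\bb(p,1)$, which are torus knots, the AJ conjecture is already known by \cite{Hikami,Tran}. I would therefore split into these two cases explicitly and cite the torus-knot result to close the gap.

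Putting the two cases together yields the AJ conjecture for every two-bridge knot $\bb(p,q)$ with $p$ an odd prime, which is the assertion of the corollary. In summary, the proof is a short case division: the hyperbolic knots are handled by Corollary \ref{2-bridge 2} via the primality-driven $\q$-irreducibility of the Riley polynomial from \cite{LT}, and the torus knots $\bb(p,1)$ are handled by the known validity of the AJ conjecture for torus knots.
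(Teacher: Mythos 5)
Your proposal is correct and follows exactly the paper's own route: the case $q\neq 1$ is handled by Corollary \ref{2-bridge 2} via the $\q$-irreducibility of the Riley polynomial of $\bb(p,q)$ for prime $p$ from \cite[Section A1]{LT}, and the torus-knot case $q=1$ is disposed of by \cite{Hikami,Tran}. Your explicit attention to the non-hyperbolic case matches the paper's remark that $\bb(p,1)$ is a torus knot, so nothing is missing.
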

\no{
Actually, a conjecture of xxxx states that the Riley polynomial of $\bb(p,q)$ is $\q$-reducible if and only if xxxx
A direct calculation shows that  all two-bridge knots $\bb(p,q)$ with $ p \le 500$, except for are xxxx.
}

{\bf Plan of paper}.
In \S2, we prove Theorems \ref{distinct}, \ref{distinct3} and \ref{degree from trace field}. The proof of Theorem \ref{distinct}   applies the theory of
 volumes of representations  developed in \cite{H}, \cite{CCGLS}, \cite{D},
 \cite{F}, plus  the consideration of the $Aut(\c)$-action on varieties.
 Theorem \ref{distinct3} follows quickly from
 Theorem \ref{distinct} under the consideration of
the $H_1(M; \z_2)$-action on $X(M)$. Theorem \ref{degree from trace field}
follows from Theorem \ref{distinct3} together with the fact observed in \cite{SZ}
that the $(Aut(\c)\times H_1(M; \z_2))$-orbit of a discrete faithful
$SL_2(\c)$-character of $X(M)$, which of course  is contained in $X^\rg(M)$, contains
at least $2d$ elements.
\S 2 also contains some related results, notably Theorem \ref{distinct2} which is
a refinement of Theorem \ref{distinct}, and Proposition \ref{bal-irred} which gives
a property of A-polynomial that will be applied in the proof of
Theorem \ref{AJ} in \S4 (see also Remark \ref{a refinement}).
To prove our main result Theorem \ref{AJ}
we need to first prepare some properties concerning the representation schemes and character schemes of  knot manifolds
in \S3. In \S4, we illustrate how the  approach of \cite{LT}
 can be applied to reduce  Theorem \ref{AJ}
to  Proposition \ref{fs is reduced}.
This proposition will then  be proved in \S5, where Theorem \ref{distinct3} and results
from \S3 are applied.
In last section, we prove Theorem \ref{2-bridge} which follows easily from
results in \cite[Section 5]{T} and a result of \cite{BZ3}, concerning dihedral characters.

{\bf Acknowledgements}. We would like to thank Stefano Francaviglia, Tomotada Ohtsuki, Adam Sikora, Anh Tran for helpful discussions, and
especially Joan Porti for providing the proof of
Proposition~\ref{abelian comp is reduced}.

\section{Proofs  of Theorems \ref{distinct}, \ref{distinct3} and \ref{degree from trace field}}\label{sec:pf}

\def\Aut{\mathrm{Aut}}
\subsection{Preliminaries}\label{sec.prelim}
Let $\Aut(\c)$ denote the group of all field automorphisms
of the complex field $\c$. Let $\t\in \Aut(\c)$ denote the complex conjugation.

Each element $\phi\in \Aut(\c)$ extends to a unique ring automorphism of the ring $\BC[x_1,\dots, x_n]$ by $\phi(x_i)= x_i$ for $i=1,\dots,n$.
Each element $\phi\in \Aut(\c)$  acts naturally  on
the complex affine space $\c^n$ coordinate-wise  by
$$\phi(a_1,...,a_n):=(\phi(a_1),...,\phi(a_n)).$$
\no{Each element $\phi\in \Aut(\c)$ also acts naturally as an ring automorphism of $\c[x_1,...,x_n]$:
for $f(x_1,...,x_n)=\sum a_{i_1,...,i_n}x_1^{i_1}\cdots x^{i_n}_n\in \c[x_1,...,x_n]$,
$$\phi(f)=\sum \phi(a_{i_1,...,i_n})x_1^{i_1}\cdots x^{i_n}_n.$$}
As a ring automorphism, $\phi$ maps an ideal of $\c[x_1,...,x_n]$ to an ideal, a primary ideal
to a primary ideal and a prime ideal to a prime ideal. If $I\subset \c[x_1,...,x_n]$
is an ideal defined over $\q$, i.e. $I$ is generated by elements
in $\z[x_1,...,x_n]$, then $\phi(I)=I$.
If $V(I)\subset \c^n$ is the zero locus defined by an ideal $I\subset \c[x_1,...,x_n]$,
then $\phi(V(I))=
V(\phi(I))$. We call $\phi(V(I))$ a Galois conjugate of $V(I)$.
 As a map from $\c^n$ to itself,
 $\phi$ maps a variety to a variety, an irreducible
variety to an irreducible variety preserving its dimension.
Furthermore if $V$ is a variety  defined over $\mathbb Q$,
then for any $\BC$-component $V_1$ of $V$, the $\Aut(\c)$-orbit of $V_1$
is the  $\q$-component of $V$  containing $V_1$.
In particular if $V$ is $\q$-irreducible,
then its $\BC$-components    are the $\Aut(\c)$-orbit of one of them
and thus all have the same dimension. (cf. \cite[Section 5]{BZ2}).

%, the only element of $\Aut(\c)$ which is continuous in the $\c$-topology.

A variety is {\em 1-equidimensional} if every its $\BC$-component has dimension 1. A rational map $f: V_1\to V_2$ between two 1-equidimensional varieties is said to have degree $d$ of there is an open dense subset $V_2' \subset V_2$ such that $f^{-1}(V_2')$ is dense in $V_1$ and $f^{-1}(x)$ has exactly $d$ elements for each $x\in V_2'$. When $V_1, V_2$ are $\c$-irreducible, this definition is the same as the well-known definition of a degree $d$ map in algebraic geometry \cite{S}. It is known that a rational map between two $\c$-irreducible varieties is birational if and only if it has degree 1.
\def\oXM{\overline X(M)}
\subsection{Proof of Theorem \ref{distinct}}\label{G-action}
By Mostow-Prasad rigidity, $\oXM$ has  two discrete faithful characters, which are related by the $\t$-action. Let
 $\scz$ be one of the two  discrete faithful characters, then $\t(\scz)$ is the other
one.
By \cite[Corollary 3.28]{P} (which is also valid in $PSL_2(\c)$-setting), each of
$\scz$ and $\t(\scz)$ is a smooth point of $\overline X(M)$. In particular
each of them is contained in a unique $\BC$-component  of $\overline X(M)$, which has dimension 1 (a curve) by a result of
Thurston (see \cite[Proposition 1.1.1]{CGLS}).

We may assume that  $\overline X_1$ is the $\BC$-component
of $\overline X(M)$
which contains $\scz$.
It follows obviously that $\overline X^{\rg}(M)$ (whose definition is given
in Section \ref{intro}) is the $\Aut(\c)$-orbit of
$\overline X_1$, and thus is irreducible over $\q$.
Furthermore each $\BC$-component of $\overline X^{\rg}(M)$    is a
curve.
Hence we have proved part (1) of Theorem \ref{distinct}

By \cite[Theorem 3.1]{D}, $\iota^*: \overline X_1\ra \overline Y_1$ is a birational isomorphism.
\no{ Note that a rational map between two
irreducible algebraic curves is a birational isomorphism iff
it is a degree one map, i.e. a one-to-one map away from
finitely many points of the curves.}
For each  $j=2,...,l$, there is $\phi_j\in \Aut(\c)$ such that $\overline X_j=\phi_j(\overline X_1)$.
 Since $\iota^*$ is defined over $\q$, we have the following commutative
diagram of maps:
$$
\begin{diagram}[height=2em,w=3em]
   \overline X_1&\rTo^{\iota^*}&\overline Y_1\\
\dTo^{\phi_j}&&\dTo_{\phi_j}\\
\overline X_j&\rTo^{\iota^*}&\overline Y_j.
\end{diagram}$$

As $\phi_j$ is a bijection and $\iota^*:\overline X_1\ra \overline Y_1$
is a degree one map,
$\iota^*: \overline X_j\ra \overline Y_j$ is a degree one map and thus
is a birational isomorphism for each $j$.
This proves part (2) of Theorem \ref{distinct}.

Now we proceed to prove part (3) of Theorem \ref{distinct}. By our assumption, both $\scz$ and $\tau(\scz)$ are contained in $\overline X_1$.

\begin{prop}\label{different}For each $j=2,...,l$,   $\overline Y_j$ and $\overline Y_1$ are two distinct curves.
\end{prop}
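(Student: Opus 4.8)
The plan is to argue by contradiction, using the volume of representations as the invariant that separates the two curves, and relying crucially on the standing hypothesis that both $\scz$ and $\t(\scz)$ lie on the single component $\overline X_1$. I would write $\r_0$ for a discrete faithful representation with character $\scz$, so that $\t(\r_0)$ has character $\t(\scz)$, and set $V=\mathrm{Vol}(M)>0$. The proof would draw on two facts from the volume theory of \cite{H,CCGLS,D,F}. First, \emph{volume rigidity}: the assignment of the volume of a representation to its character gives a continuous function $\mathrm{Vol}\colon \overline X(M)\to\mathbb R$ with $|\mathrm{Vol}(\chi)|\le V$ for every $\chi$, and equality $\mathrm{Vol}(\chi)=\pm V$ holds exactly at the discrete faithful characters. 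Second, the Neumann--Zagier/Schläfli mechanism underlying \cite[Theorem 3.1]{D}: along any curve of irreducible characters the differential of $\mathrm{Vol}$ is the $\iota^*$-pullback of one fixed real $1$-form on $\overline X(\p M)$. In particular, on such a curve the volume is determined by its image in $\overline X(\p M)$ up to an additive constant.

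I would then assume, toward a contradiction, that $\overline Y_j=\overline Y_1=:Y$ for some $j\ge 2$. By part (2) of Theorem~\ref{distinct}, both $\iota^*\colon\overline X_1\to Y$ and $\iota^*\colon\overline X_j\to Y$ are birational isomorphisms, so they induce a birational map $\Phi=(\iota^*|_{\overline X_j})^{-1}\circ(\iota^*|_{\overline X_1})\colon\overline X_1\dashrightarrow\overline X_j$ with $\iota^*\circ\Phi=\iota^*$ on a dense open subset $U\subseteq\overline X_1$. Since $d\mathrm{Vol}$ on each of the two curves is the $\iota^*$-pullback of the same $1$-form on $Y$, the functions $\mathrm{Vol}\circ\Phi$ and $\mathrm{Vol}$ have equal differentials on $U$ and hence differ by a constant: $\mathrm{Vol}(\Phi(x))=\mathrm{Vol}(x)-C$ for all $x\in U$ and some $C\in\mathbb R$.

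Next I would extract a contradiction by comparing ranges. Because both $\r_0$ and $\t(\r_0)$ have their characters on $\overline X_1$, the continuous function $\mathrm{Vol}$ attains $+V$ and $-V$ on $\overline X_1$, and takes values arbitrarily close to $\pm V$ already on the dense set $U$. Transporting these values through $\Phi$ and using $|\mathrm{Vol}|\le V$ on $\overline X_j$ gives $V-C\le\sup_{\overline X_j}\mathrm{Vol}\le V$ together with $-V\le\inf_{\overline X_j}\mathrm{Vol}\le -V-C$, which forces $C=0$ and $\sup_{\overline X_j}\mathrm{Vol}=V$. Thus $\overline X_j$ carries characters of volume approaching $V$; by the compactness/almost-rigidity part of the volume theory such a sequence cannot escape to an ideal point and must subconverge, inside the closed set $\overline X_j$, to a character of volume exactly $V$, i.e. to a discrete faithful character. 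But $\scz$ and $\t(\scz)$ are smooth points of $\overline X(M)$ by \cite[Corollary 3.28]{P}, hence lie on a unique $\BC$-component, namely $\overline X_1$; since $j\ge 2$ gives $\overline X_j\neq\overline X_1$, no discrete faithful character can lie on $\overline X_j$. This contradiction proves $\overline Y_j\neq\overline Y_1$.

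The main obstacle is the volume input rather than the algebra. One must know both that the differential of the volume along a character curve is the pullback of a $1$-form living on $\overline X(\p M)$ (so that equal boundary images force the volumes to differ by a constant) and that the maximal value $V$ is genuinely attained within a component rather than merely approached along an ideal point of the curve. These are exactly the contents of the rigidity and Neumann--Zagier theory assembled in \cite{H,CCGLS,D,F}. The hypothesis of part (3) enters precisely to place \emph{both} discrete faithful characters on $\overline X_1$; this is what makes the range of $\mathrm{Vol}$ on $\overline X_1$ the full interval $[-V,V]$ and thereby pins down $C=0$.
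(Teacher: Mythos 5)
Your overall strategy --- comparing volumes on $\overline X_1$ and $\overline X_j$ through their common boundary image, using that the differential of the volume is pulled back from a $1$-form on $\overline X(\p M)$ so that the two volume functions differ by a constant (the paper also allows a sign $\delta=\pm1$ here, which you drop but which does not affect the range argument), and using that both $\scz$ and $\t(\scz)$ lie on $\overline X_1$ to pin down the constant --- is exactly the skeleton of the paper's proof. The problem is your endgame. From ``$\sup_{\overline X_j}|v|$ is approached and equals $vol(M)$'' you conclude that the supremum is \emph{attained} at a character of $\overline X_j$, asserting that the maximizing sequence ``cannot escape to an ideal point and must subconverge, inside the closed set $\overline X_j$.'' But $\overline X_j$ is a closed affine curve, hence not compact, and none of the cited volume theory supplies the properness statement you need: the volume function extends continuously to the smooth projective model of the curve, so a priori the value $vol(M)$ could be attained only at an ideal point, which corresponds to no representation and to which the rigidity theorem (``$|v(\sc)|=vol(M)$ iff $\sc$ is discrete faithful'') does not apply. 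A continuous function bounded by $V$ on a non-compact set can approach $V$ without attaining it, so without ruling out escape to ideal points you have no contradiction. This is a genuine gap, and it is precisely the difficulty the paper is engineered to avoid.

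The paper's device is Thurston's hyperbolic Dehn filling. It takes fillings $M(\g_k)$ whose discrete faithful characters $\sck,\t(\sck)$ lie on $\overline X_1$ and converge to $\scz,\t(\scz)$, transports them to points $\sckp,\sckpp\in\overline X_j$ with the same boundary images, and uses the constant-difference relation to get $|v(\sckp)|+|v(\sckpp)|\ge 2\,vol(M(\g_k))$. It then shows (by an explicit argument with the peripheral holonomies) that $\sckp,\sckpp$ are characters of the \emph{closed} hyperbolic manifold $M(\g_k)$ but not discrete faithful ones, so the closed-manifold volume rigidity gives the strict inequalities $|v(\sckp)|,|v(\sckpp)|<vol(M(\g_k))$ --- a contradiction at honest, specific characters, with no limit over $\overline X_j$ and no ideal-point issue. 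If you want to salvage your version, you would either have to prove that $|v|$ on a component containing no discrete faithful character has supremum strictly less than $vol(M)$ (not available in \cite{H,CCGLS,D,F}), or replace your limiting step with the Dehn-filling argument.
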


\pf Suppose otherwise that $\overline Y_1=\overline Y_j$ for some $j\geq 2$. We will get a contradiction from this
assumption.
The argument goes by applying the theory of volumes of representations.

We first recall some of the results from \cite{D} concerning
volumes of representations. For any (connected) closed $3$-manifold $W$ and
any representation $\overline\r\in \overline X(W)$,  the volume $v(\overline\r)$ of $\overline \r$ is defined, and if in addition $\overline \r$ is irreducible,
the volume function $v$ descends down to defined on $\scrp$ so that
 $v(\scrp)=v(\overline \r)$.
 What's important in this theory is
the Gromov-Thurston-Goldman Volume Rigidity
 (proved in \cite{D} as Theorem 6.1), which states that when
 $W$ is a closed hyperbolic $3$-manifold and $\sc\in \overline X(W)$ is an irreducible character,
 then $|v(\sc)|=vol(W)$ iff  $\sc$ is a
 discrete faithful character.
For a hyperbolic knot manifold $M$,
 the volume function $v$
is well defined, in our current notation,  for  each
$PSL_2(\c)$-representation $\overline \r$ of $\pi_1(M)$ whose character $\scrp$ lies in
$\overline X_1$ (\cite[Lemma 2.5.2]{D}). Similarly   if $\scrp\in \overline X_1$ is an irreducible character, then  $v(\scrp)=v(\overline \r)$. So $v$ is defined at
all but finitely many points
of $\overline X_1$. Furthermore  if   $\overline Y_1^\nu$ is a normalization  of $\overline Y_1$
 and $f_1:  \overline Y_1\ra \overline Y_1^\nu$ a birational isomorphism,
  then the volume function $v$ factors through  $\overline Y_1^\nu$
    in the sense that there is a function $v_1:\overline Y_1^\nu\ra \mathbb R$
     such that if $\scrp \in \overline X_1$ is an irreducible character and
     if $f_1$ is defined at $\iota^*(\scrp)$, then
 $$v(\scrp )=v_1(f_1(\iota^*(\scrp ))).$$
That is, we have the following commutative diagram of maps (at points where all maps are defined):
$$
\begin{diagram}[height=2.5em,w=3em]
           & & && \overline Y_1^\nu     \\
 &&&\ldTo(4,2)^{v_1}&\uTo^{f_1}\\
  \mathbb R&\lTo^{\;\;\;\;\;v}&\overline X_1 &\rTo^{\iota^*} & \overline Y_1.
\end{diagram}
$$
This is \cite[Theorem 2.6]{D}.
Moreover if $\scrp\in \overline X_1$ is an irreducible character
such that $\bar\r$ factors through the fundamental
group of a Dehn filling $M(\g)$ of $M$ for some slope $\g$ on $\p M$,
then the volume of $\overline \r$ with respect to $M $
is equal to the volume of $\overline \r$ with respect to the closed manifold
$M (\g)$ (\cite[Lemma 2.5.4]{D}). We note that in the above cited results of \cite{D}
the volume $v(\bar\r)$ is the absolute value of the integral over $M$ (or $M(\g)$)
of certain $3$-form associated to $\bar\r$ but all these results remain valid
  when $v(\bar\r)$ is defined to be the mentioned integral without taking the absolute value. It is this latter version of volume function that we are using here and
 subsequently.

In \cite{F}, \cite[Lemma 2.5.2]{D} is generalized
and it is showed there that the volume function $v$ is well defined at every $PSL_2(\c)$-representation
of a finite volume   hyperbolic $3$-manifold, and
also in \cite{F} the volume rigidity is extended
to all hyperbolic link manifolds,  which states that the volume of a representation
of a hyperbolic link manifold attains its maximal value in absolute value precisely when the representation is
discrete faithful and the maximal value in absolute value is the volume of
the hyperbolic link manifold.
That means that  in our current case, the volume function $v$ is defined
at any irreducible character of $\overline X(M )$ without
the restriction that the character lies in a $\BC$-component of $\overline X(M )$
which contains a discrete faithful character.
We should also note that the definition of the volume of a representation
defined in \cite{F} is consistent with that defined in \cite{D}
in case of a knot manifold. More specifically for a
knot manifold $M$ and a representation $\bar \r\in \overline{R}(M)$,
the volume $vol(\bar\r)$ of $\bar\r$ is defined
through a so called pseudodeveloping map for $\bar \r$ which is
 defined in \cite{D} and
   the independence of $vol(\bar\r)$ from the choice of  the
pseudodeveloping map   is  proved in \cite{D} when
$\chi_\r$ is contained in a $\c$-component of $\oX(M)$
which contains a discrete faithful character and
proved in \cite{F} without any restriction.
One can then  check that the results of \cite{D} which we recalled in
the preceding paragraph can be extended to the following theorem.

\begin{thm}\label{volume theorem}
 (1)   If   $\overline Y_j^\nu$ is a normalization  of $\overline Y_j$
 and $f_j:  \overline Y_j\ra \overline Y_j^\nu$ is a birational isomorphism,
  then there is a function $v_j:\overline Y_j^\nu\ra \mathbb R$
    which makes the following  diagram of maps commutes
    (at points where all the maps are defined):
$$
\begin{diagram}[height=2.5em,w=3em]
           & & && \overline Y_j^\nu     \\
 &&&\ldTo(4,2)^{v_j}&\uTo^{f_j}\\
  \mathbb R&\lTo^{\;\;\;\;\;v}&\overline X_j &\rTo^{\iota^*} & \overline Y_j
\end{diagram}$$

(2) If $\sc\in \overline X(M)$ is an irreducible character which factors through
a Dehn filling $M(\g)$, i.e. $\sc\in \overline X(M(\g))$, then the volume of
$\sc$ with respect to $M$ is the same volume with respect  to $M(\g)$.
If in addition that $M(\g)$ is hyperbolic, then
$|v(\sc)|=vol(M(\g))$ iff $\sc$ is a discrete faithful character of
$M(\g)$.

(3) For any irreducible character  $\sc\in\overline X(M)$,
 $|v(\sc)|\leq vol(M)$, and the equality attains exactly at
  the two discrete faithful characters of $M$.
 \end{thm}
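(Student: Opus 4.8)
The plan is to assemble the statement from the two bodies of work recalled above, assigning the genuine analytic content to Francaviglia's rigidity for cusped manifolds \cite{F} and to \cite[Theorem 6.1]{D} for closed manifolds, and treating parts (1) and (2) as factoring and transport statements built on top of these. The preliminary that makes everything fit together, already established in the paragraph preceding the theorem, is that Francaviglia's volume function \cite{F} is defined at \emph{every} irreducible character of $\overline X(M)$ and agrees with Dunfield's pseudodeveloping-map volume \cite{D} wherever both are defined; I would invoke this compatibility at each step rather than re-prove it. I would prove part (3) first, since it carries the essential content: because $v$ is defined at every irreducible $\sc\in\overline X(M)$, Francaviglia's extension of volume rigidity to finite-volume hyperbolic link manifolds applies directly to $M$ and yields $|v(\sc)|\le vol(M)$ with equality precisely at the discrete faithful characters, while Mostow--Prasad rigidity guarantees there are exactly two of these.

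For part (2), the identity between the volume of $\sc$ computed relative to $M$ and relative to the Dehn filling $M(\g)$ is \cite[Lemma 2.5.4]{D}; Francaviglia's extension removes the restriction that $\sc$ lie in the distinguished component $\overline X_1$, so the identity applies to any irreducible $\sc\in\overline X(M(\g))$. Once $\sc$ is regarded as a character of the closed manifold $M(\g)$, the equivalence that $|v(\sc)|=vol(M(\g))$ holds if and only if $\sc$ is discrete faithful is the Gromov--Thurston--Goldman Volume Rigidity \cite[Theorem 6.1]{D} applied to the closed hyperbolic $3$-manifold $M(\g)$.

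For part (1), I would run Dunfield's proof of \cite[Theorem 2.6]{D} essentially verbatim, but with $\overline X_j$ in place of $\overline X_1$. That argument needs only two inputs: first, that $v$ is defined and continuous at all but finitely many points of $\overline X_j$, which is now supplied by \cite{F} even when $j\ge 2$, in which case $\overline X_j$ contains no discrete faithful character; and second, that $\iota^*:\overline X_j\to\overline Y_j$ is a birational isomorphism, which is Theorem \ref{distinct}(2). Since $\iota^*$ is generically one-to-one, the composite $f_j\circ\iota^*:\overline X_j\to\overline Y_j^\nu$ is again generically one-to-one, so setting $v_j:=v\circ(f_j\circ\iota^*)^{-1}$ on a dense open subset of $\overline Y_j^\nu$ defines a function that extends across the finitely many exceptional points and makes the diagram commute.

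The main obstacle I anticipate is one of justification rather than of a single hard computation: one must be sure that Francaviglia's volume function, defined with no reference to the distinguished component, really does factor through the boundary character on each $\overline X_j$. The point to stress is that it is the birationality of $\iota^*$ from Theorem \ref{distinct}(2) that forces this factoring, and emphatically \emph{not} any Galois equivariance --- indeed $v$ is a transcendental quantity that Galois conjugation does not preserve, and the Galois-conjugate components $\overline X_j$ with $j\ge 2$ generically carry volumes different from that of $\overline X_1$. This last point is precisely what will make the theorem usable in the proof of Proposition \ref{different}.
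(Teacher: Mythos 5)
Your handling of parts (2) and (3) matches the paper's: part (3) is Francaviglia's extension of volume rigidity to cusped hyperbolic manifolds, and part (2) is \cite[Lemma 2.5.4]{D} (freed from the restriction to the distinguished component by \cite{F}) together with the Gromov--Thurston--Goldman rigidity \cite[Theorem 6.1]{D} applied to the closed manifold $M(\gamma)$. For part (1), however, you take a genuinely different route. The paper reruns Dunfield's proof of \cite[Theorem 2.6]{D} on $\overline X_j$, and the one new ingredient it needs is that $\overline X_j$ lifts to a curve in $X(M)$; this is supplied in Remark \ref{rem.lift} by lifting $\overline X_1$ (Thurston) and transporting the lift by the Galois automorphism $\phi_j$. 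You instead observe that, since Theorem \ref{distinct}(2) has already established that $\iota^*:\overline X_j\to\overline Y_j$ is birational, the composite $f_j\circ\iota^*$ is generically one-to-one and one may simply set $v_j:=v\circ(f_j\circ\iota^*)^{-1}$ on a dense open set. This does prove the literal statement of part (1) --- the statement only asserts existence of a function making the diagram commute where everything is defined --- and your closing remark that it is birationality, not Galois equivariance, that forces the factoring (volume being transcendental and not Galois-invariant) is exactly the right thing to emphasize. The caveat is that your construction delivers strictly less than what the paper extracts from Dunfield's argument and then uses in the proof of Proposition \ref{different}: there the authors need that $v_j\circ f_j$ is smooth away from finitely many points with differential equal, up to sign, to the $1$-form $\omega'$ coming from $\overline X(\partial M)$, so that $v_j\circ f_1=\delta(v_1\circ f_1)+c$ on $\overline Y_1=\overline Y_j$. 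A purely set-theoretic $v_j$ gives no control of this kind, so if you intend your part (1) to feed into the volume-comparison argument you must still run Dunfield's Schl\"afli-type computation on $\overline X_j$ (for which the lift of Remark \ref{rem.lift} is the point you omitted); your opening sentence says you would do this ``verbatim,'' but the construction you actually write down bypasses it. Also, your claim that the function ``extends across the finitely many exceptional points'' is unnecessary for the statement and would need justification if meant continuously.
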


\begin{rem}\label{rem.lift}
{\rm For   the proof of part (1) of the theorem, following that of \cite[Theorem 2.6]{D}, one needs the property that the curve $\overline X_j\subset \overline X(M)$ lifts to a curve in $X(M)$.
But that follows from the fact that $\overline X_1$ lifts (by Thurston)
to a curve  in $X(M)$, say $X_1$,  and then $\phi_j(X_1)$ is a lift of $\overline X_j=\phi_j(\overline X_1)$.}
\end{rem}

We now continue  to prove Proposition \ref{different}.
Take a sequence of distinct slopes $\{\g_k\}$  in  $\p M $,
 and let
$M (\g_k)$ be the closed $3$-manifold obtained by  Dehn filling  $M $ with the slope $\g_k$.
By Thurston's hyperbolic Dehn filling Theorem,
we may assume that $M (\g_k)$ is hyperbolic and that the core circle of the filling solid torus is a geodesic,   for each  $k$.
Note that $\overline X(M (\g_k))\subset \overline X(M )$ for each $k$.
Also for each $k$,  $\overline X(M (\g_k))$
contains precisely two discrete faithful characters, which we denote by
$\sck$ and $\t(\sck)$.
Again by Thurston's hyperbolic Dehn filling theorem,
we may assume that $\sck\ra \scz$ in $\overline X(M )$ with respect to the classical topology
of $\overline X(M )$, up to replacing $\sck$ by $\t(\sck)$ for some $k$'s.
It follows that $\sck$ is contained in $\overline X_1$ for all sufficiently large $k$.

Note that for any irreducible character $\sc\in \overline X(M)$,
$v(\sc)=-v(\t(\sc))$ (see, e.g. \cite[Proposition 4.16]{F}).
Without loss of generality we may assume that $v(\scz)=vol(M)>0$
and so $v(\t(\scz))=-vol(M)$.
It follows that $v(\sck)=vol(M(\g_k))>0$ and $v(\t(\sck))=
-vol(M(\g_k))<0$, at least  for all sufficiently large $k$.

As $\scz$ and $\t(\scz)$ are smooth points of $\oX(M)$,
it follows that $\t(\overline X_1)=\overline X_1$
and that $\t(\sck)$ approaches $\t(\scz)$ as $k\ra \infty$ since $\t$ is
a continuous map.
Therefore $\overline X_1$ is the only $\BC$-component of $\overline X(M)$ which contains
$\sck$ and $\t(\sck)$ for all sufficiently large $k$.

 Since  $\overline Y_1=\overline Y_j$ and the map $\iota^*:\overline X_j\ra \overline Y_1$
  is an almost onto map, there are two sequences of points  $\{\sckp\}$ and $\{\sckpp\}$
  in $\overline X_j$
  such that   $\iota^*(\sckp)=\iota^*(\sck)$ and $\iota^*(\sckpp)=\iota^*(\tau(\sck))$ for almost all  $k$.
  We may also assume that $\sckp$ and $\sckpp$ are irreducible for almost all $k$
 since there are at most finitely many reducible characters in
 $\overline X_j$.

Let $\overline Y_1^\nu$ be a normalization of $\overline Y_1$, and let $f_1:
\overline Y_1\ra \overline Y_1^\nu$
be a birational isomorphism.   As $f_1$ is defined on $\overline Y_1$ except for possibly finitely many points,
we may assume that $f_1$ is well defined at $\iota^*(\sckp)=\iota^*(\sck)$ and $\iota^*(\sckpp)=\iota^*(\tau(\sck))$ for  all large $k$.

Let $v_1$ and $v_j$ be the functions on $\overline Y_1^\nu$ provided by
part (1) of Theorem \ref{volume theorem} with respect to the map $f_1:\overline Y_1\ra \overline Y_1^\nu$.
Note that away from a finitely many points in $\overline Y_1$,  $v_1\circ f_1$ and $v_j\circ f_1$ are smooth functions and have the same differential, up to sign.
(cf. the proof of \cite[Theorem 2.6]{D} for this assertion. Briefly, on 
$(\c^\times)^2$ there is a real
valued $1$-form: $$\o=-\frac{1}{2}(\log |\sl|d\; arg(\sm)-\log|\sm|d\;arg(\sl))$$
which is defined in \cite{CCGLS}. This $1$-from is invariant under 
the involutions $\s$ and $\e_1^*$ on $(\c^\times)^2$ defined in Subsection \ref{sec.Apoly} and thus descends to
a $1$-form $\o'$ on $\oX(\p M)$. For each $j$, $d(v_j\circ f_j)$ is equal to the restriction of $\o'$  over an open dense subset of $\overline Y_j$, up to sign.)
 It follows that $$v_j\circ f_1=\d (v_1\circ f_1)+c$$ for some $\d\in\{1,-1\}$ and  some constant $c$,  in the complement of finitely many points in $\overline Y_1$.
Let $U$ denote this complement.
Then we may assume that $\iota^*(\sckp)=\iota^*(\sck)$ and $\iota^*(\sckpp)=\iota^*(\tau(\sck))$ are contained in $U$ for all large $k$.

Hence  $$v(\sckp)=v_j(f_1(\iota^*(\sckp)))=\d v_1(f_1(\iota^*(\sck)))+c=\d v(\sck)+c$$
and  $$v(\sckpp)=v_j(f_1(\iota^*(\sckpp)))=\d v_1(f_1(\iota^*(\tau(\sck))))+c=\d v(\tau(\sck))+c.$$
So $$v(\sckp)-v(\sckpp)=
\d [v(\sck)- v(\tau(\sck))]=\d 2v(\sck)=\d 2vol(M(\g_k))$$
and thus
$$|v(\sckp)|+|v(\sckpp)|\geq  2vol(M(\g_k))$$
for sufficiently large $k$.
Because $\iota^*(\sckp)=\iota^*(\sck)$ and $\iota^*(\sckpp)=\iota^*(\tau(\sck))$,  $\sckp$ and
$\sckpp$ are both characters of $\overline X(M (\g_k))$.
To see this in detail, let $\overline \r_k, \overline \r_k' \in \overline R(M)$ be representations
with $\sck$ and $\sckp$ as characters respectively.
Note that $\overline \r_k$ is a discrete faithful representation of $\pi_1(M(\g_k))$
and so  $\overline \r_k(\g_k)=1$. Let $\eta_k$ be a simple essential loop in $\p M$ such that
$\{\g_k,\eta_k\}$ form a basis of $\pi_1(\p M)$. Then $\eta_k$ is isotopic in $M(\g_k)$ to the
core circle of the filling solid torus in forming $M(\g_k)$ from $M$.
As we have assumed that the core circle is a geodesic in the hyperbolic $3$-manifold
$M(\g_k)$, $\overline \r_k(\eta_k)$ is a hyperbolic element of $PSL_2(\c)$. In particular its trace
square is not equal to $4$.
Now since $\sckp(\g_k)=\sck(\g_k)$ and $\sckp(\eta_k)=\sck(\eta_k)$, $\overline \r_k'(\g_k)$
is a parabolic element or the identity element and $\overline \r_k'(\eta_k)$ is a hyperbolic element of $PSL_2(\c)$.
But these two elements commute, $\overline \r_k'(\g_k)$ has to be the identity element.
Hence $\sckp\in \overline X(M(\g_k))$. Similarly one can show that
$\sckpp\in \overline X(M(\g_k))$.
But neither $\sckp$ nor $\sckpp$ is a discrete faithful character of $\overline X(M (\g_k))$
by our construction, we get a contradiction with
the volume rigidity theorem for closed hyperbolic $3$-manifolds.
\qed

To finish the proof of  Theorem \ref{distinct} (3), we jus need to show that $\overline Y_j$,
$j\geq 2$ are mutually distinct.
Suppose  that
 $\overline Y_{j_1}=\overline Y_{j_2}$ for some $j_1, j_2\geq 2$.
 There is $\phi\in \Aut(\c)$ such that
 $\phi(\overline X_{j_1})=\overline X_{1}$.
 As $\phi$ commutes with $\iota^*$,
 $\phi(\overline Y_{j_1})=\overline Y_{1}$.
 We also have $\phi(\overline Y_{j_2})=
 \phi(\overline Y_{j_1})=\overline Y_1$.
  So by Proposition \ref{different},
 $\phi(\overline X_{j_2})=\overline X_1$ as well.
Hence $\overline X_{j_1}=\overline X_{j_2}$, i.e.  $j_1=j_2$.

\def\oY{\overline Y}
\subsection{A refinement of Theorem \ref{distinct}} Let $M$ be a hyperbolic knot manifold.
 Suppose $\oX_1,\dots,\oX_k$ are all $\c$-components of
$\oX(M)$ and  $\oY_j$ is the Zariski closure of $\iota^*(\oX_j)$ in $\oX(\partial M)$ for $i=1,\dots,k$. It is known that $\oY_j$ has dimension either 1 or 0.

In proving $\oY_j \neq \oY_1$ in the previous subsection, the fact that $\oX_j$ is in the $\Aut(\c)$-orbit of $\oX_1$ is used only to show that $\oX_j$ lifts to $X(M)$ (see Remark \ref{rem.lift}).
When $M$ is a hyperbolic knot manifold which is the exterior of a knot
in a homology $3$-sphere, every $PSL_2(\c)$-representation of $\pi_1(M)$ lifts to a $SL_2$-representation.
Hence, the proof of part (3) of Theorem \ref{distinct} also proves the following.
\begin{thm} \label{distinct2} Let $M$ be a hyperbolic knot manifold which is the exterior of a knot
in a homology $3$-sphere. Let $\overline X_1, \dots,\overline X_k$ be the $\c$-components of the $PSL_2$-character varieties $\overline X(M)$, and $\overline Y_j$ be the Zariski closure of $\iota^*(X_j)$, $j=1,\dots, k$.
Suppose the two discrete faithful characters of
$\overline X(M)$ are contained in  $\overline X_1$.
Then $\overline Y_j \neq \overline Y_1$ for all $j \ge 2$.
\end{thm}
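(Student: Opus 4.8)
The plan is to recognize that Theorem \ref{distinct2} asks for strictly less than Theorem \ref{distinct}(3) — there is no claim of mutual distinctness among the $\overline Y_j$, and the $\overline X_j$ are now arbitrary $\c$-components rather than Galois translates of $\overline X_1$ — and to show that the argument of Proposition \ref{different} already delivers this weaker conclusion once a single ingredient is reestablished. Reading that proof, the hypothesis that $\overline X_j$ lies in the $\Aut(\c)$-orbit of $\overline X_1$ enters in exactly one place, Remark \ref{rem.lift}: it is used only to guarantee that $\overline X_j$ lifts to a curve in the $SL_2(\c)$-character variety $X(M)$, and this lift is precisely what makes Theorem \ref{volume theorem}(1), the factorization of the volume function through a normalization $\overline Y_j^\nu$, applicable to $\overline X_j$. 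Thus the entire task reduces to supplying this lift for an \emph{arbitrary} $\c$-component under the homology-sphere hypothesis.

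First I would dispose of the degenerate case. Since $\overline X_1$ is a curve on which $\iota^*$ restricts to a birational isomorphism onto $\overline Y_1$ (Theorem \ref{distinct}(1)--(2)), $\overline Y_1$ is a curve; hence any $\overline Y_j$ with $\dim \overline Y_j = 0$ is automatically distinct from $\overline Y_1$. I may therefore assume $\overline Y_j$ is a curve and choose a curve $C \subseteq \overline X_j$ with $\iota^*(C)$ Zariski-dense in $\overline Y_j$. For the lift I would invoke the property recorded just before the theorem: since $M$ is the exterior of a knot in a homology $3$-sphere, every $PSL_2(\c)$-representation of $\pi_1(M)$ lifts to $SL_2(\c)$ (the lifting obstruction lies in $H^2(\pi_1(M);\z_2)\cong H^2(M;\z_2)\cong H_1(M,\partial M;\z_2)=0$, using that $M$ is aspherical and that the meridian generates $H_1(M;\z_2)$). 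Consequently the projection $X(M)\to\overline X(M)$ induced by $SL_2(\c)\to PSL_2(\c)$ is a surjective finite morphism, its fibers being a torsor under $H^1(M;\z_2)$, so the preimage of $C$ contains a curve of $X(M)$ mapping onto it — exactly the lift demanded by Remark \ref{rem.lift}.

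With the lifted curve $C$ in hand, the remainder is the argument of Proposition \ref{different} with $C$ playing the role of the curve $\overline X_j$, none of whose other steps refers to the provenance of the component. Assuming $\overline Y_j=\overline Y_1$, I would take the hyperbolic Dehn fillings $M(\g_k)$ with discrete faithful characters $\sck\to\scz$, so $\sck,\t(\sck)\in\overline X_1$ for large $k$; pull the generic boundary characters $\iota^*(\sck),\iota^*(\t(\sck))\in\overline Y_1=\overline Y_j$ back to irreducible characters $\sckp,\sckpp\in C$; apply the volume factorization of Theorem \ref{volume theorem}(1) (valid now that $C$ lifts to $X(M)$) together with the identification of $d(v_j\circ f_j)$ with the common $1$-form $\o'$ on $\overline X(\partial M)$ to obtain $v(\sckp)-v(\sckpp)=\pm\,2\,\mathrm{vol}(M(\g_k))$; and finally run the trace argument at the core geodesic of $M(\g_k)$ to conclude that $\sckp,\sckpp\in\overline X(M(\g_k))$ are non-discrete-faithful characters whose volumes then violate volume rigidity for the closed hyperbolic manifold $M(\g_k)$. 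This contradiction yields $\overline Y_j\neq\overline Y_1$.

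I expect the real content to be verification rather than new ideas: the single genuine input is the lifting property from the homology-sphere hypothesis, and after that I must check that each ingredient of Proposition \ref{different} beyond Remark \ref{rem.lift} — above all the descent of the volume function through $\overline Y_j^\nu$ and the identification of its differential with $\o'$ — requires only a lifted curve dominating $\overline Y_j$, and never that $\overline X_j$ be a Galois conjugate of $\overline X_1$ or a component of $\overline X^{\rg}(M)$. Granting the lift, this confirmation is routine, and no mutual-distinctness bookkeeping among the $\overline Y_j$ is needed since Theorem \ref{distinct2} does not assert it.
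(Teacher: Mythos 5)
Your proposal is correct and follows essentially the same route as the paper: the paper's own proof of Theorem \ref{distinct2} consists precisely of the observation that the Galois-conjugacy hypothesis in Proposition \ref{different} is used only through Remark \ref{rem.lift} to lift $\overline X_j$ to $X(M)$, and that for a knot exterior in a homology $3$-sphere every $PSL_2(\c)$-representation lifts to $SL_2(\c)$, so the same volume argument applies. Your additional care with the $\dim\overline Y_j=0$ case and with restricting to a curve $C\subseteq\overline X_j$ dominating $\overline Y_j$ is a harmless (and reasonable) elaboration of the same argument.
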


\subsection{Proof of  Theorem \ref{distinct3}}\label{e-action}
Let $(\m, \l)$ be the standard meridian-longitude
basis for $\pi_1(\p M )\subset \pi_1(M )$. Let $\BZ_2=\{1,-1\}
% \subset GL_1(\c)= \c^\times :=\c \setminus \{0\}
 $. Since $H^1(M,\BZ_2)=\BZ_2$, there is a unique non-trivial group homomorphism $\ve: \pi_1(M)\to \BZ_2$.
 %, which gives a complex 1-dimensional representation of $\pi_1$.
 One has $\ve(\mu)=-1$ and $\ve(\lambda)=1$.
The homomorphism $\e$ induces an involution $\e^*$ on $R(M )$ and on $X(M )$, defined by $\ve^*(\rho)(\g)= \ve(\g) \, \rho(\g)$ for $\rho\in R(M)$ and $\e^*(\scr)=\sce$
 for $\scr\in X(M )$.
\no{ The restriction of $\ve$ onto $\pi_1(\partial M) \subset \pi_1(M)$ similarly gives rise to an involution
 $\ve^*$ on $X(\partial M)$.}

Obviously $\e^*$ is a bijective
regular involution  on $X(M )$\no{ and on $X(\partial M)$}, and it is defined
over $\q$.
The quotient space of $X(M )$ \no{(respectively  $X(\partial M)$)} by this involution gives rise a regular map $\Phi^*$ from $X(M)$ \no{(respectively  $X(\partial M)$)} into $\overline X(M)$ \no{(respectively $\overline X(\partial M)$)}. Let $\Phi:SL_2(\c)\ra PSL_2(\c)$ be the canonical quotient
homomorphism. Then $\Phi^*$ is exactly the  map induced by $\Phi$.

On the other hand, since $H_1(M; \z_2)=\z_2$, every $PSL_2(\c)$-representation
$\overline \r$ of $M$ lifts to a $SL_2(\c)$ representation $\r$ of $M$ in the sense
that $\overline\r=\Phi\circ \r$ (cf. e.g. \cite[Page 756]{BZ}).
Hence $\Phi^*$ is an onto map on $X(M)$. \no{On $X(\partial M)$, $\Phi^*$ is also onto.}

Similarly if $\e_1:\pi_1(\p M)\ra \z_2=\{1,-1\}$ is the homomorphism defined by
$\e_1(\m)=-1$ and $\e_1(\l)=1$, it induces an involution $\e_1^*$ on $X(\p M)$.
Let $\Phi_1^*$ be the corresponding quotient map from $X(\p M)$
into $\overline X(\p M)$. Then $\Phi_1^*$ is also a regular and surjective map.
We have the following commutative diagrams of regular maps:
\be
\label{dia.1}
\begin{diagram}[height=2em,w=3em]
 X(M)&\rTo^{\iota^*}& X(\p M)\\
\dTo^{\Phi^*} &&\dTo_{\Phi^*_1}\\
  \overline X(M)&\rTo^{\iota^*}&\overline X(\p M)
\end{diagram}
\ee
and the upper $\iota^*$ satisfies the identity 
\begin{equation}\label{epsilon equivariance}
\e_1^*\circ\iota^*=\iota^*\circ \e^*.
\end{equation}
 In particular we have
\be\label{dia.2}
\begin{diagram}[height=2em,w=3em]
 X^{\rg}(M)&\rTo^{\iota^*}& Y\\
\dTo^{\Phi^*} &&\dTo_{\Phi^*_1}\\
  \overline X^{\rg}(M)&\rTo^{\iota^*}&\overline Y.
\end{diagram}
\ee
where $\overline Y$ is
the Zariski closure of $\iota^*(\overline X^{\rg}(M))$
and $ Y$ is
the Zariski closure of $\iota^*(X^{\rg}(M))$. All the varieties in Diagram \eqref{dia.2} are 1-equidimensional.  Each of $\Phi^*$ and $\Phi_1^*$ is a degree $2$ map, while the lower $\iota^*$ has degree $1$ by Theorem \ref{distinct}. It follows that  there are an open dense subset $(\oX^\rg)'$ of $\oX^\rg(M)$ and an open dense subset $\oY'$ of $\oY$ such that the restriction
$\iota^*:(X^{\rg})'\to\overline Y'$ is a bijection and
for every $x \in (\oX^\rg)'$ and $y\in \oY'$, $(\Phi^*)^{-1}(x)$ has exactly two distinct elements and so does $(\Phi_1^*)^{-1}(y)$.
Besides, $(X^\rg)':= (\Phi^*)^{-1}((\oX^\rg)')$ is open dense in $X^{\rg}(M)$ and $Y'= (\Phi^*_1)^{-1}(\oY')$ is open dense in $Y$. Suppose $x\in (\oX^\rg)'$, $y= \iota^*(x)$, $\{x_1, x_2\}= (\Phi^*)^{-1}(x)$, and
$\{y_1, y_2\}= (\Phi^*_1)^{-1}(y)$. The commutativity of the above diagram means $\iota^*(x_1)$ is one of $y_1, y_2$, say $\iota^*(x_1)=y_1$. Then the identity (\ref{epsilon equivariance}) implies $\iota^*(x_2)=y_2$. This shows that $\iota^*$ is a bijection from the open dense subset $(X^\rg)'$ of $X^\rg(M)$ onto the open dense subset $Y'$ of $Y$. Hence,
$\iota^*: X^{\rg}(M)\ra Y$ is a degree one map. Now Theorem \ref{distinct3} follows from Theorem \ref{distinct}.
\begin{remark}\label{r.equal}
 Let $M$ be a hyperbolic knot manifold which is the exterior of a knot
in a homology $3$-sphere. We saw in the above proof that $\Phi^*$ is surjective on $X(M)$.
 Since $(\Phi^*)^{-1}(\oX^\irr(M))= X^\irr(M)$ and $(\Phi^*)^{-1}(\oX^\rg(M))= X^\rg(M)$, we conclude that
 $X^\irr(M)= X^\rg(M)$ if and only in $\oX^\irr(M)= \oX^\rg(M)$.
\end{remark}
\subsection{$A$-polynomial and its symmetry} \label{sec.Apoly}
We  briefly recall the definition of the $A$-polynomial
for a knot $K$ in a homology $3$-sphere $W$, as defined in \cite{CCGLS}. Let $M$ be the exterior of $K$
and let $\{\m, \l\}$ be the standard meridian-longitude basis for $\pi_1(\p M)$.

\def\BCx{\BC^\times}
\def\BCxt{(\BC^\times)^2}
\def\GG{\langle \sigma, \ve_1^*\rangle}
\def\pr{\mathrm{pr}}
Let $\BCx = \BC\setminus \{0\}$ and  $\sigma:(\BCx)^2 \to (\BCx)^2$ be the involution defined by $\sigma(\sm,\sl)= (\sm^{-1}, \sl^{-1})$. We can identify $X(\partial M)$ with $\BCxt/\sigma$ as follows.
For $(\sm,\sl)\in \BCxt$ let $\chi_{(\sm,\sl)}\in X(\partial M)$ be the character of the representation
$$ \rho: \pi_1(\partial M)\to SL_2(\BC), \quad \displaystyle\r(\m)=\left(\begin{array}{cc}\sm&0\\0&\sm^{-1}
\end{array}\right), \displaystyle\r(\l)=\left(\begin{array}{cc}\sl&0\\0&\sl^{-1}
\end{array}\right).$$
\def\prs{\mathrm{pr}_\sigma}
Then the map $(\sm,\sl)\to \chi_{(\sm,\sl)}$ descends to an isomorphism from $\BCxt/\sigma$ onto $X(\partial M)$, which we use to identify $\BCxt/\sigma$ with $X(\partial M)$. Let $\prs: \BCxt \to \BCxt/\sigma\equiv X(\partial M)$ be the natural projection.

Let $\ve_1^*:\BCxt\to \BCxt$ be the involution defined by $\ve_1^*(\sm,\sl)=(-\sm,\sl)$. Then $\ve_1^*$ commutes with $\sigma$ and
descends to an involution of $\BCxt/\sigma$, which coincides with the $\ve_1^*$ of Section~\ref{e-action}.
%The involution $\ve^*: X(\partial M) \to X(\partial M)$ now has the form $\ve^*(\sm,\sl)=(-\sm,\sl)$.
 Thus, we can identify $\oX(\partial M)$ with $(\BCxt/\sigma)/\ve_1^*= \BCxt/\langle \sigma, \ve_1^*\rangle$, and $\Phi_1^*$ with the natural projection $\BCxt/\sigma \to \BCxt/\GG$. Here $\GG\cong \BZ_2 \times \BZ_2$ is the group generated by $\sigma$ and $ \ve_1^*$. Let $\pr:\BCxt\to \BCxt/\GG$ be the natural projection.

\def\cR{\mathcal R}
The involution $\sigma$ naturally induces an algebra involution, also denoted by $\sigma$, acting on the algebra $\BC[\sm^{\pm1}, \sl^{\pm1}]$. That is,
$ \sigma (P)(\sm,\sl)= P(\sm^{-1},\sl^{-1})$ for $P\in \BC[\sm^{\pm}, \sl^{\pm1}]$.
A polynomial $P\in \BC[\sm, \sl]$ is said to be {\em balanced} if $\sigma(P)= \delta \sm^a \sl^b P$ for certain $\delta \in \{-1,1\}$ and $a,b \in \BZ$.
For any subring $\cR\subset \BC[\sm,\sl]$, we say that $P\in \BC[\sm,\sl]$ is {\em balanced-irreducible in} $\mathcal R$ if $P\in \cR$ and $P$ is balanced but is not the product of two  non-constant  balanced polynomials in $\cR$.

\def\oX{\overline X}
\def\oXM{\overline X(M)}
\def\oY{{\overline Y}}
\def\oZ{{\overline Z}}

\def\Aut{\mathrm{Aut}}

Suppose $Z\subset X(\partial M)$ is 1-equidimensional variety. The Zariski closure $\tilde Z$ of $\prs^{-1}(Z)$ in $\BC^2$ is a 1-equidimensional variety. The ideal of all polynomials in $ \BC[\sm, \sl]$ vanishing on $\tilde Z$ is principal, and is generated by a polynomial $P_Z\in \BC[\sm, \sl]$, defined up to a non-zero constant factor. The $\sigma$-invariance of $\prs^{-1}(Z)$ implies that $P_Z$ is balanced. If $Z$ is $\BC$-irreducible, then $P_Z$ is balanced-irreducible in $\BC[\sm,\sl]$.
 If
 $Z$ is defined over $\q$, then one can choose $P_Z\in \BZ[\sm, \sl]$ and it is defined up to sign.   If $Z$ is $\BQ$-irreducible, then $P_Z$ is balanced-irreducible in $\BZ[\sm,\sl]$.

Similarly, suppose $\oZ\subset \oX(\partial M)$ is 1-equidimensional variety, one defines $P_\oZ\in \BC[\sm, \sl]$ as the generator of the ideal of all polynomials in $ \BC[\sm, \sl]$ vanishing on $\pr^{-1}(\oZ)$. The $\GG$-invariance of $\pr^{-1}(\oZ)$ implies that  $P_\oZ$ is balanced and  belongs to $ \BC[\sm^2, \sl]$. If $\oZ$ is $\BC$-irreducible, then $P_\oZ$ is balanced-irreducible in $\BC[\sm^2,\sl]$.
 If
 $\oZ$ is defined over $\q$, then one can choose $P_\oZ\in \BZ[\sm^2, \sl]$ and it is defined up to sign.   If $\oZ$ is $\BQ$-irreducible, then $P_\oZ$ is balanced-irreducible in $\BZ[\sm^2,\sl]$.

Now let $\oZ$ be the union of all one-dimensional $\BC$-components of the Zariksi closure of $\iota^*(\oX(M))$ in $\oX(\partial M)= \BCxt/\GG$. It is known that $\oZ$ is defined over $\BQ$. The polynomial
$P_\oZ \in \BZ[\sm^2,\sl]$ is the $A$-polynomial $\displaystyle A_{\sk, \sw}(\sm,\sl)$.
If $Z$ is the union of all one-dimensional $\c$-components of the Zariaski closure of $\iota^*(X(M))$ in $X(\partial M)$.
Then $Z=(\Phi^*)^{-1}(\oZ)$. Thus $P_Z=P_\oZ$ is the $A$-polynomial.

\begin{rem}To define the $A$-polynomial $A_{\sk,\sw}(\sm,\sl)$, one just needs to
consider in $SL_2(\c)$-setting, i.e. in terms of  $P_Z$, as how it's done in \cite{CCGLS}.
For our purpose (e.g. for a convenience in proving   Proposition \ref{bal-irred}) we also present  the same $A$-polynomial from $PSL_2(\c)$ point of view, i.e. in terms of $P_\oZ$.
\end{rem}

\subsection{Proof of Theorem \ref{degree from trace field}}

Since $M$ is the exterior of a knot $K$ in a homology $3$-sphere,
its trace field is equal to its invariant trace field.
Let $\sz$ be a discrete faithful character of $X(M)$.
It is proved in \cite{SZ} that
the $\Aut(\c)$-orbit of $\sz$ has $d$ distinct elements,
which we denote by $\szj$, $i=0,1,..., d-1$,
and $\e^*(\szj)$, $i=0,1,....,d-1$, is another set of $d$ distinct elements,
which is disjoint from the former set.
These $2d$ characters are obviously contained in
$X^{\rg}(M)$. Furthermore they are irreducible faithful characters whose values
on elements of $\pi_1(\p M)$ are $2$ or $-2$.

For $\g\in \pi_1(M)$, let $f_\g$ be the regular function on $X(M)$ defined
by $f_\g(\scr)=[trace(\r(\g))]^2-4$. Then by the discussion above,
for each peripheral element $\g\in \pi_1(\p M)$, $f_\g$ has at least
$2d$
zero points: $\szj, \e^*(\szj)$, $i=0,...,d-1$.

Now let $X_1, ..., X_l$ be the $\BC$-components of $X^{\rg}(M)$.
By \cite[Section 5]{BZ2}, $f_\g$ is non-constant on each $X_j$ for
every nontrivial element $\g\in \pi_1(\p M)$ and
 the degree of $f_\g$ on $X_j$ remains  the same for $j=1,\dots,l$.
It is shown in \cite{SZ} that
\be
\label{eq.11}
\displaystyle\sum_{j=1}^l\left. degree(f_\g)\right|_{X_j}\geq 2d.
\ee
 Perhaps  we need to note that
  $\displaystyle\left. degree(f_\gamma)\right|_{X_j}$
  is equal to  the Culler-Shalen norm
 of $\g\in \pi_1(\p M)$  defined by the curve $X_j$, and
  the inequality \eqref{eq.11} is  given in \cite{SZ} in terms
  the Culler-Shalen norm.

Let $P_j=P_{Y_j}\in \BC[\sm,\sl]$ (see the definition of $P_Z$ in Subsection \ref{sec.Apoly}), where $Y_j$ is the Zariski
closure of $\iota^*(X_j)$.
    Theorem \ref{distinct3} part (2) and \cite[Proposition 6.6]{BZ2}
together imply that the $\sm$-degree of $P_j(\sm,\sl)$
is equal to $\displaystyle\left. \frac 12 degree(f_\l)\right|_{X_j}$ and
the $\sl$-degree of $P_j(\sm,\sl)$
is equal to $\displaystyle\left. \frac 12 degree(f_\m)\right|_{X_j}$.
We note at this point that although the definition of $A$-polynomial
defined in \cite{BZ2} is a bit different from
that given in \cite{CCGLS}, when
the degree of the map $\iota^*|_{X_j}$ is one
the factor $P_j(\sm,\sl)$ contributed by $X_j$
 is the same polynomial either as defined in \cite{CCGLS} or as
 defined in \cite{BZ2}.
 Since we do have that $\iota^*|_{X_j}$ is a degree one map,
  \cite[Proposition 6.6]{BZ2} applies.

Moreover it follows from Theorem \ref{distinct3} part (3) that all
factors $P_j(\sm,\sl)$, $j=1,...,l$, are mutually distinct.
Hence the $\sm$-degree and the $\sl$-degree of $A_{\sk,\sw}(\sm,\sl)$
are  larger than or equal to  $\displaystyle\sum_{j=1}^l\left. \frac 12 degree(f_\l)\right|_{X_j}\;\;\;\mbox{and}\;\;\;
\displaystyle\sum_{j=1}^l\left. \frac 12 degree(f_\m)\right|_{X_j}$ respectively, which are bigger than or equal to $d$ by \eqref{eq.11}.
This completes the proof of Theorem \ref{degree from trace field}.

\subsection{$A$-polynomial and balanced-irreduciblity} The following will be used in the proof of Theorem \ref{AJ}.

\begin{prop}\label{bal-irred}
Suppose that $M$ is a hyperbolic knot manifold which is the exterior of a knot $K$
in a homology $3$-sphere $W$. Assume that  the two discrete faithful characters are in the same $\BC$-component of the $PSL_2(\BC)$-character varietiy $\oX(M)$ and
 $\oX^\irr(M)=\oX^{\rg}(M)$. Then the non-abelian $A$-polynomial
$\widehat A_{\sk,\sw}(\sm,\sl)$ is   non-constant, does not contain any $\sm$-factor or $\sl$-factor, and is  balanced-irreducible in
$\BZ[\sm^2,\sl]$.
\end{prop}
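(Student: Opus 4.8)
The plan is to identify the non-abelian $A$-polynomial $\widehat A_{\sk,\sw}$ with the polynomial $P_{\oY}$ attached to the restriction of the rational-geometric subvariety, and then to read off balanced-irreducibility from the $\q$-irreducibility of $\oY$. Write $\oY$ for the union of the one-dimensional $\BC$-components of the Zariski closure of $\iota^*(\oX^\rg(M))$ in $\oX(\p M)$, and let $\oY_j$ be the closure of $\iota^*(\oX_j)$, where $\oX_1,\dots,\oX_l$ are the $\BC$-components of $\oX^\rg(M)$. By Theorem~\ref{distinct}(1),(2) each $\oX_j$ is a curve and $\iota^*\colon \oX_j\to\oY_j$ is birational, so each $\oY_j$ is a curve. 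Under the hypothesis $\oX^\irr(M)=\oX^\rg(M)$ we have $\oX(M)=\oX^\redu(M)\cup\oX^\rg(M)$, so by the definition in Subsection~\ref{sec.Apoly} the variety $\oZ$ with $A_{\sk,\sw}=P_{\oZ}$ is the union of the one-dimensional components of $\overline{\iota^*(\oX^\redu(M))}$ and of $\oY$. Since $W$ is a homology $3$-sphere the longitude is null-homologous, so every abelian representation sends $\l$ to the identity; hence $\overline{\iota^*(\oX^\redu(M))}=\{\sl=1\}$, the curve cut out by $\sl-1$.

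The crux is to show that $\{\sl=1\}$ is not one of the $\oY_j$, so that $\oZ$ is the reduced union of the \emph{distinct} irreducible curves $\{\sl=1\},\oY_1,\dots,\oY_l$ and therefore $A_{\sk,\sw}=P_{\oZ}=(\sl-1)\,P_{\oY}$, giving $\widehat A_{\sk,\sw}=P_{\oY}$. First I note that $\oY$ is $\q$-irreducible: $\oX^\rg(M)$ is $\q$-irreducible, being the $\Aut(\c)$-orbit of $\oX_1$ (Subsection~\ref{G-action}), and since $\iota^*$ is defined over $\q$ it commutes with the $\Aut(\c)$-action, so $\oY$ is the $\Aut(\c)$-orbit of $\oY_1$ and hence a single $\q$-component. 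Now $\{\sl=1\}$ is defined over $\q$, hence fixed by $\Aut(\c)$; so if some $\oY_j$ equalled $\{\sl=1\}$, the whole orbit $\oY$ would equal $\{\sl=1\}$, forcing $\sl\equiv 1$ on $\oY_1$ and thus $\sl$ constant on $\oX_1$. This is impossible: $\oX_1$ lifts to the $SL_2(\c)$-curve $X_1$ (Remark~\ref{rem.lift}), on which the peripheral trace function $f_\l=\tr(\r(\l))^2-4$ is non-constant by \cite[Section~5]{BZ2}, so $\sl$ is non-constant. This contradiction rules out $\oY_j=\{\sl=1\}$.

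It remains to extract the three asserted properties of $P_{\oY}=\widehat A_{\sk,\sw}$. Balanced-irreducibility in $\BZ[\sm^2,\sl]$ is immediate from the $\q$-irreducibility of $\oY$ by the correspondence recorded in Subsection~\ref{sec.Apoly}. The polynomial is non-constant because $\oY$ is one-dimensional. Finally $P_{\oY}$ has no factor of $\sm$ (equivalently of $\sm^2$) and no factor of $\sl$: its zero locus is the Zariski closure in $\BC^2$ of $\pr^{-1}(\oY)\subset(\BC^\times)^2$, on which both $\sm$ and $\sl$ are nonzero, so this closure contains no component of $\{\sm=0\}$ or $\{\sl=0\}$, whence neither $\sm$ nor $\sl$ divides $P_{\oY}$.

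I expect the main obstacle to be the middle step, the clean identification $\widehat A_{\sk,\sw}=P_{\oY}$: one must account precisely for the abelian factor $\sl-1$ and, above all, exclude the degenerate possibility that a non-abelian restriction curve $\oY_j$ coincides with $\{\sl=1\}$. The $\q$-irreducibility of $\oY$ together with the non-constancy of the longitude trace function is exactly what makes this exclusion go through, and it is here that the hypotheses feeding Theorem~\ref{distinct} (the two discrete faithful characters lying in one $\BC$-component, used via the $\q$-irreducibility and birationality) are genuinely needed.
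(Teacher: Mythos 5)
Your overall route matches the paper's: identify $\widehat A_{\sk,\sw}$ with $P_{\oY}$ for the $\q$-irreducible curve $\oY$, and deduce balanced-irreducibility in $\BZ[\sm^2,\sl]$ from Theorem \ref{distinct} together with the correspondence recorded in Subsection \ref{sec.Apoly}. Your exclusion of the degenerate case $\oY_j=\{\sl=1\}$ (via transitivity of the $\Aut(\c)$-action on the $\oY_j$, the fact that $\{\sl=1\}$ is defined over $\q$, and non-constancy of $f_\l$ on $X_1$) is a legitimate variant of what the paper does; the paper instead observes that each $P_{X_j}$ has positive $\sm$-degree and positive $\sl$-degree --- extracted from Theorem \ref{distinct3}(2) and \cite[Section 5, Proposition 6.6]{BZ2} in the proof of Theorem \ref{degree from trace field} --- so that in particular $P_{X_j}\neq \sl-1$.

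There is, however, a genuine gap in your last step. The paper defines an $\sm$-factor (resp.\ $\sl$-factor) to be \emph{any} non-constant element of $\BZ[\sm]$ (resp.\ $\BZ[\sl]$), not merely the monomial $\sm$ (resp.\ $\sl$). Your argument --- that the Zariski closure of $\pr^{-1}(\oY)$ is the closure of a set lying in $(\BC^\times)^2$, so no component sits inside $\{\sm=0\}$ or $\{\sl=0\}$ --- only rules out divisibility by $\sm$ and by $\sl$; it says nothing about factors such as $\sm^2-1$ or $\sl+3$, whose zero loci are vertical or horizontal lines contained in $(\BC^\times)^2$. To close this you need that on every $\BC$-component of the closure of $\pr^{-1}(\oY)$ both coordinates $\sm$ and $\sl$ are non-constant, so that no component is a vertical or horizontal line and hence no irreducible factor of $P_\oY$ lies in $\BC[\sm]$ or in $\BC[\sl]$. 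This follows from the fact (which you invoke only for $\l$ and only on $\oX_1$) that $f_\gamma$ is non-constant on each $X_j$ for every nontrivial peripheral $\gamma$ \cite[Section 5]{BZ2}, combined with the birationality of $\iota^*\colon X_j\to Y_j$ from Theorem \ref{distinct3}(2); this is precisely the positive-degree statement the paper uses. Without it, the assertion that $\widehat A_{\sk,\sw}$ contains no $\sm$-factor or $\sl$-factor is not established.
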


Here an $\sm$-factor (resp. $\sl$-factor) means a non-constant element of $\BZ[\sm]$ (resp. $\BZ[\sl]$).

\def\tY{\tilde Y}

\pf Let $\oY$ be the Zariski closure of $\iota^*(\oX^\rg)$ in $\oX(\partial M)$.
Since $\oX$ is $\q$-irreducible, it follows  from  Theorem~\ref{distinct}
that $\oY$ is $\q$-irreducible. Therefore $P_\oY$ is
 balanced-irreducible in
$\BZ[\sm^2,\sl]$ (see Section \ref{sec.Apoly}).
 When $\oX^\irr(M)= \oX^\rg(M)$, $P_\oY$ is the whole $\widehat A_{\sk,\sw}(\sm,\sl)$. Hence $\widehat A_{\sk,\sw}(\sm,\sl)$ is balanced-irreducible in
$\BZ[\sm^2,\sl]$.

Let $X_1, \dots,X_l$ be the $\BC$-components of $X^\rg(M)$.
As pointed out in the proof of Theorem \ref{degree from trace field},  for $j=1,\dots,l$, both of the $\sm$-degree
and the $\sl$-degree of $P_{X_j}(\sm,\sl)$ are positive.
As $P_{X_j}$ is balanced-irreducible, it follows that $P_{X_j}$
cannot contain any  $\sm$-factor or $\sl$-factor.
In particular $P_{X_j}(\sm,\sl)\ne \sl-1$.
Hence $X^{\rg}(M)$ contributes the factor
$\widehat A_{\sk,\sw}(\sm,\sl)=A_{\sk,\sw}(\sm,\sl)/(\sl-1)$
which is a non-constant  and does not
contain any $\sm$-factor or $\sl$-factor.
\qed
\begin{remark}\label{a refinement} The above proof, combined with Theorem \ref{distinct2},  actually yields  the  following stronger statement. Suppose $M$ is a hyperbolic knot manifold
which is the exterior of a knot $K$ in a homology sphere $W$
such that the two $PSL_2(C)$ discrete faithful characters are contained in the same $\c$-component of
$\oX(M)$. Then $\widehat A_{K,W}(\sm,\sl)$ is balanced-irreducible in $\BZ[\sm^2,\sl]$ if and only if $X^\rg$ contains every
$\c$-component of $X(M$) whose image under $ \iota^*: X(M) \to X(\partial M)$ is one dimensional.
\end{remark}

\section{Representation schemes and character schemes}\label{sec.scheme}

\subsection{Reduced and essentially reduced schemes}\label{se.reduced}
Concerning the proof of Theorem \ref{AJ}, we need to consider the scheme counterparts of  the $SL_2(\c)$ representation variety and  character variety of a group $\G$.
Let's  first prepare   some   facts about an
affine scheme $\spec(R)$ for a ring $R$ of the form
$R=\c[x_1,...,x_n]/I$ where $I$ is a proper ideal of $\c[x_1,...,x_n]$.
The ideal $I$ admits an irredundant primary
decomposition, i.e.
$$I =\bigcap_{j=1}^{m}Q_j$$
for some positive integer $m$ such that each
$Q_j$ is a primary ideal and
$\sqrt{Q_i}\ne \sqrt{Q_j}$ for $i\ne j$.
 The radical $P_j =\sqrt{Q_j}$ is a prime ideal. Recall that $Q_j$ is called an isolated
component of $I$ if $P_j$ is minimum in the inclusion relation
among $P_1,...,P_m$, and if $Q_j$ is not isolated,
it is called an embedded component of $I$.
The set $\{P_1,...,P_m\}$ is uniquely determined by $I$, as well as the set of all isolated components $Q_j$ of $I$.
We may assume that $Q_j$, $j=1,...,k$, are the
isolated components of $I$.

Let $V(I)\subset \c^n$ be the  zero locus of an ideal $I\subset \c[x_1,...,x_n]$, which is a  variety. Note that $V(I)=V(\sqrt I)$.
The coordinate ring $\c[V]$  of $V=V(I)$ is given by $\c[V]=\c[x_1,...,x_n]/\sqrt I$
which is also equal to the quotient ring of $R=\c[x_1,...,x_n]/I$ divided by its nilradical $\sqrt{(0)}$, i.e.
$$\c[V]=R/\sqrt{(0)}.$$
The variety $V=V(I)$ can be naturally identified with
the set of closed points of the scheme $\spec(R)$.
The zero loci $V_j=V(Q_j)=V(P_j)$, $j=1,...,k$, are all irreducible $\BC$-components of $V$.
Let $R_j=\c[x_1,...,x_n]/Q_j$, $j=1,...,k$. Then
$\spec(R_j)$, for each $j=1,\dots,k$, is an  irreducible component  of $\spec(R)$, called the {\em component corresponding to $V_j$}.

Recall that a ring is called {\it reduced} if
  it does not contain any non-zero nilpotent
 elements.
 For the ring $R$ above, it is reduced if and only if $I=\sqrt I$.
Similarly the ring $R_j$
is reduced if and only if  $Q_j=\sqrt{Q_j}=P_j$ (or equivalently $R_j$ is an integral domain).
If all $R_j$, $j=1,...,k$, are reduced
(i.e. $Q_j=P_j$ for all isolated components of $I$), we call
the ring $R$ {\it essentially reduced}.
Correspondingly we call an affine scheme $\spec(R)$ reduced if its defining ring $R$ is reduced, and call it {\it essentially reduced} if each
irreducible component $\spec(R_j)$ of
$\spec(R)$ is reduced.

Let $\fm\in \spec(R)$ be a closed point, which
we shall also identify  with a maximal ideal of $R$
as well as with a point in $V$.
Let $T_\fm(\spec(R))$ denote the Zariski tangent space of the scheme
$\spec(R)$ at the point $\fm$ and let $R_\fm$ be the localization of $R$ at the maximal ideal $\fm$. Note that $R_\fm$  is a local ring
and is the stalk of the scheme $\spec(R)$ at the point $\fm$.
If the dimension of $T_\fm(\spec(R))$ is equal to the
(Krull) dimension of the local ring $R_\fm$, then $\fm$ is a {\it smooth point}
of the scheme $\spec(R)$ (called a regular point or a simple point in some textbooks), and the following
conclusions follow: $\fm$ is contained in a unique irreducible
component of $\spec(R)$, say $\spec(R_j)$, $R_\fm=(R_j)_\fm$ is an integral domain, which implies that $R_j$ is an integral domain and thus
 $R_j$ is reduced and $Q_j=P_j$ (see  e.g. \cite{M} \cite{S}).
We summarize this discussion into the following lemma
in a form that is more convenient  for us to apply.

\begin{lemma}\label{dim equailty imples reduceness}Let $R=\c[x_1,..,x_n]/I$ for a proper ideal $I$.
Let $m\in \spec(R)$
be a closed point and let $V_j$ be an irreducible component
of the variety $V=V(I)$ which contains $m$.
Suppose that $dim\; T_\fm(\spec(R))=dim\;V_j$, then
$\fm$ is a smooth point of the scheme $\spec(R)$,
$V_j$ is the unique irreducible component of $V$ which contains $\fm$,  and
the isolated component $Q_j$ of $I$ which defines $V_j$ is a prime ideal.
\end{lemma}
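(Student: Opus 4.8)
The plan is to reduce everything to standard commutative algebra about local rings and Zariski tangent spaces, following the discussion immediately preceding the lemma. The key observation is that the hypothesis $\dim T_\fm(\spec(R)) = \dim V_j$ is exactly what is needed to force $\fm$ to be a smooth point of the scheme, after which all the desired conclusions are already recorded in the paragraph above the lemma statement. So the first thing I would do is establish the inequality chain relating the relevant dimensions. For the local ring $R_\fm$, one always has the general inequality $\dim R_\fm \le \dim_\c T_\fm(\spec R)$, where the tangent-space dimension equals $\dim_\c (\fm R_\fm / \fm^2 R_\fm)$; equality of these two numbers is precisely the definition of $\fm$ being a regular (smooth) point of $\spec(R)$.

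The crux of the argument is to compare $\dim R_\fm$ with $\dim V_j$. First I would note that since $\fm \in V_j$ and $V_j$ is an irreducible component of $V = V(I)$, the point $\fm$ lies on $V_j$, so the Krull dimension of the local ring of the \emph{reduced} scheme (i.e. of $\c[V]$ localized at $\fm$) at $\fm$ is at least $\dim V_j$ (it is exactly $\dim V_j$ if $V_j$ is the unique component through $\fm$, and could only be larger if several components of higher dimension passed through, but dimension at a point on a component is bounded below by that component's dimension). Since $R$ surjects onto $\c[V] = R/\sqrt{(0)}$, and localization is exact, $\dim R_\fm \ge \dim (\c[V])_\fm \ge \dim V_j$. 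Combining with the tangent-space inequality above gives
\[
\dim V_j \;\le\; \dim R_\fm \;\le\; \dim_\c T_\fm(\spec R) \;=\; \dim V_j,
\]
where the final equality is the hypothesis. Hence all three quantities coincide, which forces $\dim R_\fm = \dim_\c T_\fm(\spec R)$, i.e.\ $\fm$ is a smooth point of $\spec(R)$.

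Once smoothness is established, the remaining conclusions follow verbatim from the paragraph preceding the lemma: a smooth point of $\spec(R)$ lies on a unique irreducible component $\spec(R_j)$, the local ring $R_\fm = (R_j)_\fm$ is a regular local ring hence an integral domain, and this propagates to show $R_j$ itself is an integral domain, so $R_j$ is reduced and the isolated primary component $Q_j$ equals its radical $P_j$ and is therefore prime. The uniqueness of the component through $\fm$ identifies it as $V_j$. I would cite the standard references (such as \cite{M} and \cite{S}) for the facts that a regular local ring is an integral domain and that smoothness is detected by the equality of tangent-space and Krull dimensions.

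The main obstacle I anticipate is the bookkeeping in the dimension comparison $\dim R_\fm \ge \dim V_j$: one must be careful that the Krull dimension of a local ring at a point on an irreducible component of dimension $\dim V_j$ is indeed at least $\dim V_j$, and that passing to the non-reduced ring $R$ (which may have embedded components or nilpotents) does not decrease this dimension. This is true because $\spec(R)$ and $\spec(\c[V]) = V$ have the same underlying topological space and the same irreducible components with the same dimensions, so localizing at $\fm$ sees the same chain of primes; nilpotents and embedded components only add primary components without lowering the dimension at $\fm$. Making this precise is the one genuinely substantive point, and I would handle it by working with the minimal prime $P_j \subset R$ corresponding to $V_j$ and noting $\dim R_\fm \ge \dim (R/P_j)_\fm = \dim V_j$ since $P_j$ is contained in $\fm$.
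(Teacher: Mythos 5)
Your argument is correct and follows essentially the same route as the paper's proof: both establish the chain $\dim V_j \le \dim R_\fm \le \dim_\c T_\fm(\spec R)$, use the hypothesis to force equality (hence regularity of $R_\fm$), and then invoke the standard consequences recorded in the discussion preceding the lemma. The only cosmetic difference is that you bound $\dim R_\fm$ from below via $R/P_j$ (equivalently $\c[V]$) where the paper uses $R_j=\c[x_1,\dots,x_n]/Q_j$; these have the same dimension, so the arguments coincide.
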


 \pf Let $Q_j$ be the isolated component of $I$
 which defines $V_j$
 and let $R_j=\c[x_1,...,x_n]/Q_j$.
 Then $m\in \spec(R_j)\subset \spec(R)$.
  As we always have  $$dim\;T_\fm(\spec(R))\geq dim\;R_\fm
 =dim\;(R_j)_\fm=dim\;R_j=dim\;R_j/\sqrt{(0)}=dim\;V_j,$$ the assumption
 $dim\;T_\fm(\spec(R))=dim\;V_j$ implies the
 equality $dim\;T_\fm(\spec(R))=dim\;R_\fm$ and thus all the conclusions of the lemma
 follow from the discussion preceding the lemma.
 \qed

\begin{remark}\label{orbit reduce}Recall that every element $\phi\in \Aut(\c)$ induces an action on
$\c[x_1,...,x_n]$. If in the above lemma the ideal $I$ is defined over $\mathbb Q$,  then every element $\phi\in \Aut(\c)$
will keep $I$ invariant, sending isolated components  of $I$
to isolated components, and sending scheme reduced components
$Q_j$ (i.e. $Q_j=\sqrt{Q_j}$ is prime) of $I$ to scheme reduced components.
Hence the $\Aut(\c)$-orbit of a scheme reduced isolated component of $I$
is a set of scheme reduced isolated components of $I$ whose intersection is an ideal defined over $\mathbb Q$.
\end{remark}

\subsection{Character scheme}
Given a finitely presented group $\G$, let $\fA(\G)$ be {\it the universal $SL_2(\c)$  representation ring
of $\G$}, which is a finitely generated $\BC$-algebra (as given by \cite[Proposition 1.2]{LM}, replacing $GL_n$ there by $SL_2$ and $k$ there by $\c$).
\no{Then $\fA(\G)$ is a quotient ring of $\c[x_1,..., x_{4g}]$ for some integer $g>0$ by some ideal $I$ defined over $\q$,
i.e. $$\fA(\G)=\c[x_1,..., x_{4g}]/I,$$ which can be obtained from a presentation of $\G$ (with $g$ generators)
but is independent of the choice of the presentation (as explained in the proof of
\cite[Proposition 1.2]{LM}).}
The $SL_2(\c)$ {\it representation scheme} $\fR(\G)$ of $\G$ is defined to be the scheme $\spec(\fA(\G))$, i.e. $\fR(\G)=\spec(\fA(\G)).$
The set of closed points of $\fR(\G)$ can be identified with  the $SL_2(\c)$  representation variety $R(\G)$ of $\G$.
The coordinate ring $\c[R(\G)]$ of $R(\G)$ can be obtained
as  the quotient  of  $\fA(\G)$ by its nilradical $\sqrt{(0)}$,
i.e. $$\c[R(\G)]=\fA(\G)/\sqrt{(0)}.$$

Induced by  the matrix conjugation, the group $SL_2(\c)$ acts naturally on $\fA(\G)$.
Let $$\fB(\G)=\fA(\G)^{SL_2(\c)}$$ be the subring of invariant
 elements of $\fA(\G)$ under this action, which is finitely-generated as a $\BC$-algebra (by the Hilbert-Nagata theorem).
Then $\fB(\G)$ is called {\it the universal $SL_2(\c)$ character ring of $\G$} and
the scheme  $$\fX(\G):=\spec(\fB(\G))$$ is called {\it the $SL_2(\c)$ character scheme of $\G$}.
\no{ Note that $\fB(\G)$ is isomorphic to a ring of the form
 $\c[x_1,...,x_n]/J$ for some integer $n>0$ and an ideal $J\subset \c[x_1,...,x_n]$.
 }
The set of closed points of  $\fX(\G)$ can be identified with the character variety $X(\G)$ of
$\G$ and the coordinate ring $\c[X(\G)]$ of $X(\G)$ is $\fB(\G)$ divided by
its zero radical, i.e.
$$\c[X(\G)]=\fB(\G)/\sqrt{(0)}.$$

Let $\r\in \fR(\G)=\spec(\fA(\G))$ be a closed point. Then identified
as a point in $R(\G)$, $\r:\G\ra SL_2(\c)$ is a $SL_2(\c)$ representation of $\G$. Similarly the character $\scr\in X(\G)$ of $\r\in R(\G)$ shall  also be considered as a closed point in the character scheme $\fX(\G)=\spec(\fB(\G))$.
  Let $sl_2(\c)$ be the Lie algebra of $SL_2(\c)$, $Ad:SL_2(\c)\ra \Aut(sl_2(\c))$ the
adjoint representation, and $sl_2(\c)_\r$  the $\G$-module $sl_2(\c)$  given by $Ad\circ\r:\G\ra \Aut(sl_2(\c))$.
Then a fundamental observation made in  \cite{W} states that
 the space of group 1-cocycles $Z^1(\G, sl_2(\c)_\r)$ of $\G$
 with coefficients  in $sl_2(\c)_\r$ is naturally isomorphic to the Zariski tangent space $T_\r(\fR(\G))$ of the scheme $\fR(\G)$ at the point $\r$,
and when $\r$ is an irreducible representation and is a smooth point of
$\fR(\G)$, the group $1$-cohomology $H^1(\G, sl_2(\c)_\r)$ is isomorphic to the Zariski tangent space $T_{\scr}(\fX(\G))$
of the scheme $\fX(\G)$ at the point $\scr$ (cf \cite{LM}).

For a compact manifold $W$ we use $\fA(W)$, $\fB(W)$,  $\fR(W)$ and $\fX(W)$ to denote $\fA(\pi_1(W))$, $\fB(\pi_1(W))$, $\fR(\pi_1(W))$ and $\fX(\pi_1(W))$ respectively. When $M$ is a hyperbolic
knot manifold, let $\fX^{\rg}(M)\subset \fX(M)=\spec(\fB(M))$ be the counterpart of $X^{\rg}(M)\subset X(M)$, that is, $\fX^{\rg}(M)$ is the union of the components of $\fX(M)$ corresponding to the $\BC$-components of $X^\rg(M)$.

\begin{prop}\label{canonical comps are reduced}Let $M$ be a hyperbolic
knot manifold. Then  $\fX^{\rg}(M)$ is essentially reduced.
\end{prop}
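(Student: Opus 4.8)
The plan is to find on each $\BC$-component of $X^{\rg}(M)$ a closed point at which the character scheme $\fX(M)$ is smooth with one-dimensional Zariski tangent space, invoke Lemma~\ref{dim equailty imples reduceness} to deduce reducedness of that component, and then propagate reducedness across the whole $\Aut(\c)$-orbit of components by Remark~\ref{orbit reduce}. The points to use are the discrete faithful characters: by definition they lie in $X^{\rg}(M)$, and by Thurston \cite[Proposition 1.1.1]{CGLS} together with \cite[Corollary 3.28]{P} each one lies on a curve and is a smooth point of the variety $X(M)$.

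The key computation is at a discrete faithful character $\sz$, the character of a discrete faithful representation $\r_0$. Since $\r_0$ is irreducible, $H^0(\pi_1(M), sl_2(\c)_{\r_0})=0$, so $\dim Z^1(\pi_1(M), sl_2(\c)_{\r_0})=\dim H^1(\pi_1(M), sl_2(\c)_{\r_0})+3$. For a one-cusped finite-volume hyperbolic manifold the deformation theory of the complete structure gives $\dim_\c H^1(\pi_1(M), sl_2(\c)_{\r_0})=1$, so that $\dim T_{\r_0}\fR(M)=\dim Z^1=4$. On the other hand the Krull dimension of $\fR(M)$ at $\r_0$ agrees with the local dimension of the underlying variety $R(M)$, which for an irreducible representation equals $\dim_{\sz}X(M)+3=1+3=4$ (the conjugation orbit being $3$-dimensional). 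Matching these shows $\r_0$ is a smooth point of the representation scheme $\fR(M)$; hence, as recalled just before the statement, $T_{\sz}\fX(M)\cong H^1(\pi_1(M), sl_2(\c)_{\r_0})$ is one-dimensional.

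Now if $X_j$ is the $\BC$-component of $X^{\rg}(M)$ containing $\sz$, then $\dim T_{\sz}\fX(M)=1=\dim X_j$, so Lemma~\ref{dim equailty imples reduceness} applies: the isolated primary component of the defining ideal of $\fB(M)$ cutting out $X_j$ is prime, i.e. the component of $\fX(M)$ corresponding to $X_j$ is reduced. To cover all components, note that every $\BC$-component of $X^{\rg}(M)$ lies in a $\q$-component, which by the definition of the rational-geometric subvariety contains a discrete faithful character and hence, by the previous step, contains a reduced $\BC$-component. Since $X^{\rg}(M)$ is defined over $\q$, the $\Aut(\c)$-orbit of any of its $\BC$-components is exactly the set of $\BC$-components of the $\q$-component containing it; and because $\fB(M)$ is defined over $\q$, Remark~\ref{orbit reduce} shows that this entire orbit consists of reduced components. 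Thus every $\BC$-component of $\fX^{\rg}(M)$ is reduced, which is the assertion.

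The main obstacle I expect is the scheme-theoretic input of the second paragraph: one must verify that $\r_0$ is a smooth point of the representation scheme $\fR(M)$ --- not merely of the reduced variety $R(M)$ --- and establish $\dim_\c H^1(\pi_1(M), sl_2(\c)_{\r_0})=1$. Once the scheme tangent space at $\sz$ is pinned to dimension one, the passage through Lemma~\ref{dim equailty imples reduceness} and the spreading of reducedness via Remark~\ref{orbit reduce} are routine.
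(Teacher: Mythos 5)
Your proposal is correct and follows essentially the same route as the paper: both compute $\dim Z^1(\pi_1(M), sl_2(\c)_{\r_0})=4$ and $\dim H^1(\pi_1(M), sl_2(\c)_{\r_0})=1$ at a discrete faithful character, apply Lemma~\ref{dim equailty imples reduceness} first to $\fR(M)$ and then to $\fX(M)$ to get a reduced component, and spread reducedness over the $\Aut(\c)$-orbit via Remark~\ref{orbit reduce}. The two ingredients you flag as the ``main obstacle'' are exactly the ones you already supplied (and which the paper takes from \cite{P} and Weil's identification of $Z^1$ with the scheme tangent space), so there is no gap.
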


\pf Let $\scr$ be the character of a discrete faithful
representation of $\pi_1(M)$ and let
 $X_1$ be a $\c$-component of $X(M)$  containing
 $\scr$. By a result of Thurston, $dim\;X_1=1$.
 It is also known that $dim\; H^1(\pi_1(M), sl_2(\c)_\r)=1$
(see \cite{P}). Since $\r$ is an irreducible representation,
the $1$-coboundary $B^1(\pi_1(M), sl_2(\c)_\r)$ is $3$-dimensional
and thus the dimension of $Z^1(\pi_1(M), sl_2(\c)_\r)$ is $4$
which is equal to the dimension of the  $\BC$-component $R_1$ of
$R(M)$ which maps onto $X_1$ under the canonical surjective regular map
$\tr: R(M)\ra X(M)$.
That is  we have $dim\;T_\r(\fR(M))=dim\;Z^1(\pi_1(M), sl_2(\c)_\r))=dim\;R_1$ which means
 by Lemma \ref{dim equailty imples reduceness} that $\r$ is a smooth point of the scheme $\fR(M)$. In turn we have $dim\;T_{\scr}(\fX(M))=dim\;H^1(\pi_1(M), sl_2(\c)_\r)=dim\;X_1$ which means
 by Lemma \ref{dim equailty imples reduceness} again that
$\scr$ is a smooth point of the scheme $\fX(M)$, that $\scr$ is contained in a unique irreducible component $\fX_1$
of $\fX(M)$ which is the scheme counterpart of the component $X_1$
and that $\fX_1$ is reduced.
By Remark \ref{orbit reduce}, the $\Aut(\c)$-orbit of $\fX_1$
consists of reduced components.
As $\fX^{\rg}(M)$ consists of such orbits, each of its components
 is reduced.
\qed

If $M$ is the exterior of a knot  in $S^3$, its set of abelian representations form a unique component $R_0$ of $R(M)$ and $dim \;R_0=3$.
 In fact $R_0$ is isomorphic, as a variety,  to $SL_2(\c)$.
 The image $X_0$ of $R_0$ in $X(M)$ under the quotient map $\tr$
 is a component of $X(M)$ and $dim\;X_0=1$.
 The proof of the following proposition is due to Joan Porti.

\begin{prop}\label{abelian comp is reduced}
Let  $M$ be the exterior of a knot  in $S^3$
and let $\fX_0$ be the unique irreducible component of  $\fX(M)$ corresponding to $X_0$.
Then $\fX_0$   is reduced.
\end{prop}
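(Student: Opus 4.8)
The plan is to show that the abelian component $\fX_0$ of the character scheme is reduced by exhibiting a closed point of $\fX_0$ at which the dimension of the Zariski tangent space to the scheme equals $\dim X_0 = 1$, and then invoking Lemma \ref{dim equailty imples reduceness}. Since $\fX_0$ is a single irreducible component, reducedness at one closed point (forcing the isolated primary component to be prime) gives reducedness of the whole component $\fX_0$. Concretely, I would pick a well-chosen abelian representation $\r \in R_0$—for instance a diagonal representation sending the meridian $\m$ to a diagonal matrix $\mathrm{diag}(s, s^{-1})$ with $s$ a generic nonzero complex number (so that $s^2 \ne 1$ and the associated eigenvalue is in ``general position'' relative to the Alexander polynomial)—and compute the tangent space to the character scheme at $\scr$.

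The key computational input is Weil's identification of tangent spaces with group cohomology: $T_\r(\fR(M)) \cong Z^1(\pi_1(M), sl_2(\c)_\r)$, and at an appropriate point the character-scheme tangent space relates to $H^1(\pi_1(M), sl_2(\c)_\r)$. For an abelian (reducible) representation this identification is more delicate than in the irreducible case treated in Proposition \ref{canonical comps are reduced}, because the tangent space to the character \emph{scheme} at a reducible character is no longer simply $H^1$. Here the diagonal representation decomposes the adjoint module $sl_2(\c)_\r$ as a direct sum of three one-dimensional $\pi_1(M)$-modules: the trivial (Cartan) summand $\c_0$ and two ``root'' summands $\c_{\pm}$ on which $\pi_1(M)$ acts through the characters $\g \mapsto s^{\pm 2 \ve_\m(\g)}$, determined by the abelianization $\pi_1(M) \to \z = H_1(M)$. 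Thus I would compute $H^1(\pi_1(M), \c_0) = H^1(M; \c)$, which is one-dimensional since $M$ is a knot exterior, together with $H^1(\pi_1(M), \c_\pm)$, which are governed by the Alexander polynomial of the knot: for generic $s$ (i.e.\ $s^2$ not a root of $\Delta_K$), these root-module cohomologies vanish. The main point is then to verify that, after accounting for the nontrivial behavior of the GIT quotient at reducible loci, the Zariski tangent space of $\fX_0 = \spec \fB_0$ at $\scr$ has dimension exactly $1$.

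The hard part will be the passage from the representation scheme to the \emph{character} scheme at a reducible point, where the naive formula $T_{\scr}(\fX) \cong H^1$ can fail. I would handle this by working directly with the invariant ring $\fB_0 = \fA_0^{SL_2(\c)}$ of the component: since $R_0 \cong SL_2(\c)$ consists of abelian representations all conjugate-equivalent data of which is recorded by the single trace function $\tr \r(\m)$, the component $X_0$ is a line parametrized by $x = \tr\r(\m)$, and one expects $\fB_0 \cong \c[x]$ to be already a polynomial ring, hence reduced. The cleanest route, which I believe is Porti's, is to show that the universal character ring restricted to the abelian component is generated by the single trace coordinate with no nilpotents; equivalently, that the scheme-theoretic fiber structure of $\tr\colon \fR_0 \to \fX_0$ produces no embedded or nonreduced structure on $\fX_0$. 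Verifying this amounts to a local computation of $\fB_0$ near $\scr$ and checking the tangent-space dimension matches $\dim X_0 = 1$; once that equality is established, Lemma \ref{dim equailty imples reduceness} applies and yields that the isolated primary component defining $X_0$ is prime, i.e.\ $\fX_0$ is reduced.
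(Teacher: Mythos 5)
Your overall strategy coincides with the paper's: choose a diagonal representation $\r$ with $\r(\m)=\mathrm{diag}(s,s^{-1})$, where $s^2\ne 1$ and $s^2$ is not a root of the Alexander polynomial, verify $\dim T_{\scr}(\fX(M))=1=\dim X_0$, and conclude by Lemma \ref{dim equailty imples reduceness}. Your cohomological input is also the right one: the decomposition $sl_2(\c)_\r=\c_0\oplus\c_+\oplus\c_-$ and the vanishing of $H^1(\pi_1(M),\c_\pm)$ for generic $s$ is exactly the content of \cite[Lemma 4.8]{HP} that the paper invokes, and it yields $\dim Z^1(\pi_1(M),sl_2(\c)_\r)=1+2=3=\dim R_0$, hence smoothness of $\r$ in $\fR(M)$.

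There is, however, a genuine gap at precisely the step you yourself flag as the hard part: you never supply the mechanism for computing $T_{\scr}(\fX(M))$ at a \emph{reducible} character. The paper's resolution is \cite[Theorem 53 (3)]{Si}, which for a representation that is a smooth point of the representation scheme gives $\dim T_{\scr}(\fX(M))=\dim T_0\bigl(H^1(\pi_1(M), sl_2(\c)_\r)//S_\r\bigr)$; one then observes that the stabilizer $S_\r$ (the diagonal torus) acts trivially because the cohomology is realized by cocycles valued in diagonal matrices, so this dimension is $1$. Your proposed substitute --- ``work directly with the invariant ring and show $\fB_0\cong\c[x]$'' --- is essentially a restatement of the conclusion rather than an argument: the proposition is exactly the assertion that the isolated primary component of the ideal defining $\fB(M)$ over $X_0$ is prime, and this cannot be read off from the (obvious) fact that the \emph{variety} $X_0$ is a line with coordinate ring $\c[x]$. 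Moreover, identifying the scheme component $\fX_0$ of $\spec(\fB(M))$ with $\spec$ of the invariant ring of the corresponding component of $\fA(M)$ is itself a nontrivial claim, since taking invariants need not commute with primary decomposition; so that route would require its own justification. To close the argument you need either Sikora's theorem or an equivalent local description of the GIT quotient at the reducible orbit.
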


\pf Note that the meridian element $\m$ generates
the first homology of $H_1(M;\z)=\z$. Thus
an abelian representation $\r$ of $\pi_1(M)$
is determined by the matrix $\r(\m)$.
Now take a diagonal representation
$\r$ of $\pi_1(M)$ and assume that
$\r(\m)=\left(\begin{array}{cc}
 \sm&0\\0&\sm^{-1}\end{array}\right)$ such that
 $\sm\ne \pm 1$ and $\sm^2$ is not a root of the
 Alexander polynomial of $K$.
 As $dim\; X_0=1$,
 we just need to show, by Lemma \ref{dim equailty imples reduceness}, that for the diagonal representation $\r$ given above,
 we have  $dim\; T_{\chi_\r}(\fX(M))=1$.

It is shown in the proof of \cite[Lemma 4.8]{HP} that
 $H^1(\pi_1(M), sl_2(\c)_\r)=H^1(\pi_1(M),\c_0)$
where $\c_0=\c\left(\begin{array}{cc}
 1&0\\0&-1\end{array}\right)$ is a trivial $\pi_1(M)$-module.
Hence  $dim\; H^1(\pi_1(M), sl_2(\c)_\r)=1$.
For the given diagonal representation $\r$, $B^1(\pi_1(M), sl_2(\c)_\r)$ is two dimensional. Hence we have
 $dim\; Z^1(\pi_1(M), sl_2(\c)_\r)=3$, which
 implies that the representation $\r$ is a smooth point of $\fR(M)$
 since the component $R_0=\tr^{-1}(X_0)$ is of dimension $3$.

Now by \cite[Theorem 53 (3)]{Si}, we have
$$dim\; T_{\chi_\r}(\fX(M))=dim\; T_0(H^1(\pi_1(M), sl_2(\c)_\r)//S_\r)$$
where  $S_\rho$ is, in our current case,  the group of diagonal matrices and it
 acts  on $H^1(\pi_1(M), sl_2(\c)_\r$, in our current case, trivially
(as  the cohomology $H^1(\pi_1(M), sl_2(\c)_\r)=H^1(\pi_1(M),\c_0)$ is realized by cocycles taking values in diagonal matrices).
 Thus $dim\; T_0(H^1(\pi_1(M), sl_2(\c)_\r)//S_\r)=dim\; H^1(\pi_1(M),\c_0)=1$.
\qed

Combining Propositions \ref{canonical comps are reduced}
and \ref{abelian comp is reduced}, we have

\begin{cor}\label{fX is essentially reduced}Let $M$ be the exterior of a hyperbolic knot in $S^3$ such that
$X^\irr(M)=X^{\rg}(M)$. Then $\fX(M)$ is essentially reduced.
\end{cor}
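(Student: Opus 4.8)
The plan is to deduce Corollary \ref{fX is essentially reduced} by combining the two reducedness results established just above it with a clear accounting of which irreducible components the scheme $\fX(M)$ actually has. The key conceptual point is that the hypothesis $X^\irr(M)=X^\rg(M)$ forces the variety $X(M)$ to decompose, as a set, into precisely two pieces: the abelian (reducible) component $X_0=X^\redu(M)$ and the rational-geometric locus $X^\rg(M)$. Indeed, by the definitions in the Introduction, $X(M)=X^\redu(M)\cup X^\irr(M)$ (up to the finitely many reducible characters that $X^\irr(M)$ may absorb), so under our hypothesis every $\BC$-component of $X(M)$ is either $X_0$ or one of the $\BC$-components of $X^\rg(M)$. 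Since $M$ is the exterior of a knot in $S^3$ (a homology sphere), the abelian representations form the single component $X_0$ described before Proposition \ref{abelian comp is reduced}.

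First I would make this component count precise. Recall that by definition $\fX(M)$ is essentially reduced exactly when each of its irreducible components $\spec(R_j)$, corresponding to a $\BC$-component $V_j$ of the variety $X(M)$, is reduced (i.e.\ the defining isolated primary ideal $Q_j$ equals its radical $P_j$). So it suffices to check reducedness component-by-component. By the previous paragraph there are exactly two types of components: the component $\fX_0$ corresponding to $X_0$, and the components making up $\fX^\rg(M)$, which correspond to the $\BC$-components of $X^\rg(M)$.

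Next I would invoke the two propositions directly. Proposition \ref{abelian comp is reduced} states that the abelian component $\fX_0$ is reduced. Proposition \ref{canonical comps are reduced} states that $\fX^\rg(M)$ is essentially reduced, i.e.\ every irreducible component of the scheme $\fX^\rg(M)$ is reduced. Under the hypothesis $X^\irr(M)=X^\rg(M)$, these two families exhaust all the irreducible components of $\fX(M)$. Since every irreducible component of $\fX(M)$ is therefore reduced, the scheme $\fX(M)$ is essentially reduced by definition, which is the assertion to be proved.

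The only genuine subtlety — and the step I would treat most carefully — is the claim that $X_0$ together with the $\BC$-components of $X^\rg(M)$ really account for \emph{all} the components of $X(M)$, and that no additional irreducible component of $\fX(M)$ slips in beyond these. The potential worry is the caveat, noted in the Introduction, that $X^\irr(M)$ (and hence $X^\rg(M)$ under our hypothesis) may contain finitely many reducible characters, so that $X_0$ and the $X^\rg$-components need not be disjoint and one must be sure the decomposition $X(M)=X_0\cup X^\rg(M)$ is a genuine irredundant union of $\BC$-components. I would resolve this by using that $X^\redu(M)=X_0$ is \emph{the} unique component consisting of reducible characters, so that every component other than $X_0$ is a component of $X^\irr(M)=X^\rg(M)$; thus the two lists of components are jointly exhaustive. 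With this the conclusion is immediate, and no further scheme-theoretic computation is needed beyond the two propositions already cited.
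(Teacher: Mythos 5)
Your proposal is correct and follows the same route as the paper, which simply combines Propositions \ref{canonical comps are reduced} and \ref{abelian comp is reduced}: under the hypothesis $X^\irr(M)=X^{\rg}(M)$ the $\BC$-components of $X(M)$ are exactly $X_0$ and the components of $X^{\rg}(M)$, and the two propositions show the corresponding scheme components are all reduced. Your extra care about the component count (using that $X^\redu(M)=X_0$ is the unique component of reducible characters, so the finitely many reducible characters possibly lying on $X^{\rg}(M)$ cause no harm) is exactly the right way to fill in the one step the paper leaves implicit.
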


\section{Proof of  Theorem \ref{AJ} -- a reduction}\label{section-AJ conjecture}

In this section we   briefly review
some background material and  give an outline of the
approach taken in
\cite{LT}, from which we can specify  the issues that we need to deal with in order to extend \cite[Theorem 1]{LT} to our current theorem, that is,
we reduce   Theorem \ref{AJ}
to  Proposition  \ref{fs is reduced}.

\subsection{Recurrence polynomial}
For a knot $K$ in $S^3$, let $J_{\sk, n}(t)\in\BZ[t^{\pm 1}]$ denote the \textit{$n$-colored Jones polynomial} of $K$
with the zero framing, which is the $sl_2$-quantum invariant of the knot colored by the $n$-dimensional representations \cite{RT}. We use the normalization  so that for the unknot $U$,
\[J_{\su,n}(t) = \frac{t^{2n}-t^{-2n}}{t^2-t^{-2}}.\]
By defining $J_{\sk,-n}(t) := -J_{\sk,n}(t)$ and $J_{K,0}=0$,
 one may  treat $J_{\sk,n}(t)$ as
 a discrete function  $$J_{\sk,-}(t): \z \to \z[t^{\pm 1}].$$

 \def\Ct{\BC[t^{\pm 1}]}
 The quantum torus
\[ \T = \c[t^{\pm 1}] \left<\sm^{\pm 1}, \sl^{\pm 1} \right> / (\sl \sm-t^2\sm \sl)\]
acts on   the set of all functions $f: \BZ \to \Ct$   by
\[ \sm f := t^{2n}f , \quad  (\sl f)(t) := f(n+1).\]
Now the set
  $${\mathcal A}_\sk := \{\alpha \in \T \mid \alpha J_{\sk,n}(t) = 0\},$$
  is obviously  a left ideal of $\T$, called the \textit{recurrence ideal} of $K$.
  By  \cite{GaLe} ${\mathcal A}_\sk$
  is not the zero ideal for every knot $K$ in $S^3$.
The ring $\T$ can be extended to a principal left ideal domain $\wT$ by adding inverses of all polynomials in $t$ and $\sm$.
The extended left ideal $\wA_\sk:=\wT {\mathcal A_\sk}$ is then generated by a single  nonzero polynomial in $\wT$, which  can be chosen to be of the form
\[\alpha_\sk(t, \sm,\sl) = \sum_{i=0}^m a_i(t,\sm) \sl^i,\]
with smallest total degrees in $t,\sm,\sl$ and with $a_0(t, \sm),...,a_m(t,\sm) \in \z[t,\sm]$ being coprime in $\z[t,\sm]$. The polynomial $\alpha_\sk(t, \sm,\sl)$ is uniquely determined up to a sign and is called  {\it the recurrence polynomial of $K$}.
When the framing of $K$ is 0, then $J_{K,n}(t) \in t^{2n-2}\BZ[t^{\pm 4}]$ (see eg. \cite{Le:Int}, with our $t$ equal to $q^{1/4}$ there). From here, it is not difficult to show that $\alpha_\sk(t, \sm,\sl)$ has only even powers in $t$ and even powers in $\sm$, i.e.
$a_i(t,\sm) \in \BZ[t^2,\sm^2]$ (see \cite[Proposition 5.6]{Le2}. It follows that
$\alpha_\sk(1,\sm,\sl)= \alpha_\sk(-1,\sm,\sl)$.

Now the AJ-conjecture asserts that for every knot $K$ in $S^3$,
   $\alpha_\sk(\pm 1,\sm,\sl)$
    is equal to the A-polynomial  of $K$, up to a factor of a polynomial in $\sm$, see \cite{G} and also \cite{FGL,Le,LT,Le2}.

\subsection{Kauffman bracket skein module}
For an oriented $3$-manifold $W$, we let $\mathcal S(W)$ denote the Kauffman bracket
skein module of $W$ over $\c[t^{\pm1}]$, which is the quotient
module of the free  $\c[t^{\pm1}]$-module generated by the set of
isotopy classes of framed links in $W$ modulo the well known Kauffman skein relations, see e.g. \cite{PS,Le,LT}.
A fundamental fact  is that when $\mathcal S(W)$ is specialized at $t=-1$ (which we denote by $\fs(W)$, i.e. $\fs(W)=\mathcal S(W)/(t+1)$), it acquires a ring structure and is naturally isomorphic as a ring to the universal character ring of $\pi_1(W)$, i.e.
$$\fs(W)=\fB(W).$$ So $\fs(W)/\sqrt{(0)}$ is isomorphic to the coordinate ring of
$X(W)$ (see \cite{Bu} \cite{PS}).
For the exterior $M$ of a knot $K$ in $S^3$, we shall simply write $\mathcal S$ for $\mathcal S(M)$ and $\fs$ for $\fs(M)$.

If $F$ is an oriented surface, we define $\mathcal S(F):=\mathcal S(F\times [0,1])$.
Then $\mathcal S(F)$ has a natural algebra structure, where the product of two framed links $L_1, L_2$ is obtained by placing $L_1$ atop $L_2$.
For a torus $T^2$, $\mathcal S(T^2)$ can be identified, as an $\c[t^{\pm1}]$-algebra,  with
$$\T^\s:=\{f\in\T; \s(f)=f\}$$ where $\s:\T\ra \T$ is the involution defined by $\s(\sm)=\sm^{-1}$
and $\s(\sl)=\sl^{-1}$ (see \cite{FG}).

If $M$ is the exterior of knot $K$ in $S^3$,
there is a natural map
\begin{equation}\label{eq:Theta map}
\Theta: \mathcal S(\p M)=\T^\s\ra \mathcal S=\mathcal S(M)
\end{equation}
induced by the inclusion $\p M\hookrightarrow M$.
Then  $\P:=ker (\Theta)$ is called the {\it quantum peripheral ideal of $K$} and by \cite{FGL} and \cite{G2}, $\P\subset \A_\sk$ (see also \cite[Corollary 1.2]{LT}).

\subsection{Dual contruction of $A$-polynomial}
On the other hand, there is a dual construction of the $A$-polynomial of a knot $K$ in $S^3$.
Let $\ft:=\c[\sm^{\pm1},\sl^{\pm1}]$, which is the function ring of $\BCxt$,
and let $\ft^\s:=\{f\in \ft; \s(f)=f\}$, which is the function ring of $X(\p M)$.
%,where $\s:\ft\ra\ft$ is the involution defined by $\s(\sm)=\sm^{-1}, \s(\sl)=\sl^{-1}$.
The restriction map $\iota^*: X(M)\ra X(\p M)$ induces
a ring homomorphism between coordinate rings
\begin{equation}\label{eq:theta map}\theta: \c[X(\p M)]=\ft^\s\ra \c[X(M)].
\end{equation}
Let $\fp:=ker (\theta)$, which is called {\it the classical peripheral ideal of the knot $K$}.
Now extend $\ft$ naturally to the principal ideal domain $\tilde\ft:=\c(\sm)[\sl^{\pm1}]$
where $\c(\sm)$ is the fractional field of $\c[\sm]$.
Then the extended ideal $\tilde \fp:=\tilde \ft \fp$
of $\fp$ in $\tilde \ft$ is generated by a single polynomial which can be
normalized to be of the form
\[B_\sk(\sm,\sl) = \sum_{i=0}^m b_i(\sm) \sl^i,\]
with smallest total degree and with $b_0(\sm),...,b_m(\sm) \in \z[\sm]$ being coprime in $\z[\sm]$.
So  $B_\sk(\sm,\sl)$  is uniquely defined up to a sign.
The polynomial $B_\sk(\sm,\sl)$ is  called {\it the $B$-polynomial of $K$}
and is equal to the $A$-polynomial $A_\sk(\sm,\sl)$ divided by its
$\sm$-factor (see \cite[Corollary 2.3]{LT}).

Note that the universal character ring of $\p M$ is reduced,
so we have $\fs(\p M)=\c[\p M]=\ft^\s$. Specializing (\ref{eq:Theta map}) at $t=-1$, we get
\begin{equation}\label{eq:theta map again}
\theta: \ft^\s\ra \fs=\fs(M)
\end{equation}
in which the map $\theta$ is the same one given in (\ref{eq:theta map}).

\subsection{Localized skein module and reduction of Theorem \ref{AJ}}
Note that the inclusion map $\p M\subset M$ also
induces a left ${\mathcal S}(\p M)=\T^\s$-module structure on ${\mathcal S}={\mathcal S}(M)$.
Let $D:=\c[t^{\pm1},\sm^{\pm1}]$,
 $D^\s:=\{f\in D; \s(f)=f\}$ where $\s$ is the involution defined by
  $\s(\sm)=\sm^{-1}$ and $\overline D$ the localization
of $D$ at $(1+t)$, i.e.
$$\overline D:=\{f/g; f, g\in D, g\notin (1+t)D\}.$$
Then we may consider ${\mathcal S}$, as well as $\T^\s$,  as
a left $D^\s$-modules as $D^\s$ is contained in $\T^\s$.
Now let
$$(\overline \T\stackrel{\overline \Theta}{\lra} \overline{\mathcal S}):=(\T^\s\stackrel{\Theta}{\lra} \mathcal S)\otimes_{D^\s} \overline D, \quad
(\overline \ft\stackrel{\overline \theta}{\lra} \overline{\fs}):=(\ft^\s\stackrel{\theta}{\lra} \fs)
\otimes_{\c[\sm^{\pm1}]^\s}\c(\sm).$$
We shall consider $\overline{\mathcal S}$ as a left  $\overline D$-module
and call it {\it the localized skein module of $M$}.
The following commutative diagram:
$$\begin{diagram}[height=2em,w=3em]
\overline \T&\rTo^{\overline \Theta}&\overline{\mathcal S}\\
\dTo^{\e}&&\dTo_{\e}\\
 \overline\ft&\rTo^{\overline\theta}& \overline\fs
\end{diagram}$$
is obtained in \cite{LT} as Lemma 3.2, where the vertical maps 
are the natural projections $\mathcal M\ra \mathcal M/(t + 1)$, for
$\mathcal M =\overline\T$ and $\mathcal M= \overline{\mathcal S}$.
We claim  that the proof of Theorem \ref{AJ}
can be reduced to the proof of the following proposition:

\begin{prop}\label{fs is reduced}Let $M$ be the exterior of a hyperbolic knot in $S^3$. If
$X^{\rg}(M)=X^\irr(M)$ (or equivalently, $\oX^{\rg}(M)=\oX^\irr(M)$), and the two discrete faithful characters of $\overline X(M)$ lie in the same component of
$\overline X(M)$, then
\newline
(1) the ring $\overline\fs$ is  reduced, and
\newline
(2) the map $\bar\theta$ is surjective.
\end{prop}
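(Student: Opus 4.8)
The plan is to handle the two assertions separately, drawing on essential reducedness (Corollary \ref{fX is essentially reduced}) for part (1) and on the birationality and distinctness in Theorem \ref{distinct3} for part (2). Throughout write $z=\sm+\sm^{-1}$, so that $\c[\sm^{\pm1}]^\s=\c[z]$ and passing from $\fs$ to $\overline\fs=\fs\otimes_{\c[z]}\c(\sm)$ amounts to first inverting the multiplicative set $S=\c[z]\setminus\{0\}$ and then extending scalars along the degree-two separable extension $\c(\sm)/\c(z)$. Under the identification $\fs=\fB(M)$ the image of $z$ is the meridian trace function, and one computes $\overline\ft=\ft^\s\otimes_{\c[z]}\c(\sm)\cong\c(\sm)[\sl^{\pm1}]$, a principal ideal domain (once $\sm-\sm^{-1}$ is invertible one solves for $\sl$ from the symmetric generators of $\ft^\s$).

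For part (1), I would first invoke Corollary \ref{fX is essentially reduced}: under the hypotheses $\fX(M)=\spec\fB(M)$ is essentially reduced, so in a primary decomposition of the defining ideal the isolated components are prime and nilpotents can arise only from embedded components. Since $X^\irr(M)=X^\rg(M)$, every $\c$-component of $X(M)$ is a curve (the abelian component $X_0$ together with the curves comprising $X^\rg(M)$), so each embedded prime is supported on a point, on which $z$ is constant, say $z=c$; then $z-c$ lies in that embedded prime and in $S$, so the corresponding primary component becomes the unit ideal after inverting $S$. Conversely $z$ is non-constant on each isolated curve (it is the coordinate on $X_0$, and on the $X^\rg$-curves this is the Culler--Shalen non-constancy of $f_\m=z^2-4$ already used in the proof of Theorem \ref{degree from trace field}), so each isolated prime avoids $S$ and survives. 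Hence $S^{-1}\fs$ is an intersection of primes, i.e. reduced, and since $\c(\sm)/\c(z)$ is separable in characteristic zero the base change $\overline\fs=(S^{-1}\fs)\otimes_{\c(z)}\c(\sm)$ stays reduced.

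For part (2), observe that each surviving curve is generically finite over the $z$-line via $z$, so $S^{-1}\fs$ is a reduced Artinian $\c(z)$-algebra, hence a finite product of fields: $S^{-1}\fs\cong\prod_i\c(X_i)$, with $X_0$ contributing $\c(z)$. Thus $\overline\fs\cong\prod_i\bigl(\c(X_i)\otimes_{\c(z)}\c(\sm)\bigr)$ is again a finite product of fields, while $\overline\ft\cong\c(\sm)[\sl^{\pm1}]$ is a principal ideal domain. Surjectivity of $\bar\theta$ onto a product of fields is equivalent to surjectivity onto each field factor together with pairwise comaximality of the kernels. Surjectivity onto the factor attached to a component $X_i$ follows from Theorem \ref{distinct3}(2): the birational isomorphism $\iota^*\colon X_i\to Y_i$ gives $\c(X_i)=\c(Y_i)$, and $\c(Y_i)$ is generated over $\c(z)$ by restrictions of the coordinate functions of $X(\p M)$, so after adjoining $\sm$ the image of $\overline\ft$ fills the factor. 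Comaximality follows from Theorem \ref{distinct3}(3): the curves $Y_i$ are mutually distinct, so the kernels are distinct height-one primes of the PID $\overline\ft$ (distinct balanced-irreducible factors of the $A$-polynomial, with the two branches of a single $Y_i$ separated by the $\s$-symmetry), hence pairwise comaximal. The Chinese Remainder Theorem then yields surjectivity of $\bar\theta$.

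The main obstacle is part (2): identifying $\overline\fs$ explicitly as a finite product of fields and matching those field factors with the primes of $\overline\ft$. The birational isomorphism of Theorem \ref{distinct3}(2) is exactly what upgrades each component-level restriction to an isomorphism of function fields, so that $\bar\theta$ is onto each factor; and the distinctness of the $Y_i$ from Theorem \ref{distinct3}(3) is precisely the comaximality input the Chinese Remainder Theorem needs — without it two field factors of $\overline\fs$ could share a kernel and $\bar\theta$ would fail to be onto. Part (1), by contrast, is a localization argument once essential reducedness is available, the only points requiring care being that all components are curves (so embedded primes are points) and that characteristic zero renders the final scalar extension harmless.
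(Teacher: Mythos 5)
Your proposal is correct and follows essentially the same route as the paper: part (1) via Corollary \ref{fX is essentially reduced} plus localization at the meridian-trace line to kill the embedded (point-supported) primary components, and part (2) via the decomposition of the localized ring into function fields of the components, with Theorem \ref{distinct3}(2) giving surjectivity onto each factor and \ref{distinct3}(3) giving the distinctness needed to assemble them. Your CRT/comaximality phrasing of the last step is just another way of saying what the paper expresses as surjectivity of restriction to the subvariety $Y_0\cup\cdots\cup Y_k$ after localization.
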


Assuming Proposition \ref{fs is reduced}, we may
finish the proof of Theorem \ref{AJ} as follows.
By Condition (1) of Theorem \ref{AJ}, we have
Proposition \ref{fs is reduced}.
Combining Proposition \ref{fs is reduced} with
Condition (3) of Theorem \ref{AJ}, we may apply
 \cite[Corollary 3.6]{LT} to have $$\a_\sk(-1,\sm,\sl)|B_\sk(\sm,\sl) \in \BZ[\sm^2,\sl].$$
Condition (1) of Theorem \ref{AJ}
and Proposition \ref{bal-irred}
together imply
 $A_\sk(\sm,\sl)=(\sl-1)\widehat A_\sk(\sm,\sl)=B_\sk(\sm,\sl)$
 and $\widehat A_\sk(\sm,\sl)$ are balanced-irreducible in $ \BZ[\sm^2,\sl]$.
It's known that $L-1$ is a factor $\a_\sk(-1,\sm,\sl)$ (\cite[Proposition 2.3]{Le}).
By Condition (2) of Theorem \ref{AJ} and
 \cite[Lemma 3.9]{LT}
  we know that the $\sl$-degree of $\a_\sk(-1,\sm,\sl)$  is greater than or equal to 2.
  As $\a_\sk(-1,\sm,\sl)$ is also balanced (see the lemma below) and its coefficients are all integers, the polynomial
  $\widehat\a_\sk(-1,\sm,\sl):=\a_\sk(-1,\sm,\sl)/(\sl-1)$
  is also balanced, belongs to $\z[\sm^2,\sl]$, and has $\sl$-degree $\ge 1$.
 Since $\widehat\a_\sk(-1,\sm,\sl)$ divides $\widehat A_\sk(\sm,\sl)$ which is balanced-irreducible in $ \BZ[\sm^2,\sl]$, the two polynomials must be equal (up to sign).
 Hence $\a_\sk(-1,\sm,\sl)=A_\sk(\sm,\sl)$ (up to sign).

\begin{lemma}Let $\a_\sk(t,\sm,\sl)$ be the normalized
recurrence polynomial of a knot $K$ in $S^3$.
Then $\a_\sk(-1,\sm,\sl)$ is a balanced polynomial.
\end{lemma}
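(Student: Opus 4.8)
The plan is to push the symmetry $J_{\sk,-n}(t)=-J_{\sk,n}(t)$, which is built into the colored Jones function on $\z$, all the way through to the generator $\a_\sk$ of $\wA_\sk$, and then to read off the balanced condition after setting $t=-1$. Recall that balancedness of $P:=\a_\sk(-1,\sm,\sl)$ means $\s(P)=\d\,\sm^a\sl^b P$ for some $\d\in\{-1,1\}$ and $a,b\in\z$, where $\s$ is the involution $\sm\mapsto\sm^{-1},\ \sl\mapsto\sl^{-1}$.

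First I would show that the recurrence ideal $\A_\sk$ is stable under the involution $\s$ of $\T$ fixing $t$ and sending $\sm\mapsto\sm^{-1}$, $\sl\mapsto\sl^{-1}$; one checks that this is a well-defined algebra automorphism because the defining relation $\sl\sm=t^2\sm\sl$ is preserved. Writing a general operator as $\alpha=\sum_{i,j}c_{ij}(t)\,\sm^i\sl^j$ and using the action $(\sm^i\sl^j f)(n)=t^{2ni}f(n+j)$ on a function $f:\z\to\Ct$, a direct computation gives, for $\tilde f(n):=f(-n)$, the key identity $(\s(\alpha)\tilde f)(n)=(\alpha f)(-n)$. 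Applying this to $f=J_{\sk,-}$, for which $\tilde f=-f$ by the definition $J_{\sk,-n}=-J_{\sk,n}$, shows that $\alpha J_{\sk,-}=0$ forces $\s(\alpha)J_{\sk,-}=0$; hence $\s(\A_\sk)=\A_\sk$.

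Next, $\s$ extends to $\wT$ (the $\s$-image of an inverted polynomial in $t,\sm$ is again invertible there), and $\wT$ is a skew-Laurent ring $F[\sl^{\pm1};\tau]$ over the field $F=\c(t,\sm)$ with $\tau(\sm)=t^2\sm$, whose units are exactly $c\,\sl^k$ with $c\in F^\times$, $k\in\z$. Since $\wA_\sk=\wT\a_\sk$ is a principal left ideal and is $\s$-stable, $\s(\a_\sk)$ is another generator, so $\s(\a_\sk)=c\,\sl^k\,\a_\sk$. Writing $\a_\sk=\sum_{i=0}^m a_i(t,\sm)\sl^i$ and comparing top and bottom $\sl$-degrees forces $k=-m$; matching coefficients of each $\sl^{-i}$ then yields the system $a_i(t,\sm^{-1})=c\,a_{m-i}(t,t^{-2m}\sm)$ for $0\le i\le m$.

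The crux, and the step I expect to be the main obstacle, is to show that $c$ is a monomial and behaves well at $t=-1$. Writing $c=P/Q$ in lowest terms in the PID $\c(t)[\sm^{\pm1}]$, the coprimality of $a_0,\dots,a_m$ in $\z[t,\sm]$ (hence, since $t$-only factors become units, in $\c(t)[\sm]$) forces $Q$ to divide the $\sm$-monomial unit $\gcd_i a_{m-i}(t,t^{-2m}\sm)$ and $P$ to divide the unit $\gcd_i a_i(t,\sm^{-1})$; thus $P,Q$ are units and $c=\g(t)\sm^a$. The same coprimality shows that $(t+1)$ divides no $a_i$, so neither the numerator nor denominator of $\g$ vanishes at $t=-1$ and $\g(-1)\in\c^\times$ is defined. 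Finally, since $a_i\in\z[t^2,\sm^2]$ gives $t^{-2m}\mapsto1$ and $a_i(-1,\sm)=a_i(1,\sm)$, I would specialize the coefficient system at $t=-1$ and reassemble to obtain $\s\big(\a_\sk(-1,\sm,\sl)\big)=\g(-1)\,\sm^a\sl^{-m}\,\a_\sk(-1,\sm,\sl)$. Applying $\s$ twice, and using $\s^2=\mathrm{id}$, gives $\g(-1)^2=1$, so $\d:=\g(-1)\in\{-1,1\}$; this is precisely the balanced condition with exponents $a$ and $-m$, completing the proof.
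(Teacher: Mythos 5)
Your overall route is the same as the paper's: establish that the recurrence ideal is $\s$-stable, deduce that $\s(\a_\sk)$ and $\a_\sk$ generate the same principal left ideal of $\wT$ so that they differ by a unit, pin the unit down to a monomial using the coprimality normalization, and specialize at $t=-1$. (The paper simply cites \cite[Theorem 1.4]{G2} for the $\s$-stability, which you re-derive from $J_{\sk,-n}=-J_{\sk,n}$ via the identity $(\s(\alpha)\tilde f)(n)=(\alpha f)(-n)$ --- that computation is correct --- and the paper replaces your explicit analysis of the unit by an appeal to the uniqueness of the normalized generator of $\wA_\sk$.)

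There is, however, one incorrect justification. You assert that ``the same coprimality shows that $(t+1)$ divides no $a_i$.'' Coprimality of $a_0,\dots,a_m$ in $\z[t,\sm]$ only rules out a \emph{common} factor; it is perfectly consistent with some individual $a_i$ being divisible by $t+1$. So as written you have not shown that $\g$ has neither a zero nor a pole at $t=-1$, and without that the specialization of the coefficient identities at $t=-1$ is not yet licensed. The gap is easy to close with tools you already use: apply $\s^2=\mathrm{id}$ to the identity $\s(\a_\sk)=\g(t)\sm^a\sl^{-m}\a_\sk$ \emph{before} specializing. Since $\sl^{m}\sm^{a}=t^{2ma}\sm^{a}\sl^{m}$, this gives $\a_\sk=\g(t)^2\,t^{2ma}\,\a_\sk$, hence $\g(t)=\pm t^{-ma}$ is a monomial in $t$ and $\g(-1)=\pm1$ automatically. (Alternatively: coprimality does give a single index $i_0$ with $a_{i_0}(-1,\sm)\neq0$; the two instances $i=i_0$ and $i=m-i_0$ of your coefficient identity then show $\g$ has no zero and no pole at $t=-1$.) With that repair your argument is complete.
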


\pf
By \cite[Theorem 1.4]{G2},
the recurrence (left)  ideal $\A_\sk$ of $K$
 is invariant under the involution
 $\s$ of the quantum torus $\T$  defined by
 $\s(\sm)=\sm^{-1}, \s(\sl)=\sl^{-1}$.
  Hence $\s(\a_\sk(t,\sm,\sl))=\a_\sk(t,\sm^{-1},\sl^{-1})$ is contained in $\A_\sk$.
  Suppose the $\sl$-degree of $\a_\sk(t,\sm,\sl)$ is $m$.
  Then using the relation $\sl\sm=t^2\sm\sl$,
  one can easily see that there is a monomial
  $t^{2a}\sm^b\sl^m$, for some integers $a, b$, with $b\geq 0$, such that
  $t^{2a}\sm^b\sl^m\a_\sk(t,\sm^{-1},\sl^{-1})$ is contained in
  $\z[t,\sm,\sl]$ of $\sl$-degree $m$ with relatively
  prime coefficients with respect to the variable $\sl$.
  It follows that $t^{2a}\sm^b\sl^m\a_\sk(t,\sm^{-1},\sl^{-1})$
  is also a generator of $\tilde \A_\sk$ and by the unique
  normalized form of such generator, we have
  $$t^{2a}\sm^b\sl^m\a_\sk(t,\sm^{-1},\sl^{-1})=\a_\sk(t,\sm,\sl)$$
  up to  sign. Hence $\sm^b\sl^m\a_\sk(-1,\sm^{-1},\sl^{-1})=\a_\sk(-1,\sm,\sl)$
  up to sign, i.e. $\a_\sk(-1,\sm,\sl)$ is balanced.\qed

\section{Proof of Proposition \ref{fs is reduced}}

Under the assumptions of Proposition \ref{fs is reduced},
we  know, by Corollary \ref{fX is essentially reduced}, that the character scheme $\fX(M)$ is essentially reduced, i.e.
the universal character ring $\fB(M)$ is essentially reduced.
We may assume that $$\fB(M)=\c[x_1,...,x_n]/I$$
where $I$ is an ideal $\c[x_1,...,x_n]$.
We may also assume that the ideal $I$ has an irredundant primary decomposition
$$I=\bigcap_{j=0}^{m}Q_j$$
such that $Q_0,Q_1,...,Q_k$ are  the isolated components of $I$,
$Q_{k+1},...,Q_m$ embedded components,
with $Q_0$ defining  the abelian component $X_0$ of $X(M)$, $Q_1,...,Q_k$ defining
the components $X_1,...,X_k$ of $X^{\rg}(M)$ respectively.
We have that $Q_0,Q_1,...,Q_k$ are prime ideals.
As $X_0,X_1,...,X_k$ are all $1$-dimensional,
the zero locus of each $Q_j, j=k+1,...,m$,
is a point, and thus $\sqrt{Q_j}=P_j$ is a maximal ideal, i.e. for some point $(a_1,...,a_n)$ in $X_0\cup X_1\cup...\cup X_k$, $P_j=(x_1-a_1,...,x_n-a_n)$.

Let $R_n=\c[x_1,...,x_n]$. Then $\fB(M)=R_n/I$.
We may assume that the coordinate $x=x_1$ in
$\fB(M)=\c[x_1,...,x_n]/I$
represents the function
$x:\fX(M)\ra \c$ given by
$x(\chi_\r)=trace(\r(\m))$ where $\m$ is a meridian of $\pi_1(M)$.
Let $S = \c[x] \setminus \{0\}$, which is a multiplicative subset of $\c[x]$. If $\mathcal M$ is a $\c[x]$-module,
let $S^{-1}\mathcal M$ denote the  localization of $\mathcal M$ with respect to  $S$.
Note that every ideal $J$ in $R_n$ is a $\c[x]$-module and so is $R_n/J$.

\begin{lemma}
 For $j> k$ we have
$$  S^{-1}Q_j=S^{-1}R_n.$$
\end{lemma}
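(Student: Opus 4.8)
The plan is to reduce the statement to the elementary localization fact that, for a multiplicative subset $S$ of a commutative ring and an ideal $J$, one has $S^{-1}J = S^{-1}R_n$ if and only if $J \cap S \neq \emptyset$: if $s \in J \cap S$ then $1 = (1/s)(s/1) \in S^{-1}J$, forcing $S^{-1}J = S^{-1}R_n$. Since $S \subset \c[x] \subset R_n$ is a multiplicative subset of $R_n$ and localization is exact, the $\c[x]$-module localization $S^{-1}Q_j$ coincides with the extension of $Q_j$ inside $S^{-1}R_n$, so this fact applies directly. Thus it suffices to produce, for each $j > k$, a nonzero element of $\c[x]$ lying in $Q_j$.

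To do this I would use the structure of the embedded components recorded just above the lemma. For $j > k$ the zero locus of $Q_j$ is a single point $(a_1, \ldots, a_n)$, so its radical is the maximal ideal $P_j = (x_1 - a_1, \ldots, x_n - a_n)$; in particular $x_1 - a_1 \in P_j = \sqrt{Q_j}$. As $Q_j$ is primary with radical $P_j$, some positive power $(x_1 - a_1)^N$ lies in $Q_j$. Recalling $x = x_1$, the element $(x - a_1)^N$ is a nonzero polynomial in the single variable $x$, hence lies in $S = \c[x] \setminus \{0\}$. Therefore $(x - a_1)^N \in Q_j \cap S$, and the localization fact above yields $S^{-1}Q_j = S^{-1}R_n$.

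There is essentially no obstacle here. The only substantive input is the observation, already established in the setup, that the embedded components are zero-dimensional, so that each $P_j = \sqrt{Q_j}$ is a maximal ideal and in particular contains the nonconstant polynomial $x - a_1$ in the meridian-trace variable. Everything else is routine commutative algebra, and the point of the lemma is precisely that, after inverting all nonzero polynomials in $x$, the lower-dimensional embedded pieces of $I$ become invisible.
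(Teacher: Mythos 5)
Your proof is correct and follows essentially the same route as the paper: both arguments observe that for $j>k$ the radical $P_j=\sqrt{Q_j}$ is the maximal ideal $(x_1-a_1,\dots,x_n-a_n)$, so some power $(x-a_1)^d$ lies in $Q_j\cap S$, which forces $S^{-1}Q_j=S^{-1}R_n$. No issues.
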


\begin{proof}
For $j>k$, $P_j=(x_1-a_1,\dots, x_n - a_n)$
 is a maximal ideal.
Since $Q_j$ is primary and  $\sqrt {Q_j}=P_j$, we have $P_j^d  \subset Q_j$ for some integer $d>0$. It follows that $(x-a_1)^d \in Q_j$. Since
$(x-a_1)^d \in S$, we have $1 \in S^{-1} Q_j$. Hence $S^{-1} Q_j = S^{-1} R_n$.
\end{proof}

Hence $S^{-1}I=\bigcap_{j=0}^mS^{-1}Q_j
=\bigcap_{j=0}^kS^{-1}Q_j$.
As $Q_j$, $j<k$, are prime, $S^{-1}I=
\bigcap_{j=0}^kS^{-1}Q_j$ is a prime decomposition of
the ideal $S^{-1}I$ in $S^{-1}R_n$. Therefore
$S^{-1}(R_n/I)=S^{-1}R_n/S^{-1}I$ is a reduced ring.
By definition,
$$S^{-1}(R_n/I)=\fB(M)\otimes_{\c[x]}\c(x)=\fs\otimes_{\c[x]}\c(x)
=\fs\otimes_{\c[\sm+\sm^{-1}]}\c(\sm+\sm^{-1}).$$
Taking tensor product of this with $\c(\sm)$, we have
 $$(\fs\otimes_{\c[x]}\c(x))\otimes_{\c(x)}\c(\sm)
=\fs\otimes_{\c[\sm^{\pm 1}]^\s}\c(\sm)=\bar\fs$$
which is still reduced.
This proves part (1) of  Proposition \ref{fs is reduced}.

From the above proof, we also get
$$\fs\otimes_{\c[x]}\c(x)=\c[X(M)]\otimes_{\c[x]}\c(x)$$
because $\c[X(M)]=R_n/I'$ with $I'=\bigcap_{j=0}^kQ_j$
and $S^{-1}I=S^{-1}I'$.
 The restriction of the function $x$  on $X_1$
 is nonconstant and thus is non-constant
on $X_j$  for each $j=1,...,k$.
 It is easy to see  that $x$ is also non-constant on $X_0$.
 Hence  a similar proof as that of \cite[Lemma  3.8]{LT}
 shows that
$$\c[X(M)]\otimes_{\c[x]}\c(x)
=\bigoplus_{j=0}^k \c[X_j]\otimes_{\c[x]}\c(x).$$
Note that $\c[X_j]\otimes_{\c[x]}\c(x)$ is isomorphic to the
field of rational functions on $X_j$ for each $j=0,1...,k$,
(by \cite[Lemma 3.7]{LT}).

Recall that $\iota^*: X(M)\to X(\partial M)$ is the restriction map which induces the ring homomorphism
$\theta: \BC[X(\partial M)] \to \BC[X(M)]$.
Also recall that $Y_j$  is the Zariski closure of $\iota^*(X_j)$ in $X(\p M)$, $j=0,1,...,k$.
As $x$ is non-constant on each $Y_j$, $\c[Y_j]\otimes_{\c[x]}\c(x)$ is isomorphic to the
field of rational functions on $Y_j$ for each $j=0,1...,k$.
By Theorem \ref{distinct3}, $\iota^*:X_j\ra Y_j$ is
a birational isomorphism for each $j=1,...,k$.
When $j=0$, $\iota^*:X_0\ra Y_0$ is also a birational isomorphism,
which is an elementary fact.
 Hence the map $\iota^*$  induces an isomorphism
$$\c[Y_j]\otimes_{\c[x]}\c(x)\ra \c[X_j]\otimes_{\c[x]}\c(x)$$
for each $j=0,1,...,k$.
As $Y_j$, $j=0,1,...,k$, are distinct curves in $X(\p M)$ by Theorem \ref{distinct3},
$\iota^*$ induces the isomorphism
$$\bigoplus_{j=0}^{k}\c[Y_j]\otimes_{\c[x]}\c(x)\ra \bigoplus_{j=0}^{k}\c[X_j]\otimes_{\c[x]}\c(x)=\c[X(M)]\otimes_{\c[x]}\c(x)$$
which implies that the map
$$\c[X(\p M)]\otimes_{\c[x]}\c(x)\ra \c[X(M)]\otimes_{\c[x]}\c(x)$$
induced by $\iota^*$ is surjective since $Y_0\cup Y_1\cup\cdots\cup Y_k$ is a subvariety of
$X(\p M)$.
Taking tensor product of this map with
$\c(\sm)$ over $\c(x)$ and noting that $\c[X(\p M)]=\ft^\s$
and $\c[X(M)]\otimes_{\c[x]}\c(x)=\fs\otimes_{\c[x]}\c(x)$,
we get the map
\begin{equation}\label{eq:surjective}
\ft^\s\otimes_{\c[\sm^{\pm1}]^\s}\c(\sm)\ra \fs\otimes_{\c[\sm^{\pm1}]^\s}\c(\sm)\end{equation}
which is still surjective.
Now  one can check that (\ref{eq:surjective}) is
precisely the map
$$\overline\theta:\overline\ft\ra\overline\fs.$$
Part (2) of  Proposition \ref{fs is reduced} is proved.

\section{Proof of Theorem \ref{2-bridge}}

Let $M$ be the exterior of a hyperbolic $2$-bridge knot in $S^3$.
We call a character  $\bar\chi_{\bar\r}\in \overline X(M)$ (resp.  $\chi_\r\in X(M)$) {\it dihedral}
if it is the character of a dihedral representation
 i.e. a representation whose image is
 a dihedral group (resp. a binary dihedral group).
It was shown in \cite[Section 5.3]{T}   (see also \cite[Appendix A]{BB}) that
any dihedral character of $\overline X(M)$ (and of $X(M)$)  is a smooth point
and thus is contained in a unique $\c$-component of $\overline X(M)$ (resp. $X(M)$).
It was also shown in \cite[Section 5.3]{T} that every
$\c$-component of $\overline X^\irr(M)$ contains a dihedral character.

Since  a dihedral character of $\overline X^\irr(M)$ is  real valued, it is  a fixed point of the $\t$-action (the complex conjugation action given in Subsection \ref{sec.prelim}). It follows that every $\c$-component of $\overline X^\irr(M)$ is invariant under the $\t$-action.
Hence in particular  the two discrete faithful characters of $\overline X(M)$ are contained in the same $\c$-component of $\overline X(M)$.

By \cite[Lemma 5.5 (3)]{BZ3}, any dihedral character in $X(M)$ is a fixed point of the $\e$-action (recall its definition in Subsection \ref{e-action}).
It follows that every $\c$-component of $X^\irr(M)$ is invariant under the $\e$-action, as well as the $\t$-action, which implies
that all the four discrete faithful characters of $X(M)$ are contained in the same $\c$-component of $X(M)$, say $X_1$. Therefore
$X^{\rg}(M)$ is the $\Aut(\c)$-orbit of $X_1$ and thus is  $\q$-irreducible.

%%%%%%%%%%%%%%%%%%%%%%%%%%%%%%%%%%%%%%%%%%%
\def\bysame{$\underline{\hskip.5truein}$}
%%%%%%%%%%%%%%%%%%%%%%%%%%%%%%%%%%%%%%%%%%%

\end{document}